\documentclass[11pt]{amsart}

\usepackage[T1]{fontenc}
\usepackage[utf8]{inputenc}
\usepackage{lmodern}
\usepackage[margin=1.05in]{geometry}
\usepackage{microtype}
\usepackage{amsmath,amssymb,amsthm,mathtools}
\usepackage{booktabs}
\usepackage{xcolor}
\usepackage[hidelinks]{hyperref}
\usepackage[nameinlink,noabbrev]{cleveref}

\numberwithin{equation}{section}
\newtheorem{theorem}{Theorem}[section]
\newtheorem{proposition}[theorem]{Proposition}
\newtheorem{corollary}[theorem]{Corollary}

\theoremstyle{definition}
\newtheorem{definition}[theorem]{Definition}

\theoremstyle{remark}
\newtheorem{remark}[theorem]{Remark}
\newtheorem{example}[theorem]{Example}

\newcommand{\norm}[1]{\lVert#1\rVert}
\newcommand{\lamin}{\lambda_{\min}}

\newcommand{\R}{\mathbb R}
\newcommand{\Sph}{\mathbb S}

\newcommand{\cB}{\mathcal B}
\newcommand{\cH}{\mathcal H}
\newcommand{\cG}{\mathcal G}

\newcommand{\lmin}{\lambda_{\min}}
\newcommand{\lmax}{\lambda_{\max}}

\begin{document}
\title[cubic and quartic landscapes]{Benign cubic and quartic forms on  the Sphere}
\thanks{This research is supported by the AI Interdisciplinary Institute ANITI funding through the ANITI AI
Cluster program under the Grant agreement ANR-23-IACL-0002, and also by the Marie-Sklodovska-Curie European
doctoral network TENORS, grant 101120296. This research is also part of the programme DesCartes and is supported by
the National Research Foundation, Prime Minister's Office, Singapore under its Campus for Research Excellence and
Technological Enterprise (CREATE) programme.}
\author{Jean B. Lasserre}
\address{LAAS-CNRS and Toulouse School of Economics (TSE)\\
University of Toulouse\\
LAAS, 7 avenue du Colonel Roche, BP 54200\\
31031 Toulouse Cedex 4, France}
\email{lasserre@laas.fr}

\date{}

\begin{abstract}
One considers the NP-hard problem of minimizing real cubic and quartic forms on the Euclidean unit
sphere. One first extends the Hessian spectral characterization of local minima  by the author. In particular one provides a gap--depth characterization of negative global and spurious minima,
and proves that the minimum value
of an odd degree-$d$ form is recovered (up to the factor $d(d-1)$) 
by minimizing the smallest eigenvalue of its ambient Hessian 
over the sphere (and equivalently the unit ball).
Next, one exhibits several classes of \emph{benign} families of cubic and quartic forms
(i.e. with no spurious local minima on the sphere)
and prove that some are robust to small coefficient perturbation
(i.e. the resulting form remains benign even if it leaves the initial family).
Therefore the class of benign cubic and quartic forms is full-dimensional
(in the ambient space of coefficients), for $n\geq2$.
Also for every odd degree $d$,
the class of benign degree-$d$ forms increases with the dimension (however it does not
imply that such forms become more frequent when the dimension increases).\\
{\bf MSC: 90C26, 90C23, 15A69}\\
{\bf Keywords: homogeneous polynomial, cubic form, quartic form, unit sphere, spurious local minimum}
\end{abstract}
\maketitle

\section{Introduction}

Minimizing a cubic or quartic form on the Euclidean sphere 
$\mathbb{S}^{n-1}$
($n\geq2$) is one of the simplest optimization problem (to describe). Yet it  models
several well-known NP-hard problems and therefore it is worth investigating its 
characteristics. In particular, an important  topic is to
characterize the class of \emph{benign} cubic and/or quartic forms
(i.e. with no \emph{spurious minima} on $\mathbb{S}^{n-1}$), and if possible provide explicit
families in that 
class. Indeed if a form is identified as benign then any local minimization algorithm
that converges to a local minimum will obtain the highly desired global minimum (and a 
global minimizer as well) with no need to resort to a more sophisticated (and computationnally
more expensive) algorithm of global optimization.

More precisely, for $d\ge1$, let $\cH_{n,d}\subset\R[x]$ be
be the space of degree-$d$ forms, hence  with $\dim\cH_{n,d}=\binom{n+d-1}{d}$.
In particular,
cubics and quartics belong to $\cH_{n,3}$ and $\cH_{n,4}$, respectively.
With the Euclidean inner product one may and will identify  $\cH_{n,d}$ with the space of
symmetric order-$d$ tensors:
\begin{equation}\label{eq:forms-tensors-isomorphism}
 \cH_{n,d}\simeq\operatorname{Sym}^d(\R^n),
 \qquad
 f(x)=\langle T_f,x^{\otimes d}\rangle.
\end{equation}
Thus while $f$ and $T_f$ describe equivalent objects, they emphasize different structures.  
Next, with $n\geq 2$, consider the homogeneous optimization problem

\begin{equation}\label{eq:problem}
 f_\star\,=\min_{x\in\mathbb{S}^{n-1}} f(x)\,=\min\{\,f(x):\|x\|^2=1\,\}\,.
\end{equation}
In particular for cubics,
\begin{equation}\label{eq:odd}
 \min_{\|x\|=1}f(x)
 =-\max_{\|x\|=1}f(x)
 =-\max_{\|x\|=1}|f(x)|\,,
\end{equation}
and observe that the last quantity is the negative spectral norm of the symmetric tensor $T_f$.
Hillar and Lim \cite[Theorem~10.2]{HillarLim2013} proved that computing the spectral norm and a best symmetric
rank-one approximation of a rational symmetric third-order tensor are NP-hard.
Thus \eqref{eq:problem} is itself
an NP-hard cubic optimization problem, not merely a problem related to a hard
tensor task.

For more details on complexity results and optimization algorithms/relaxations  for optimization on the unit sphere
the interested reader is referred to \cite[\S 3-4]{deKlerk2008Survey} and \cite{Faybusovich2004,LLWY2014,Nie2023,NieWang2014}

A local minimizer of $f|_{\Sph^{n-1}}$ is called \emph{spurious} when its value
is strictly larger than $f_\star$ (then called a spurious minimum).  
In the absence of spurious minima, the form is said to have a \emph{benign landscape}.
Trivially, quadratic forms have no spurious minimum 
and hence have a benign landscape.  
Therefore degree-$3$ is the first homogeneous case in which tensor
hardness and a nontrivial landscape coexist.  Degree-$4$ is at least as rich as
e.g., structured biquadratic forms encode graph and copositivity problems.

In \cite{LasserreOL2022} the author identified a remarkably 
economical test for \emph{local} optima. For instance,
at a negative critical point $x\in\mathbb{S}^{n-1}$ of
a cubic, the Second-Order Necessary Condition (SONC) has a compact statement in terms
of  the two smallest eigenvalues $(\lambda_1,\lambda_2)$ of its \emph{ambient} Hessian $\nabla^2f(x)$; moreover
Huang \cite{Huang2023} subsequently proved the
\emph{generic} implication from first- and second-order necessary conditions to the
second-order sufficient condition (SOSC), resolving the genericity issue in 
\cite{LasserreOL2022}.   Finally, in \cite{LasserreSIAM2022} the author considered polynomial optimization on $\mathbb{S}^{n-1}$
when the objective depends on few (say $m$) linear forms only. Then the problem
reduces from the unit sphere in $n$ variables to the unit ball in $m$ variables,
and several structured classes have no spurious minima.

In structured matrix and tensor learning problems, Ge et al. \cite{GeJinZheng2017}
have proven benign landscapes for specific problems with low-rank formulations 
and under structural hypotheses.

\subsection{Contribution}
\label{subsec:contributions}

 For the paper to be self-contained, one first re-derives the spectral 
 characterization in \cite{LasserreOL2022}  of spherical SONC points  (hence generically, of local minimizers,
 thanks to Huang's result \cite{Huang2023} on genericity). 
Then our main contribution is as follows:
\begin{enumerate}
 \item We sharpen the negative cubic test of \cite{LasserreOL2022} 
 by showing that, at a negative
 spherical critical point, the single inequality
 \(\lambda_1+\lambda_2\ge9f\) is equivalent to SONC, and  establish 
 \begin{equation}
 \label{eq:intro}
 6\,f_\star\,=\,
  \min_{\|y\|=1}\lambda_1(\nabla^2f(y))\,=\,\min_{\|y\|=1}\max_{\lambda\in\R}\, \{\lambda:\nabla^2f(y)\,
  \succeq\lambda\mathrm{I}\,\}\,.
 \end{equation}
That is, \eqref{eq:intro} provides  a ``min-max" formulation and a  spectral identity for the global minimum
which complements the local gap test of \cite{LasserreOL2022} and
also yields an exact generic characterization of negative spurious minima. In fact \eqref{eq:intro} extends 
to odd-degree forms:
\begin{equation}
 \label{eq:intro-odd}
 d(d-1)\,f_\star\,=\,
  \min_{\|y\|=1}\lambda_1(\nabla^2f(y))\,=\,\min_{\|y\|=1}\max_{\lambda\in\R}\, \{\lambda: \nabla^2f(y)\,
  \succeq\lambda\mathrm{I}\,\}\,.
 \end{equation}
By analogy with the classical characterization of the smallest eigenvalue of a symmetric matrix,
\eqref{eq:intro-odd} 
provides an odd-degree exact Hessian spectral identity, analogue of the Rayleigh--Ritz formula with
a sharp distinction between the odd and even degree cases.  
(The cubic case is also distinguished from arbitrary odd forms by the fact that the Hessian is a \emph{linear} pencil.)
It complements classical $Z$-eigenvalue results from alternative tensor formulations in
\cite{KoldaMayo2011,Lim2005,Qi2005}.
 \item We also show that for all odd degrees, benignity is preserved under a simple ``zero-extension",
 and so  benign class $\mathcal G_n$ strictly increases with the dimension, 
 i.e. $\mathcal G_{n}\subsetneq\mathcal G_{n+1}$ for all $n$. However one
 does not establish that benign forms become more frequent as dimension grows. The coefficient-space dimensions change, and no probability measure on those spaces has been specified.
 \item We identify and analyze several classes of nontrivial parametrized benign cubic forms:
 Namely, the height cubic, eikonal
  cubics, what we call an \emph{axial} family with an exact eigenvalue criterion, and inspired 
 by the latter, a dimension-independent class of genuinely non-commuting symmetric pencils.
  \item We prove ambient robustness results.  Namely, we say that a form is  \emph{robustly benign} 
 if it remains benign 
 after any sufficiently small perturbation in the full coefficient space; that is,  the form is robust against
 such perturbations of its coefficients.
 
 We show that some of the above benign families are indeed robust to perturbations
 under some conditions  on the family parameters. Interestingly, 
 even though such perturbations  generally destroy the algebraic structure defining the original benign family,
 the resulting form is still benign.  
 This is a distinguishing feature when compared with exact benignity.
 
 The above benign families examples are illustrative. For each value of the parameter
 the global minimum of the form is easy to obtain. However, what is interesting
 is that any (sufficiently small) perturbation yields a form not in the family in general,
 and for which the global minimum is not easy to obtain.
 
  \item For quartics, we derive a complete semidefinite criterion for the
 family of forms
 \[
  f_{\rho,C}(t,y)
  =\rho(t^2+\|y\|^2)^2+t^2y^{\mathsf T}Cy,
 \]
 to be benign. In addition, such quartics are robustly benign relative to their parameter space$(\rho,C)$
 (and not necessarily to arbitrary small perturbation of their coefficients)
 

\end{enumerate}

\section{Sphere, ball, and computational hardness}
In the remainder of the paper one assumes that $n\geq2$. For the paper to be  self-contained we first review some spectral characterizations described 
in \cite{LasserreOL2022}.
At $x\in\Sph^{n-1}$, let $\Pi_x=I-xx^{\mathsf T}$ denote the projection on $x^\perp$.
Next, with $T_x\mathbb{S}^{n-1}$ being the tangent space
to $\mathbb{S}^{n-1}$ at $x\in\Sph^{n-1}$, $T_x\Sph^{n-1}=x^\perp$.
We first consider a degree-$d$ form $f$.

\begin{definition}[FONC, SONC, and SOSC]
A point $x\in\Sph^{n-1}$ satisfies the spherical first-order necessary
condition (FONC) when
\begin{equation}\label{eq:fonc-d}
 \nabla f(x)=df(x)x.
\end{equation}
At a FONC point, the intrinsic Hessian $\nabla^2_Sf:\,x^\perp \to\,x^\perp$
is the ambient Hessian $\nabla^2f(x)$ minus 
$df(x)\,I$, and then restricted to $x^\perp$, i.e.,
\begin{equation}\label{eq:intrinsic}
 \nabla^2_S f(x)
 =\left.\Pi_x\bigl(\nabla^2f(x)-df(x)I\bigr)\Pi_x\right|_{x^\perp}\,.
\end{equation}
With the ambient Hessian, the second-order necessary condition (SONC) reads 
\[u^T(\nabla^2f(x)-d\,f(x)\,I)\,u\,\geq\,0\,,\quad \forall u\in\,x^\perp\,,\]
while it reads 
$\nabla^2_S f(x)\succeq0$ with intrinsic Hessian. With the strict inequality $\succ0$,  it is the second-order sufficient condition (SOSC).
\end{definition}

If $f$ is a form then so are its partial derivatives. Then by Euler's identity,
\begin{equation}\label{eq:euler}
 x^{\mathsf T}\nabla f(x)=df(x)\,;
 \qquad \nabla^2f(x)x=(d-1)\nabla f(x).
\end{equation}
Hence, every FONC point $x\in\mathbb{S}^{n-1}$ is an ambient-Hessian eigenvector with associated \emph{radial}
eigenvalue $d(d-1)f(x)$, i.e., $\nabla^2f(x)\,x=d(d-1)f(x)\,x$.

\begin{proposition}[Negative-value criterion \cite{LasserreOL2022}]\label{prop:lasserre}
Let $d>2$, let $x\in\Sph^{n-1}$ satisfy FONC, and assume $f(x)<0$.  If
$\lambda_1\le\cdots\le\lambda_n$ are the eigenvalues of $\nabla^2f(x)$, then
SONC is equivalent to
\begin{equation}\label{eq:general-gap}
 \lambda_1=d(d-1)f(x)\,;
 \qquad \lambda_2\ge df(x)\,,
\end{equation}
and with the strict second inequality, \eqref{eq:general-gap} is equivalent to SOSC.
\end{proposition}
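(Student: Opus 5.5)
The plan is to exploit the fact, noted just before the statement, that at a FONC point $x$ the ambient Hessian $H:=\nabla^2f(x)$ has $x$ as an eigenvector with radial eigenvalue $\mu:=d(d-1)f(x)$. Since $H$ is symmetric, the orthogonal complement $x^\perp=T_x\Sph^{n-1}$ is $H$-invariant. Let $\nu_1\le\cdots\le\nu_{n-1}$ denote the eigenvalues of $H|_{x^\perp}$. Then the spectrum of $H$, counted with multiplicity, is the multiset union $\{\mu\}\cup\{\nu_1,\dots,\nu_{n-1}\}$.

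Next I will translate SONC into a statement about the $\nu_i$. By definition, SONC says $u^{\mathsf T}(H-df(x)I)u\ge0$ for all $u\in x^\perp$. Because $x^\perp$ is invariant, this is equivalent to $\nu_1\ge df(x)$. Likewise, SOSC is equivalent to $\nu_1>df(x)$, since by \eqref{eq:intrinsic} the intrinsic Hessian is exactly $(H-df(x)I)|_{x^\perp}$.

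The key inequality is where the hypotheses $f(x)<0$ and $d>2$ enter. Since $d-1>1$ and $df(x)<0$, we get
\[
\mu=d(d-1)f(x)<df(x).
\]

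For the forward direction, assume SONC, so every $\nu_i\ge df(x)>\mu$. Then $\mu$ is a simple eigenvalue and is the smallest one, so $\lambda_1=\mu$. The remaining eigenvalues $\lambda_2,\dots,\lambda_n$ are exactly $\nu_1,\dots,\nu_{n-1}$, hence $\lambda_2=\nu_1\ge df(x)$. This gives \eqref{eq:general-gap}.

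For the converse, assume $\lambda_1=\mu$ and $\lambda_2\ge df(x)$. Removing one copy of $\mu$ from the spectrum leaves the multiset $\{\lambda_2,\dots,\lambda_n\}=\{\nu_1,\dots,\nu_{n-1}\}$. Its minimum is $\nu_1=\lambda_2\ge df(x)$, which is SONC.

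The one point needing care is the bookkeeping when $\mu$ has multiplicity greater than one. In that case some $\nu_i=\mu$, so $\nu_1=\mu$ and $\lambda_2=\mu<df(x)$. This contradicts the assumed gap inequality, so that case is excluded, and the multiset argument above is unaffected. The strict version (SOSC, i.e.\ $\nu_1>df(x)$, corresponding to $\lambda_2>df(x)$) follows verbatim with strict inequalities throughout.
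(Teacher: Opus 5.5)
Your proof is correct and follows essentially the same route as the paper. Both arguments use that $x$ is an eigenvector of $\nabla^2 f(x)$ with radial eigenvalue $d(d-1)f(x)<df(x)$, and identify the remaining eigenvalues with the tangent ones. Your explicit use of the invariance of $x^\perp$ and multiset cancellation simply makes rigorous the paper's step $\lambda_2=\min_{u\in x^\perp,\ \|u\|=1}u^{\mathsf T}\nabla^2f(x)u$.
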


\begin{proof}
The \emph{only if part}. 
SONC says that every
tangent eigenvalue is at least $df(x)$.  Since
$d(d-1)f(x)<df(x)$ for $f(x)<0$ and $d>2$, the radial eigenvalue $d(d-1)f(x)$ is the unique
lowest eigenvalue under the strict condition and is a lowest eigenvalue under
SONC.  For the \emph{if part}, since $\lambda_1=d(d-1)f(x)$, $x$ is the eigenvector associated with $\lambda_1$,
and therefore suppose that
\[d\,f(x)\leq\,\lambda_2=\min_{u\in x^\perp;\Vert u\Vert=1}u^T\nabla^2f(x)u\,,\]
with the $\min$ attained at $u_*$. Then from $\Pi_xu_*=(I-xx^T)\,u_*=u_*$, and 
\[\Pi_x\nabla^2f(x)\,=\,\nabla^2f(x)-xx^T\nabla^2f(x)=\nabla^2f(x)-d(d-1)f(x)\,xx^T\,,\]
\begin{eqnarray*}
u^T_*\Pi_x\left(\nabla^2f(x)-d\,f(x)\,I\right)\Pi_xu_*&=&u^T_*\Pi_x\left(\nabla^2f(x)-d\,f(x)\,I\right)\,u_*\\
&=&u^T_*\Pi_x(\nabla^2f(x)\,u_*-d\,f(x)\,u_*)\\
&=&u^T_*\nabla^2f(x)u_*-d\,f(x)\,\geq\,0\,,
\end{eqnarray*}
which proves that $0\leq \lambda_2-d\,f(x)=\lambda_{\min}(\nabla^2_S f(x))$, i.e.,
$\nabla^2_Sf(x))\succeq0$, and so  $x$ is a SONC point.
 \end{proof}

For a cubic, introduce the normalized lowest-eigenvalue gap
\begin{equation}\label{eq:G}
 G(x)=\frac{\lambda_2(\nabla^2f(x))-\lambda_1(\nabla^2f(x))}
 {|\lambda_1(\nabla^2f(x))|}.
\end{equation}

\begin{corollary}[The cubic threshold]\label{cor:cubic-gap}
At a negative FONC point $x\in\mathbb{S}^{n-1}$ of a cubic form,
\begin{equation}\label{eq:cubic-gap}
 \mathrm{SONC}\quad\Longleftrightarrow\quad
 [\,\lambda_1=6f(x),\quad \lambda_2\ge3f(x)\,]
 \quad\Longleftrightarrow\quad [\,\lambda_1=6f(x)\mbox{ and } G(x)\ge\frac12\,],
\end{equation}
and  \eqref{eq:cubic-gap}  with the strict threshold $G(x)>\frac12$, 
guarantees SOSC.
\end{corollary}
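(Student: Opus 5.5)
The plan is to specialize Proposition~\ref{prop:lasserre} to $d=3$ and then rewrite the second inequality as a condition on the gap $G(x)$.

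\textbf{First equivalence.} Take $d=3$ in Proposition~\ref{prop:lasserre}. Since $f(x)<0$ and $d=3>2$, the proposition says directly that SONC is equivalent to
\[
\lambda_1=d(d-1)f(x)=6f(x)
\quad\text{and}\quad
\lambda_2\ge d f(x)=3f(x).
\]
This is exactly the middle bracket in \eqref{eq:cubic-gap}, so nothing further is needed here.

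\textbf{Second equivalence.} Assume $\lambda_1=6f(x)$. Because $f(x)<0$, we have $\lambda_1<0$, so
\[
|\lambda_1|=-\lambda_1=-6f(x)>0,
\]
and $G(x)$ is well defined. Then
\[
G(x)=\frac{\lambda_2-6f(x)}{-6f(x)}.
\]
Multiplying by the positive number $-6f(x)$ gives
\[
G(x)\ge \tfrac12
\iff \lambda_2-6f(x)\ge -3f(x)
\iff \lambda_2\ge 3f(x).
\]
The same chain of steps with strict inequalities gives $G(x)>\frac12 \iff \lambda_2>3f(x)$.

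\textbf{Strict case.} By the last clause of Proposition~\ref{prop:lasserre}, the conditions $\lambda_1=6f(x)$ and $\lambda_2>3f(x)$ are equivalent to SOSC. So $G(x)>\frac12$ together with $\lambda_1=6f(x)$ guarantees SOSC.

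The only point that needs care is the sign. Dividing by $|\lambda_1|$ preserves the direction of the inequality only because $\lambda_1<0$, and this holds exactly when $f(x)<0$. That is why the hypothesis of a negative FONC point is essential.
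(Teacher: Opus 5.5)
Your proof is correct and follows the route the paper intends: the paper states this corollary without a separate proof, as the $d=3$ case of Proposition~\ref{prop:lasserre} plus the rewriting $G(x)\ge\frac12\iff\lambda_2\ge 3f(x)$ via $|\lambda_1|=-6f(x)$. One wording fix in your closing remark: dividing by $|\lambda_1|>0$ never reverses an inequality, so that is not where $f(x)<0$ enters; negativity is used to get $|\lambda_1|=-6f(x)\neq0$, which turns the threshold $\frac12$ into $\lambda_2\ge 3f(x)$ (and it is also a hypothesis of Proposition~\ref{prop:lasserre} itself).
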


Write $\cB_n=\{x\in\R^n:\|x\|\le1\}$ for the Euclidean ball.  For a nonzero odd-degree form, sphere
and ball minimization are exactly equivalent. 

\begin{proposition}[Sphere--ball equivalence]
\label{prop:ball}
Let $f$ be a nonzero odd-degree form.  Then
\begin{equation}\label{eq:ball}
 \min_{x\in\cB_n}f(x)=\min_{x\in\Sph^{n-1}}f(x)\,<\,0,
\end{equation}
and the global minimizers in the ball lie on the sphere.  Moreover, a negative
local minimizer on the sphere is a local minimizer relative to the ball, and
conversely every negative local minimizer in the ball lies on the sphere and is
a spherical local minimizer.
\end{proposition}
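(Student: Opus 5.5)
The whole argument rests on two elementary properties of a nonzero odd-degree form $f$ of degree $d$. The first is oddness, $f(-x)=-f(x)$. The second is homogeneity along rays, $f(rx)=r^df(x)$ for $r\ge0$.

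First we show that the minimum is negative and is attained on the sphere. Since $f\not\equiv0$, there is $x\in\Sph^{n-1}$ with $f(x)\neq0$. Replacing $x$ by $-x$ if necessary gives $f(x)<0$, so $\min_{\Sph^{n-1}}f<0$. For any $z\in\cB_n\setminus\{0\}$, write $z=r\,u$ with $u=z/\|z\|$ and $r=\|z\|\in(0,1]$; then $f(z)=r^df(u)\ge \min(0,f(u))\ge\min(0,\min_{\Sph^{n-1}}f)=\min_{\Sph^{n-1}}f$, and $f(0)=0$ is also at least this value. Hence the two minima coincide. If $z$ is a global minimizer in the ball, then $f(z)<0$, which forces $f(u)<0$ and $r^df(u)=\min_{\Sph^{n-1}}f\le f(u)$. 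This gives $r^d\ge1$, so $r=1$ and $z$ lies on the sphere.

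Next, take a negative spherical local minimizer $x$: there is $\varepsilon>0$ such that $f(u)\ge f(x)$ for all $u\in\Sph^{n-1}$ with $\|u-x\|<\varepsilon$. We want a ball neighborhood of $x$ on which $f\ge f(x)$. For $z\in\cB_n$ near $x$, $z$ is nonzero and $u=z/\|z\|$ is close to $x$ by continuity of normalization away from $0$; so, after shrinking the neighborhood, $\|u-x\|<\varepsilon$. Then $f(u)\ge f(x)$, and we split into two cases. If $f(u)\ge0$, then $f(z)\ge0>f(x)$. If $f(u)<0$, then $f(z)=r^df(u)\ge f(u)\ge f(x)$, because $r\le1$. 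The only technical care in the proof is choosing the neighborhood radius so that it stays away from the origin and normalization maps it into the $\varepsilon$-cap; this is routine.

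Finally, the converse. Let $z$ be a local minimizer of $f$ over the ball with $f(z)<0$. Then $z\neq0$. Suppose $r=\|z\|<1$, and consider $tz$ for $t$ slightly larger than $1$: this point stays in the ball and $f(tz)=t^df(z)<f(z)$, contradicting local minimality. Hence $\|z\|=1$. Spherical local minimality then follows immediately, since the sphere is contained in the ball and any ball neighborhood of $z$ contains the corresponding spherical neighborhood.

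The main obstacle, mild as it is, is the second paragraph (negative spherical local minimizers are ball local minimizers). This is the one place where the argument needs the sign split together with $r^d\le1$, and the one place where the neighborhood must be chosen with some care.
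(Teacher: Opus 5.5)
Your proof is correct and follows the paper's route. The common ingredients are the polar decomposition $z=ru$ with $f(ru)=r^df(u)$, the sign split on $f(u)$ using $r^d\le1$, and an outward radial move that forces negative ball minimizers onto the sphere; you also supply two points the paper leaves implicit: that $\min_{\Sph^{n-1}}f<0$ follows from oddness, and that the sphere-to-ball direction needs all nearby $z$ controlled via $u=z/\|z\|$ in the spherical neighborhood, not just purely radial moves.
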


\begin{proof}
Let $f$ be a degree-$d$ form with $d=2s+1$, and $m=\min_{\Sph^{n-1}}f<0$.  Every nonzero $x\in\cB_n$ has the form $x=ru$
with $u\in\Sph^{n-1}$ and $0<r\le1$.  Homogeneity gives
$f(ru)=r^df(u)$.  If $f(u)\ge0$, then $f(ru)\ge0>m$; if $f(u)<0$, then
$r^df(u)\ge f(u)\ge m$.  This proves the equality and the statement about
global minimizers.  A negative interior point (i.e. with $f(x)<0$ and $\Vert x\Vert <1$) cannot be locally minimizing,
because a small outward radial move decreases its value.  At a
negative boundary point, moving inward radially increases the value, so the
local statements follow.  The restriction to negative local minima is
essential: a form  such as $x_1^{2s}x_2$ can have zero-valued, nonisolated local
minima in the interior region where $x_2>0$ and $x_1=0$.
\end{proof}

The ball $\mathcal{B}_n$ is convex, but the objective is not, and therefore convexity of the domain
does not remove NP-hardness or spurious local minima.  On the other
hand, the ball formulation can be useful for relaxations, for minimization algorithms and for 
dimension reduction as already exploited in  \cite{LasserreSIAM2022}.

\subsection{A concrete NP-hard cubic tensor problem}
\label{subsec:cubic-np-hardness}

The hardness already appears in the exact class studied in this paper.  For a
rational symmetric tensor $T\in\operatorname{Sym}^3(\mathbb Q^n)$, put
\begin{equation}\label{eq:symmetric-tensor-cubic}
 f_T(x)=\langle T,x^{\otimes3}\rangle.
\end{equation}
By Banach's symmetric-tensor theorem,
\begin{equation}\label{eq:cubic-spectral-norm}
 \|T\|_\sigma
 =\max_{\|x\|=1}|f_T(x)|
 =-\min_{\|x\|=1}f_T(x)
 =-\min_{\|x\|\le1}f_T(x).
\end{equation}
Hillar and Lim \cite[Theorems~6.9 and~10.2]{HillarLim2013} prove that computing this quantity is NP-hard, remains NP-hard
under the symmetry restriction, and admits no FPTAS unless ${\rm P}={\rm NP}$.
Equivalently, determining the
best symmetric rank-one approximation
\begin{equation}\label{eq:best-symmetric-rank-one}
 \min_{\lambda\ge0,\,\|u\|=1}
 \|T-\lambda u^{\otimes3}\|_F
\end{equation}
is NP-hard, with the optimal $\lambda$ equal to $\|T\|_\sigma$
\cite[Theorem~10.1]{HillarLim2013}.

Hence an exact solution
of the cubic sphere problem recovers the NP-hard stability number; see also 
\cite[\S 3.3]{deKlerk2008Survey}.  Oddness
turns the maximum in \eqref{eq:cubic-spectral-norm} into the corresponding
minimum, and Proposition \ref{prop:ball} shows that replacing the sphere by the convex
unit ball does not change the optimum.  Consequently,
\begin{equation}\label{eq:cubic-hard-sphere-ball}
 \boxed{\text{minimizing a cubic form over either the unit sphere or the
 unit ball is NP-hard.}}
\end{equation}
The convexity of the ball  simplifies the constraint handling, but
does not make the general problem computationally tractable.
The relation between spherical critical points and KKT points of the convex
ball formulation is also exact, apart from the unavoidable stationary point at
the origin.

\begin{proposition}[KKT points of the cubic ball problem]
\label{prop:cubic-ball-kkt}
Let $f$ be a nonzero cubic form and consider
\begin{equation}\label{eq:cubic-ball-problem}
 \min\{f(x):\|x\|^2\le1\}.
\end{equation}
Using the Lagrangian $L(x,\lambda)=f(x)+\lambda\,(\|x\|^2-1)$, a boundary
point $x\in\Sph^{n-1}$ is a KKT point of \eqref{eq:cubic-ball-problem} if and
only if it is a FONC point with $f(x)\le0$.  Its associated multiplier is
\begin{equation}\label{eq:cubic-ball-multiplier}
 0\,\leq\,\lambda^*\,=-\frac32f(x).
\end{equation}
At a negative boundary KKT point, the ball second-order necessary condition is exactly
the spherical SONC.  Every interior KKT point satisfies $\nabla f(x)=0$ and
$f(x)=0$; in particular, the origin is always a KKT point, but it is not a
local minimum of a nonzero cubic.
\end{proposition}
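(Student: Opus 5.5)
The argument is a direct computation with the Lagrangian $L(x,\lambda)=f(x)+\lambda(\|x\|^2-1)$ and Euler's identity \eqref{eq:euler} for $d=3$.

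\emph{Boundary KKT points.} For $x\in\Sph^{n-1}$ the KKT conditions are stationarity $\nabla f(x)+2\lambda x=0$, dual feasibility $\lambda\ge0$, and complementarity, which holds automatically on the sphere. Contracting stationarity with $x$ and using $x^{\mathsf T}\nabla f(x)=3f(x)$ gives $3f(x)+2\lambda=0$, hence $\lambda=-\tfrac32 f(x)$. Substituting back turns stationarity into $\nabla f(x)=3f(x)x$, which is exactly FONC \eqref{eq:fonc-d}. Dual feasibility $\lambda\ge0$ becomes $f(x)\le0$. Conversely, a FONC point with $f(x)\le0$ satisfies stationarity with $\lambda=-\tfrac32 f(x)\ge0$. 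The constraint gradient $2x\neq0$, so LICQ holds and the multiplier is unique. This proves the first two claims.

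\emph{Second order at a negative boundary KKT point.} Here $\lambda^*>0$, so the constraint is strongly active. The critical cone is therefore $\{u:\ x^{\mathsf T}u=0\}=x^\perp$, since the linearized objective direction $\nabla f(x)^{\mathsf T}u=3f(x)x^{\mathsf T}u$ vanishes exactly there. The ball SONC requires
\[
u^{\mathsf T}\nabla^2_{xx}L\,u=u^{\mathsf T}\bigl(\nabla^2 f(x)+2\lambda^* I\bigr)u=u^{\mathsf T}\bigl(\nabla^2f(x)-3f(x)I\bigr)u\ge0
\]
for all $u\in x^\perp$, which is verbatim the spherical SONC with $d=3$. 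The only point needing care is identifying the critical cone, and strict complementarity settles it.

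\emph{Interior KKT points.} If $\|x\|<1$, complementarity forces $\lambda=0$, so $\nabla f(x)=0$. Euler then gives $3f(x)=x^{\mathsf T}\nabla f(x)=0$. The origin qualifies because $\nabla f$ is a quadratic form and vanishes at $0$.

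\emph{The origin is not a local minimum.} Since $f\neq0$, pick $u$ with $f(u)\ne0$. Because $f$ is odd, $f(-u)=-f(u)$, so after possibly replacing $u$ by $-u$ we may assume $f(u)<0$. Then $f(tu)=t^3f(u)<0=f(0)$ for every small $t>0$, so $0$ is not a local minimizer. No step here is a real obstacle.
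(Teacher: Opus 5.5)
Your proposal is correct and follows essentially the same route as the paper: you contract stationarity with $x$ via Euler's identity to get $\lambda^*=-\tfrac32 f(x)$ and FONC, restrict the Lagrangian Hessian to $x^\perp$, use Euler again at interior points, and use oddness at the origin. Your identification of the critical cone as $x^\perp$ via strict complementarity, together with the LICQ remark, makes explicit a step the paper leaves implicit.
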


\begin{proof}
At a boundary KKT point, stationarity gives
$\nabla f(x)+2\lambda x=0$.  Taking the inner product with $x$ and using
Euler's identity gives $3f(x)+2\lambda=0$.  Thus $\lambda\ge0$ is equivalent
to $f(x)\le0$, and stationarity becomes $\nabla f(x)=3f(x)x$, which is
spherical FONC.  On tangent directions, the Lagrangian Hessian is
\begin{equation}\label{eq:ball-tangent-hessian}
 \left.(\nabla^2f(x)+2\lambda I)\right|_{x^\perp}
 =\left.(\nabla^2f(x)-3f(x)I)\right|_{x^\perp},
\end{equation}
the intrinsic spherical Hessian.  At an interior KKT point,
$\nabla f(x)=0$, and Euler's identity gives $f(x)=0$.  Finally, as nonzero
oddness gives both signs arbitrarily close to the origin, so $0$ is not a
local minimum.
\end{proof}

Consequently, for a benign cubic form every \emph{negative local minimizer} of the
ball problem is global.  It is straightforward to see that it also extends to all odd-degree forms.
Importantly,  when interpreting guarantees for local
algorithms, one must not replace ``local minimizer'' by arbitrary
``KKT point'' because KKT conditions also include saddle points and also the zero-valued origin.

\subsection{Concrete NP-hard quartic reductions}

Minimization of quartic forms on the sphere already contains several classical hard
problems.  The following reductions make the statement precise and complement
the previous hardness results for tensors cited in
\cite{LasserreOL2022,LasserreSIAM2022}.

First consider a graph $G$ with adjacency matrix $A_G$ and clique number
$\omega(G)$.  The Motzkin--Straus theorem \cite{MotzkinStraus1965} states
\begin{equation}\label{eq:motzkin-straus}
 \max_{z\ge0,\,\mathbf1^{\mathsf T}z=1}z^{\mathsf T}A_Gz
 =1-\frac1{\omega(G)}.
\end{equation}
(see for instance \cite{deKlerk2008Survey}).  Substituting $z_i=x_i^2$ maps the simplex onto the sphere and yields
\begin{equation}\label{eq:clique-quartic}
 \max_{\|x\|=1}\sum_{i,j}(A_G)_{ij}x_i^2x_j^2
 =1-\frac1{\omega(G)}.
\end{equation}
Consequently, minimizing the negative of the quartic in
\eqref{eq:clique-quartic} computes the maximum-clique number and is NP-hard.
Next, for a rational symmetric matrix $M$, define
\begin{equation}\label{eq:copositive-quartic}
 q_M(x)=\sum_{i,j}M_{ij}x_i^2x_j^2.
\end{equation}
The matrix $M$ is copositive if and only if
\begin{equation}\label{eq:copositive-sphere}
 \min_{\|x\|=1}q_M(x)\ge0.
\end{equation}
Indeed, $z_i=x_i^2$ converts the sphere into the simplex, and homogeneity
converts nonnegativity on the simplex into nonnegativity on the positive
orthant.  Murty and Kabadi \cite{MurtyKabadi1987} 
proved that deciding non-copositivity is NP-complete;
thus even the highly structured quartics
\eqref{eq:copositive-quartic} models an NP-hard spherical optimization problem.

Third, a biquadratic form $b(u,v)$ is a quartic form in the couple $(u,v)$.  Deciding its global nonnegativity is strongly NP-hard
\cite{AhmadiEtAl2013}; equivalently,
\begin{equation}\label{eq:biquadratic-sphere}
 b(u,v)\ge0\ \text{for all }(u,v)
 \quad\Longleftrightarrow\quad
 \min_{\|(u,v)\|=1}b(u,v)\ge0.
\end{equation}
Eq. \eqref{eq:biquadratic-sphere}
is the starting point for the strong NP-hardness of
deciding convexity of quartic polynomials in \cite{AhmadiEtAl2013}, a paper
explicitly cited in \cite{LasserreSIAM2022}.  Finally, an order-four symmetric
tensor $\mathcal A$ defines the quartic $x\mapsto\mathcal A[x,x,x,x]$;
optimizing this form and its negative computes its spectral norm, another
NP-hard tensor problem \cite{HillarLim2013}.

\section{A spectral-gap characterization}
In this section we sharpen the spectral characterizations of \cite{LasserreOL2022} 
reviewed in
\S 2.
\subsection{A global spectral characterization}
\label{subsec:gap-depth}

For a cubic form $f$ write
\[ H(x):=\nabla^2f(x),\qquad
 h(x):=\lambda_1(H(x))+\lambda_2(H(x))\,,\quad\forall x\in\R^n\,,
\]
and recall that $h$ is a concave function on \(\R^n\).

\begin{proposition}[A single two-eigenvalue SONC inequality]
\label{prop:ky-fan-sonc}
Let \(x\in\Sph^{n-1}\) be a FONC point of a cubic form $f$ with \(f(x)<0\). 
Then
\begin{equation}\label{eq:ky-fan-sonc}
 x\text{ satisfies SONC}
 \quad\Longleftrightarrow\quad
 h(x)\ge9f(x).
\end{equation}
Under these equivalent conditions,
\begin{equation}\label{eq:ky-fan-slack}
 h(x)-9f(x)
 =
 \lambda_{\min}\bigl(\nabla^2_S f(x)\bigr).
\end{equation}
In particular, the strict inequality in \eqref{eq:ky-fan-sonc} is equivalent
to SOSC.
\end{proposition}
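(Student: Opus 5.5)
The plan is to reduce everything to the spectral decomposition of \(H(x)\) at a FONC point and then compare with Proposition~\ref{prop:lasserre} (equivalently Corollary~\ref{cor:cubic-gap}). By Euler's identity \eqref{eq:euler}, \(x\) is an eigenvector of \(H(x)\) with radial eigenvalue \(6f(x)\). Since \(H(x)\) is symmetric, \(x^\perp\) is invariant. Let \(\mu_1\le\cdots\le\mu_{n-1}\) denote the eigenvalues of \(H(x)\) restricted to \(x^\perp\). The spectrum of \(H(x)\) is then the multiset \(\{6f(x),\mu_1,\dots,\mu_{n-1}\}\). By \eqref{eq:intrinsic}, the intrinsic Hessian has eigenvalues \(\mu_i-3f(x)\), so
\[
\lambda_{\min}(\nabla^2_Sf(x))=\mu_1-3f(x),\qquad \text{SONC}\iff \mu_1\ge 3f(x).
\]

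Next I would compute \(h(x)=\lambda_1+\lambda_2\), the sum of the two smallest elements of this multiset, by splitting into two cases.

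\emph{Case A: \(\mu_1\ge 6f(x)\).} Then \(\lambda_1=6f(x)\) and \(\lambda_2=\mu_1\), so \(h(x)=6f(x)+\mu_1\). This gives \(h(x)-9f(x)=\mu_1-3f(x)\), which is \eqref{eq:ky-fan-slack}.

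\emph{Case B: \(\mu_1<6f(x)\).} Then \(\lambda_1=\mu_1\) and \(\lambda_2=\min(\mu_2,6f(x))\), so \(h(x)\le \mu_1+6f(x)<12f(x)\). Since \(f(x)<0\), we have \(12f(x)<9f(x)\), hence \(h(x)<9f(x)\).

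Now I would assemble the equivalence \eqref{eq:ky-fan-sonc}.

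\emph{SONC implies the inequality.} Assume SONC, i.e.\ \(\mu_1\ge 3f(x)\). Because \(f(x)<0\) we have \(3f(x)>6f(x)\), so \(\mu_1>6f(x)\) and we are in Case A. There \(h(x)-9f(x)=\mu_1-3f(x)\ge0\).

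\emph{The inequality implies SONC.} Assume \(h(x)\ge 9f(x)\). This excludes Case B, so Case A holds, and then \(\mu_1-3f(x)=h(x)-9f(x)\ge0\), which is SONC.

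The slack identity \eqref{eq:ky-fan-slack} holds in Case A, which is exactly the regime of these equivalent conditions. Strict inequality \(h(x)>9f(x)\) likewise corresponds to \(\mu_1>3f(x)\), that is \(\nabla^2_Sf(x)\succ0\), which is SOSC.

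The only delicate step is ruling out the configuration where a tangent eigenvalue undercuts the radial one. Its key ingredient is the sign condition \(f(x)<0\), which makes \(12f(x)<9f(x)\) and \(3f(x)>6f(x)\). One could alternatively invoke Proposition~\ref{prop:lasserre} directly for one direction, but the case split above is self-contained and gives the slack identity at the same time.
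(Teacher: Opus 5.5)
Your proof is correct and follows the paper's route. The paper's key step is exactly your Case B: an eigenvalue strictly below the radial value \(6f(x)\) forces \(h(x)<12f(x)<9f(x)\). After that, the paper reduces to Corollary~\ref{cor:cubic-gap}, which your Case A reproduces directly by writing the spectrum of \(\nabla^2_Sf(x)\) as \(\mu_i-3f(x)\); this also yields the slack identity.
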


\begin{proof}
The \emph{only if part} follows from Corollary \ref{cor:cubic-gap}. For the
\emph{if part}, let  \(r:=6f(x)<0\).  By FONC and Euler's identity, \(r\) is an eigenvalue of
\(H(x)\).  If an eigenvalue \(\mu\) were strictly smaller than \(r\), the sum
of the two smallest eigenvalues would satisfy
\[
 h(x)\le\mu+r<2r<\frac32r=9f(x),
\]
contradicting \eqref{eq:ky-fan-sonc}.  Hence \(r=\lambda_1(H(x))\), and
\(h(x)\ge9f(x)\) reduces to
\(\lambda_2(H(x))\ge3f(x)\), which is precisely Corollary \ref{cor:cubic-gap}.
The same calculation yields
\eqref{eq:ky-fan-slack}.
\end{proof}

The concavity of \(h\) does not turn cubic minimization into a convex
problem: the critical-point equations are nonconvex and \(h-9f\) is not
concave.  Moreover, maximizing \(h\) does not locate a global minimizer.  For
example, for \(f(x)=x_1^3\) in dimension \(n\ge3\), the global minimizer
\(-e_1\) has \(h(-e_1)=-6\), whereas every equatorial critical point has
\(h=0\).

\begin{corollary}[Global spectral characterization]\label{thm:global-lambda-one}
Let  \(f\in\cH_{n,3}\) be an arbitrary real cubic form. Then:
\begin{equation}\label{eq:global-lambda-one}
 \alpha(f):=\min_{\|x\|=1}\lambda_1(\nabla^2f(x))
 =6\min_{\|x\|=1}f(x)=6f_\star.
\end{equation}
The minimum defining \(\alpha(f)\) may equivalently be taken over the closed
unit ball $\mathcal{B}_n$.
\end{corollary}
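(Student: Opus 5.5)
We prove the identity by establishing the two inequalities $\alpha(f)\le 6f_\star$ and $\alpha(f)\ge 6f_\star$ separately. We then handle the ball version using the fact that $x\mapsto\nabla^2f(x)$ is linear for a cubic. We may assume $f\neq0$; otherwise both sides vanish.

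For the upper bound, let $x_\star\in\Sph^{n-1}$ be a global minimizer of $f$ on the sphere. By \eqref{eq:odd}, $f_\star<0$, and $x_\star$ satisfies FONC. Euler's identity \eqref{eq:euler} then gives $\nabla^2f(x_\star)x_\star=6f(x_\star)x_\star$. Since $\|x_\star\|=1$, the Rayleigh quotient yields $\lambda_1(\nabla^2f(x_\star))\le x_\star^{\mathsf T}\nabla^2f(x_\star)x_\star=6f_\star$, so $\alpha(f)\le 6f_\star$. One could also invoke Corollary \ref{cor:cubic-gap}, which gives equality at $x_\star$, but the Rayleigh bound is all we need here.

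The lower bound is the substantive step, and it does not require any critical-point analysis. For a cubic, the map $y\mapsto\nabla^2f(y)$ is linear, and the third derivative tensor is symmetric, so
\[
 u^{\mathsf T}\nabla^2f(y)u=\nabla^3f[y,u,u]=6\,\langle T_f,\,y\otimes u\otimes u\rangle .
\]
Hence
\[
\lambda_1(\nabla^2f(y))=\min_{\|u\|=1}6\,T_f[y,u,u]\ge 6\min_{\|y\|=\|u\|=1}T_f[y,u,u].
\]
By Banach's theorem on symmetric tensors, already invoked in \eqref{eq:cubic-spectral-norm}, $\max_{\|y\|=\|u\|=\|w\|=1}|T_f[y,u,w]|=\max_{\|x\|=1}|f(x)|=-f_\star$. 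It follows that $T_f[y,u,u]\ge f_\star$ for all unit vectors $y,u$, and therefore $\alpha(f)\ge 6f_\star$.

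The one delicate point is the appeal to Banach's theorem, which states that the spectral norm of a symmetric tensor is attained on the diagonal. If we prefer a self-contained argument, we can instead minimize $g(y,u)=T_f[y,u,u]$ over the product of spheres and use first-order conditions together with a polarization argument to show that the minimum is attained at $y=\pm u$. I expect this to be the main obstacle; citing Banach's theorem avoids it.

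For the ball, note that for $y\in\cB_n$ with $y=ru$, $0\le r\le1$, $\|u\|=1$, linearity gives $\lambda_1(\nabla^2f(ru))=r\lambda_1(\nabla^2f(u))$. Since $\lambda_1(\nabla^2f(u))\ge\alpha(f)$ and $\alpha(f)=6f_\star<0$, we get $r\lambda_1(\nabla^2f(u))\ge r\alpha(f)\ge\alpha(f)$. Thus the minimum over the ball is no smaller than $\alpha(f)$. It is also no larger, because the sphere is contained in the ball, so the two minima coincide.
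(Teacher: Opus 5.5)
Your proof is correct and follows essentially the paper's route. Both arguments rest on $u^{\mathsf T}\nabla^2f(y)u=6\,T_f[y,u,u]$, Banach's theorem, and the diagonal case $y=u$, and both handle the ball by linearity of $y\mapsto\nabla^2f(y)$. You package the diagonal case as a Rayleigh bound at $x_\star$; FONC is not actually needed there, since $x^{\mathsf T}\nabla^2f(x)x=6f(x)$ holds at every $x$ by Euler's identity. Your explicit use of $\alpha(f)\le0$ to get $r\,\alpha(f)\ge\alpha(f)$ for $0\le r\le1$ makes precise a step the paper leaves implicit.
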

\begin{proof}
Let \(T_f\in\operatorname{Sym}^3(\R^n)\) be the unique symmetric trilinear
form associated with \(f\), so that \(f(x)=T_f[x,x,x]\).  Then with $u,v\in\mathbb{S}^{n-1}$,
\[Df(x)[u]\,=\,3\,T_f(x,x,u)\,;\quad u^T\nabla^2f(x)\,v\,=\,D^2f(x)[u,v]\,=\,6\,T_f(x,u,v)\,,\]
and therefore $u^T\nabla^2f(x)u=6\,T_f(x,u,u)$.
As a result, for \(x\in\mathbb{S}^{n-1}\),
\[
 \lambda_1(\nabla^2f(x))\,=\,6\,\min_{\|v\|=1}T_f[x,v,v]\,,\]
which yields
 \[\min_{\Vert x\Vert=1}\lambda_1(\nabla^2f(x))
 \,=\,6\,\min_{\Vert x\Vert=\|v\|=1}T_f[x,v,v]\,=\,-6\,\max_{\Vert x\Vert=\|v\|=1}\vert \,T_f[x,v,v]\,\vert\,.\]
Next, by Banach's theorem for real symmetric tensors,
\[
 \max_{\|y\|=\|v\|=1}|T_f[y,v,v]|
 =
 \max_{\|x\|=1}|T_f[x,x,x]|\,=\, \max_{\|x\|=1}|f(x)|\,=\,-\min_{\|x\|=1}f(x)\,,
\]
where the odd degree of $f$ is invoked for the last equality.  This proves
\eqref{eq:global-lambda-one}.  Finally, for a cubic form,
\(\lambda_1(\nabla^2f(r\,x))=r\,\lambda_1(\nabla^2f(x))\) for all $r\geq0$,
which yields the conclusion on the ball formulation.
\end{proof}
So the global minimum of a nonzero cubic form on the unit sphere $\mathbb{S}^{n-1}$ 
(equivalently on the unit ball $\mathcal{B}_n$) is negative and is also 
one sixth of the global minimum of the smallest eigenvalue of the Hessian $\nabla^2f$ on
$\mathbb{S}^{n-1}$. Equivalently, both minima may be taken over the closed unit ball.

%
\begin{remark}
Corollary 2.3 characterizes SONC at negative critical points using the two smallest eigenvalues of the ambient Hessian. By contrast, 
Corollary \ref{thm:global-lambda-one} characterizes the global minimum value using only the smallest eigenvalue, minimized over the sphere.
However this latter characterization is concerned with the global minimum and not necessarily the global minimizers.
Indeed
\[\arg\min_{x\in\mathbb{S}^{n-1}} f(x)\,\subseteq\arg\min_{x\in\mathbb{S}^{n-1}} \lambda_1(\nabla^2f(x))\,,\]
but as the following example shows, the two sets may differ.
Let
$f(x):=x_1^3-3x_1x_2^2$. Then $f^*=-1$, and $\lambda_1(\nabla^2f(y))=6f^*$ for all $y\in\mathbb{S}^{1}$.
On the other hand, writing $x=(\cos(\theta),\sin(\theta))$,  the only points $\theta\in[0,2\pi]$ with
$f(x)=\cos(3\theta)=-1$ are $\theta\in\{\pi/3,\pi,5\pi/3\}$. So $\arg\min_{x\in\mathbb{S}^{n-1}} f(x)\not=
\arg\min_{x\in\mathbb{S}^{n-1}} \lambda_1(\nabla^2f(x))$.
\end{remark}
\subsection*{An odd-degree Hessian analogue of Rayleigh-Ritz}
From the arguments of the proof of Corollary \ref{thm:global-lambda-one}, it follows that the spectral result for cubic forms also holds for 
odd-degree forms (but not necessarily for even-degree  forms (except quadratic forms)). 
Indeed if \(T_f\) is the symmetric \(d\)-linear tensor associated with \(f\), then
\[v^{\mathsf T}\nabla^2f(x)v=d(d-1)T_f[x^{d-2},v,v],\]
and the result follows from oddness, the diagonal inclusion, and Banach's theorem.
Therefore  for an odd degree-$d$ form $f$,
\begin{equation}\label{eq:global-lambda-d}
 \alpha(f):=\min_{\|x\|=1}\lambda_1(\nabla^2f(x))
 =d(d-1)\min_{\|x\|=1}f(x)=d(d-1)f_\star\,.
\end{equation}
Optimization of homogeneous polynomials on the Euclidean unit sphere has a classical tensor-eigenvalue formulation: the constrained stationary points correspond to $Z$-eigenvectors of the associated symmetric tensor, and the global extrema are its extreme $Z$-eigenvalues; see \cite{Lim2005,Qi2005,KoldaMayo2011}. Equivalently, these stationary points arise from the homogeneous Rayleigh quotient $x\mapsto f(x)/\|x\|^d$, defined for $x\ne0$. 
Namely, for a degree-$d$ homogeneous polynomial $f$, differentiation yields
\[
\nabla\!\left(\frac{f(x)}{\|x\|^d}\right)
=\|x\|^{-d}\nabla f(x)
-df(x)\|x\|^{-d-2}x.
\]
On $\|x\|=1$, stationarity is therefore equivalent to \(\nabla f(x)=df(x)x\).
Writing $f(x)=T_f[x,\ldots,x]$ for its symmetric coefficient tensor, this becomes
$T_fx^{d-1}=f(x)x$, i.e., the $Z$-eigenvalue equation with unit normalization Qi \cite{Qi2005}; see also
Lim \cite{Lim2005} 
and Kolda--Mayo \cite{KoldaMayo2011}.
The classical (Z)-eigenvalue formulation identifies the spherical minimum with the smallest (Z)-eigenvalue 
of the associated symmetric tensor (and is valid for odd and even degrees). For odd degrees with $d\geq 3$, \eqref{eq:global-lambda-d} 
provides an alternative Hessian formulation to recover the same minimum (after dividing by $d(d-1)$) by minimizing the smallest 
ordinary matrix eigenvalue of the ambient Hessian over the sphere, without imposing the tensor-eigenvector equations. 
The identity follows from Banach's theorem for symmetric multilinear forms and crucially,  oddness; 
its minimizing points need \emph{not} be spherical critical points of the original form.



\begin{corollary}[Gap--depth characterization]\label{cor:gap-depth}
Let $\mathcal A_3\subset \mathcal H_{n,3}$ be the exceptional algebraic set in 
Huang's genericity result \cite[Theorem 3.4]{Huang2023}. 
Let $f\in\mathcal H_{n,3}$ and \(x\in\mathbb{S}^{n-1}\)  a FONC point satisfying SONC with $f(x)<0$.  

(i) Then:
\begin{eqnarray*}
 x\text{ is a global minimizer}
 &\Longleftrightarrow&
 \lambda_1(\nabla^2f(x))=\alpha(f)\,.\\
 x\text{ is a spurious minimizer}
 &\Longrightarrow&
 \lambda_1(\nabla^2f(x))>\alpha(f)\,,
\end{eqnarray*}
and the reverse implication $\Longleftarrow$ holds whenever $f\not\in\mathcal A_3$.

(ii) Consequently, if  $f\not\in\mathcal A_3$, 
negative spurious local minima are exactly the points satisfying
\begin{equation}\label{eq:negative-spurious-spectral}
 \nabla f(x)=3f(x)x,\quad f(x)<0,\quad
 h(x)\ge9f(x),\quad
 \lambda_1(H(x))>\min_{\|y\|=1}\lambda_1(H(y)).
\end{equation}
(iii) $f\not\in\mathcal A_3$
has negative spurious local minima if and only if
there exist $x,y\in\mathbb{S}^{n-1}$, such that $f(x)<0$, $f(y)<0$, and
\begin{eqnarray}\label{eq:negative-spurious-spectral-2}
 \nabla f(x)=3f(x)\,x,\quad h(x)\ge9f(x);&&
 \nabla f(y)=3f(y)\,y,\quad h(y)\ge9f(y)\,;\\
  \label{eq:negative-spurious-spectral-3}
  \lambda_1(H(x))&>&\lambda_1(H(y))\,.
  \end{eqnarray}
\end{corollary}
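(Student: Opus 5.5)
The argument rests on one observation. At a negative FONC point $x$ satisfying SONC, Corollary~\ref{cor:cubic-gap} (equivalently Proposition~\ref{prop:lasserre}) gives $\lambda_1(\nabla^2f(x))=6f(x)$. By Corollary~\ref{thm:global-lambda-one}, $\alpha(f)=6f_\star$. Hence
\[
\lambda_1(\nabla^2f(x))-\alpha(f)=6\bigl(f(x)-f_\star\bigr)\ \ge 0,
\]
and the whole corollary reduces to comparing $f(x)$ with $f_\star$.

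For (i), the first equivalence follows at once: $x$ is a global minimizer iff $f(x)=f_\star$, iff $\lambda_1=\alpha(f)$. For the forward implication in the spurious case, a spurious minimizer is a local minimizer, hence a FONC and SONC point, with $f(x)>f_\star$; the identity then gives strict inequality. For the reverse implication, suppose $\lambda_1(\nabla^2f(x))>\alpha(f)$. Then $f(x)>f_\star$, but $x$ is only known to be a SONC point, not a local minimizer. This is the step that needs genericity. I would invoke Huang's theorem \cite[Theorem 3.4]{Huang2023}: for $f\notin\mathcal A_3$, every point satisfying FONC and SONC on the sphere satisfies SOSC, i.e.\ $\nabla^2_Sf(x)\succ0$. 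SOSC makes $x$ a strict local minimizer, and since $f(x)>f_\star$ it is spurious. The main obstacle is making sure Huang's exceptional set is exactly the one where FONC+SONC fails to imply SOSC, and that it applies to all critical points simultaneously. I would take this as the content of the cited result.

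For (ii), I would combine (i) with Proposition~\ref{prop:ky-fan-sonc}. For negative FONC points, SONC is equivalent to $h(x)\ge 9f(x)$. So for $f\notin\mathcal A_3$, a point is a negative spurious local minimizer iff it satisfies FONC, $f(x)<0$, $h(x)\ge9f(x)$, and $\lambda_1(H(x))>\alpha(f)$. The last condition is written as $\min_{\|y\|=1}\lambda_1(H(y))$ by the definition of $\alpha(f)$.

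For (iii), I would handle the two directions separately.
\begin{itemize}
\item[($\Rightarrow$)] Take $x$ a negative spurious minimizer and $y$ a global minimizer. The global minimizer exists by compactness and satisfies $f(y)=f_\star<0$ for nonzero $f$. Both are FONC+SONC points, so both satisfy \eqref{eq:negative-spurious-spectral-2} by Proposition~\ref{prop:ky-fan-sonc}. Then $\lambda_1(H(x))=6f(x)>6f_\star=\lambda_1(H(y))$.
\item[($\Leftarrow$)] Both $x$ and $y$ are negative SONC points, so $\lambda_1(H(x))=6f(x)$ and $\lambda_1(H(y))=6f(y)$. The strict inequality gives $f(x)>f(y)\ge f_\star$, so $\lambda_1(H(x))>\alpha(f)$, and (ii) makes $x$ a negative spurious local minimizer.
\end{itemize}
The case $f=0$ is trivial, or excluded, since then there are no negative points.
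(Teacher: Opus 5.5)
Your proposal is correct and follows the paper's own argument. Like the paper, you combine $\lambda_1(\nabla^2 f(x))=6f(x)$ at negative SONC points with $\alpha(f)=6f_\star$ from Corollary~\ref{thm:global-lambda-one}, use Huang's genericity result to upgrade SONC points to strict local minima for the reverse implication, and deduce (ii) and (iii) from (i). The one difference is that in (iii)($\Leftarrow$) the paper avoids your detour through $f(x)>f(y)\ge f_\star$: it uses $\lambda_1(H(x))>\lambda_1(H(y))\ge\min_{\|z\|=1}\lambda_1(H(z))$ directly, so the SONC property of $y$ is not needed there.
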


\begin{proof}
(i) follows from  Corollary \ref{thm:global-lambda-one}, and the fact that
if $f\not\in\mathcal A_3$, all SONC points are (strict) local minima.
(ii) If $f\not\in\mathcal A_3$ then all SONC points are strict local minima, and apply (i). To obtain (iii)
observe that by \eqref{eq:negative-spurious-spectral-3}, necessarily
$\lambda_1(H(x))>\min_{\|z\|=1}\lambda_1(H(z))$, and then invoke (ii).
\end{proof}

\begin{proposition}[Quartic spectral test]
\label{prop:quartic-sign}
Let $f$ be a quartic form and let $x\in\Sph^{n-1}$ satisfy FONC.  Write
$f=f(x)$ and let $\lambda_1\le\cdots\le\lambda_n$ be the eigenvalues of
$\nabla^2f(x)$.  The radial eigenvalue is $12f$.  
For a local minimum, SONC is equivalent to
\begin{equation}\label{eq:quartic-min-cases}
 \begin{cases}
  \lambda_1=12f,\quad \lambda_2\ge4f, & f<0,\\
  \nabla^2f(x)\succeq0\ \text{and }\lambda_1=0, & f=0,\\
  \lambda_1\ge4f, & f>0\,,
 \end{cases}
\end{equation}
while for a local maximum, SONC is equivalent to
\begin{equation}\label{eq:quartic-max-cases}
 \begin{cases}
  \lambda_n\le4f, & f<0,\\
  \nabla^2f(x)\preceq0\ \text{and }\lambda_n=0, & f=0,\\
  \lambda_n=12f,\quad \lambda_{n-1}\le4f, & f>0.
 \end{cases}
\end{equation}
In particular, a negative quartic local minimum satisfies
\begin{equation}\label{eq:quartic-lower-gap}
 \frac{\lambda_2-\lambda_1}{|\lambda_1|}\ge\frac23,
\end{equation}
whereas a positive quartic local maximum satisfies 
\begin{equation}\label{eq:quartic-upper-gap}
 \frac{\lambda_n-\lambda_{n-1}}{|\lambda_n|}\ge\frac23.
\end{equation}
At a negative FONC point with \(\lambda_1=12f\), strict inequality in \eqref{eq:quartic-lower-gap} is equivalent to SOSC for a minimum. At a positive FONC point with \(\lambda_n=12f\), strict inequality in \eqref{eq:quartic-upper-gap} is equivalent to SOSC for a maximum.
\end{proposition}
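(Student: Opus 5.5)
The plan is to work in an orthonormal eigenbasis of $H:=\nabla^2f(x)$ adapted to the splitting $\R^n=\R x\oplus x^\perp$. By Euler's identity \eqref{eq:euler} and FONC, $Hx=3\nabla f(x)=12f\,x$, so $x$ is an eigenvector with radial eigenvalue $12f$. Since $H$ is symmetric, $x^\perp$ is $H$-invariant. Write $\mu_1\le\cdots\le\mu_{n-1}$ for the eigenvalues of $H|_{x^\perp}$. Then the spectrum of $H$ is the multiset $\{12f,\mu_1,\dots,\mu_{n-1}\}$. Exactly as in the if-part of Proposition \ref{prop:lasserre}, the intrinsic Hessian \eqref{eq:intrinsic} equals $H|_{x^\perp}-4fI$, so its eigenvalues are $\mu_i-4f$. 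Hence SONC for a minimum is $\mu_1\ge4f$, and SONC for a maximum is $\mu_{n-1}\le4f$; the strict versions give SOSC. Everything then reduces to translating these conditions on the tangent eigenvalues into conditions on the ordered full spectrum $\lambda_1\le\cdots\le\lambda_n$, splitting by the sign of $f$.

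For the minimum, take $f<0$ first. Then $12f<4f$. If $\mu_1\ge4f$, then $12f$ is the smallest eigenvalue, so $\lambda_1=12f$ and $\lambda_2=\mu_1\ge4f$. Conversely, if $\lambda_1=12f$ and $\lambda_2\ge4f$, we get $\mu_1\ge\lambda_2\ge4f$: removing one copy of $12f$ from the spectrum leaves the $\mu$'s, whose minimum is at least $\lambda_2$. This is just Proposition \ref{prop:lasserre} with $d=4$. When $f=0$, the radial eigenvalue is $0$ and SONC reads $\mu_1\ge0$, which is equivalent to $H\succeq0$; then $0$ is an eigenvalue and it is the smallest, so $\lambda_1=0$. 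The converse is immediate. When $f>0$, we have $12f>4f$, so $\lambda_1=\min(12f,\mu_1)\ge4f$ iff $\mu_1\ge4f$. The maximum case is the mirror image: apply the minimum case to $-f$, which flips all eigenvalues and reverses their order, so $\lambda_1(-H)=-\lambda_n(H)$.

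The gap inequalities come out directly. For $f<0$ we have $|\lambda_1|=-12f$, so $(\lambda_2-\lambda_1)/|\lambda_1|\ge(4f-12f)/(-12f)=2/3$, and the same computation gives the maximum case. For the SOSC statements, note that at a negative FONC point with $\lambda_1=12f$ we again have $\lambda_2=\mu_1$ whenever $\mu_1\ge12f$, and $\mu_1\ge\lambda_1$ always holds. So strict gap $>2/3$ means $\lambda_2>4f$, which means $\mu_1>4f$, which is SOSC; the converse uses the same equalities.

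The only delicate point I expect is this multiset bookkeeping, namely the step $\lambda_2=\mu_1$ once $\lambda_1=12f$ is known. This needs care when $12f$ is a repeated eigenvalue, but in that case $\mu_1=12f$ and the identity still holds.
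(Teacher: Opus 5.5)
Your proposal is correct and follows the paper's argument. Both split the spectrum of $\nabla^2 f(x)$ into the radial eigenvalue $12f$ and the tangent eigenvalues, which become the intrinsic Hessian after the shift by $4f$, and then order them according to the sign of $f$. Your version is just more explicit than the paper's terse proof, notably in the bookkeeping $\lambda_2=\mu_1$ once $\lambda_1=12f$, and in obtaining the maximum case by applying the minimum case to $-f$.
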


\begin{proof}
At FONC, \eqref{eq:euler} gives the radial eigenvalue $12f$, while every
tangent eigenvalue must be at least $4f$ for a minimum and at most $4f$ for a
maximum.  If $f<0$, then $12f<4f$; if $f>0$, then $12f>4f$.  Ordering the
radial and tangent eigenvalues yields \eqref{eq:quartic-min-cases} and
\eqref{eq:quartic-max-cases}.  The two gap thresholds are
$(4f-12f)/|12f|=2/3$ for $f<0$ and
$(12f-4f)/|12f|=2/3$ for $f>0$.
\end{proof}

The genericity result in \cite[Theorem 3.4]{Huang2023} states that, 
outside a proper algebraic exceptional set $\mathcal A_d\subset \mathcal H_{n,d}$ of
degree-$d$ forms, every feasible point satisfying FONC and SONC satisfies
SOSC and is a strict local minimizer.  Thus,
for a generic form, Corollary
\ref{cor:cubic-gap} and Proposition \ref{prop:quartic-sign} provide a complete local-minimum test, up to
the excluded equality degeneracies.  This is a statement about
\emph{local minimality} and absence of spurious minima additionally requires all
SONC points to have the same global value.

\begin{example}[A large gap need not mean globality]\label{ex:diagonal}
For
\begin{equation}\label{eq:diagonal}
 f(x)=\sum_{i=1}^n a_i x_i^3,
 \qquad 0<a_1<\cdots<a_n,
\end{equation}
every $-e_i$ is a strict local minimum with value $-a_i$, but only $-e_n$ is
global.  

At $-e_i$, $(\lambda_1(\nabla^2(-e_i)),\lambda_2(\nabla^2(-e_i)))=(-6a_i,0)$, and hence
$G(-e_i)=1$.  Therefore the $n-1$ spurious minima pass the gap test 
\eqref{eq:cubic-gap} with a large margin.
\end{example}

\begin{remark}[The non-benign class also has interior]
\label{rem:bad-open-set}
The local minima in Example \ref{ex:diagonal} are nondegenerate and have
distinct values.  Hence under arbitrary sufficiently small perturbations 
the coefficients in $\cH_{n,3}$, all of them persist, and their value ordering
persists.  The non-benign class therefore contains a nonempty open subset
of \(\cH_{n,3}\).  In particular, both the robust height family of
\cref{thm:robust-height} and this robust non-benign family are
full-dimensional.  Nonempty interior on either side does not determine which
class has larger normalized volume as \(n\) grows.
\end{remark}

\section{Benign and robustly benign forms}

We first state a result about how the benign class behaves with the dimension
$n$.
\subsection{Deterministic growth of the benign class}

With odd $d\geq 3$ fixed, let the set 
\begin{equation}\label{eq:benign}
 \mathcal G_{n,d}\,:=\{f\in\cH_{n,d}:\text{every spherical local minimum of $f$ is global}\}
 \end{equation}
be  the subset of degree-$d$ benign forms. 
For the phrasing ``$\mathcal G_{n,d}$ grows with $n$'' to be  meaningful, one need to
compare coefficient spaces, and a natural comparison is the zero-extension
\[\iota_n(\mathcal G_{n,d})\subsetneq \mathcal G_{n+1,d}\quad\mbox{with}\quad\iota_nf(x,z)\,:=f(x)\,,\quad\forall (x,z)\in\R^{n+1}\,.\]

\begin{theorem}[Dimension lifting]\label{thm:lifting}
(i) For every $n\ge2$,
\begin{equation}\label{eq:lifting}
 \iota_n(\mathcal G_{n,d})\subsetneq\mathcal G_{n+1,d}.
\end{equation}
(ii) $\mathcal G_{n,d}$ contains a nonempty open subset of $\cH_{n,d}$ and hence
has full parameter dimension $\binom{n+d-1}{d}$.
\end{theorem}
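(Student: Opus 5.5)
The plan is to prove the inclusion in (i) by a direct analysis of local minima of the zero-extension, then obtain strictness from rotation invariance, and finally prove (ii) with a Morse-stability argument applied to a well-chosen odd form. Throughout, $d$ is odd and $g:=\iota_nf$, so $g(x,z)=f(x)$ on $\Sph^{n}$.

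\emph{Inclusion in (i).} Let $f\in\mathcal G_{n,d}$; $f\neq0$ is implicit, since for $d$ odd and $n\geq2$ one has $\mathcal G_{n,d}\cap\{0\}=\emptyset$. By Proposition~\ref{prop:ball}, $\min_{\Sph^n}g=\min_{\cB_n}f=f_\star<0$. Now take a local minimizer $(x,z)$ of $g$ on $\Sph^n$ and write $x=ru$ with $u\in\Sph^{n-1}$ and $r=\|x\|\in[0,1]$, so that $g=r^df(u)$. I split into cases.
\begin{itemize}
\item If $r=0$, then $g=0$, but the nearby points $(\epsilon u',\sqrt{1-\epsilon^2})$ with $f(u')<0$ have negative value. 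So this is not a local minimum.
\item If $r>0$ and $f(u)>0$, decreasing $r$ while increasing $|z|$ stays on the sphere and strictly decreases $g$. So this is impossible.
\item If $f(u)=0$, then either $u$ is a local minimizer of $f$ on $\Sph^{n-1}$, which benignity would force to have value $f_\star<0$, a contradiction; or there are $u'$ arbitrarily close to $u$ with $f(u')<0$, giving nearby points of value $r^df(u')<0$. Either way this is not a local minimum.
\item If $f(u)<0$ and $r<1$, increasing $r$ decreases $g$. Hence $r=1$ and $z=0$.
\end{itemize}
In the last case, perturbing $u$ within $\Sph^{n-1}\times\{0\}$ shows that $u$ is a spherical local minimizer of $f$. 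Benignity then gives $g(x,z)=f(u)=f_\star=\min_{\Sph^n}g$, so $g\in\mathcal G_{n+1,d}$.

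\emph{Strictness in (i).} The key observation is that $\mathcal G_{n+1,d}$ is invariant under $f\mapsto f\circ Q$ for $Q\in O(n+1)$, because $Q$ maps $\Sph^n$ to itself and preserves local and global minima. Take $f=x_1^d\in\mathcal G_{n,d}$. On the sphere, minimizing $u_1^d$ is equivalent to minimizing the linear function $u_1$, whose only local minimizer is $-e_1$. Points with $u_1=0$ are not local minima, since $u_1^d$ takes negative values arbitrarily near them. Now let $Q$ be the rotation sending $e_1$ to $(e_1+e_{n+1})/\sqrt2$. Then $(\iota_nf)\circ Q^{\mathsf T}=\bigl((x_1+z)/\sqrt2\bigr)^d$ lies in $\mathcal G_{n+1,d}$ but genuinely depends on $z$, so it is not in $\iota_n(\mathcal G_{n,d})$.

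\emph{Part (ii).} I would not perturb $x_1^d$: its equator $\{u_1=0\}$ is a degenerate critical set, and spurious minima could appear there under perturbation. Instead I use $f_0(x)=x_1\|x\|^{d-1}\in\cH_{n,d}$, whose restriction to the sphere is the linear function $u_1$. The following are the steps.
\begin{itemize}
\item The restriction $u_1$ is a Morse function on $\Sph^{n-1}$ with exactly two critical points, the minimizer $-e_1$ and the maximizer $e_1$, both nondegenerate. One checks directly that the intrinsic Hessian \eqref{eq:intrinsic} at $\mp e_1$ equals $\pm I$.
\item For $p\in\cH_{n,d}$ with small coefficients, $f_0+p$ is $C^2$-close to $f_0$ on the compact sphere. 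Outside small neighborhoods of $\pm e_1$ the spherical gradient of $f_0$ is bounded away from zero, so it stays nonzero after perturbation.
\item Inside each neighborhood, the implicit function theorem together with persistence of the definiteness of the intrinsic Hessian gives exactly one critical point, of the same type as before.
\end{itemize}
Hence every form in a neighborhood of $f_0$ has exactly one spherical local minimizer, which is therefore global. This gives a nonempty open subset of $\mathcal G_{n,d}$, and thus full dimension $\binom{n+d-1}{d}$.

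The main obstacle I expect is making this uniform stability step rigorous, in particular choosing the neighborhoods and the perturbation size simultaneously. It is routine but is the only genuinely quantitative part of the argument. For $d=3$ one could alternatively cite the robust height family \cref{thm:robust-height}.
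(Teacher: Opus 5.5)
Your proposal is correct and follows the paper's proof essentially step for step. The inclusion uses the same radial, equatorial and polar case analysis of $r^d f(u)$. Part (ii) uses the same anchor $x_1\|x\|^{d-1}$ with the same Morse-stability argument: the implicit function theorem near $\pm e_1$, the spherical gradient bounded away from zero elsewhere, and $C^2$ closeness controlled by the coefficients. The only variation is the strictness witness: the paper takes the height form $z\|(x,z)\|^{d-1}$ directly, and your rotated $\bigl((x_1+z)/\sqrt2\bigr)^d$ works equally well.

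One small correction: the zero form is benign, since every point is a global minimizer, so $0\in\mathcal G_{n,d}$. The case $f\equiv0$ should therefore be disposed of as trivial, as the paper does, rather than claimed to be excluded.
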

For a detailed proof the reader is referred to \S \ref{proof-thm-lifting}.

Theorem \ref{thm:lifting} shows that benignity is preserved under zero-extension;
it does not establish that odd-degree benign forms become more frequent as dimension grows. The coefficient-space dimensions change, and no probability measure on those spaces has been specified. 

\subsection{A characterization of benign forms}

Recall that with $\mathcal A_d\subset\mathcal H_{n,d}$
being the exceptional algebraic set in Huang's genericity result \cite[Theorem 3.4]{Huang2023},
a form $f\in\mathcal H_{n,d}\setminus\mathcal A_d$ is said to be \emph{generic}.
With $f\in\mathcal H_{n,d}$, at any  FONC point $x\in\mathbb{S}^{n-1}$, introduce the real symmetric matrix
\begin{equation}\label{eq:Mf}
 M_f(x):=
 \nabla^2f(x)-d\,f(x)I-d\,(d-2)\,f(x)xx^{\mathsf T}.
\end{equation}
Next, introduce the set:
\begin{equation}\label{eq:spectral-sonc-set}
 \mathcal S(f):= \{x\in\Sph^{n-1}:\:\nabla^2f(x)\,x\,=\,d(d-1)\,f(x)x,\ M_f(x)\succeq0\}\,.
\end{equation}
 \begin{theorem}
 \label{th:suff-cond}
With $d\geq2$, let $f\in\mathcal H_{n,d}$. 
Then at any FONC point $x\in\mathbb{S}^{n-1}$:
\begin{equation}
\label{eq:M_f-carac}
\mathrm{SONC}\quad\Longleftrightarrow \quad\nabla^2_Sf(x)\succeq0
\quad\Longleftrightarrow \quad M_f(x)\succeq0\,,\end{equation}
 independently of the sign of \(f(x)\).  In addition:

(i) The singleton condition $\mathcal S(f)=\{x_\star\}$ for odd-degree forms
is a deterministic sufficient certificate that \(f\) has a unique local minimizer,
hence a global minimizer . 

(ii) The condition $\mathcal S(f)=\{x_\star,-x_\star\}$ for even-degree forms
is a deterministic sufficient certificate that \(f\) has a unique antipodal pair $(x_\star,-x_\star)$
as local minimizer (hence as global minimizer). 

Finally for generic forms, in \eqref{eq:M_f-carac} one may replace SONC by ``$x$ is  local minimizer''.
\end{theorem}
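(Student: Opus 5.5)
The plan is to compute $M_f(x)$ directly in the orthogonal decomposition $\R^n=\mathrm{span}(x)\oplus x^\perp$. The first equivalence in \eqref{eq:M_f-carac} is simply the definition of SONC through the intrinsic Hessian \eqref{eq:intrinsic}. So the work lies in showing $\nabla^2_Sf(x)\succeq0\iff M_f(x)\succeq0$.

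At a FONC point, Euler's identity \eqref{eq:euler} gives $\nabla^2f(x)x=(d-1)\nabla f(x)=d(d-1)f(x)x$. Hence $M_f(x)x=\bigl(d(d-1)-d-d(d-2)\bigr)f(x)x=0$, so $x$ lies in the kernel of $M_f(x)$. Next take $u\in x^\perp$. Because $\nabla^2f(x)$ is symmetric and has $x$ as an eigenvector, it maps $x^\perp$ into itself. Therefore $M_f(x)u=(\nabla^2f(x)-df(x)I)u\in x^\perp$. This shows $M_f(x)$ is block diagonal, with block $0$ on $\mathrm{span}(x)$ and block $\Pi_x(\nabla^2f(x)-df(x)I)\Pi_x|_{x^\perp}=\nabla^2_Sf(x)$ on $x^\perp$. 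Consequently $M_f(x)\succeq0$ if and only if $\nabla^2_Sf(x)\succeq0$. No sign assumption on $f(x)$ enters anywhere, which gives the claimed independence of the sign. The only place needing care is the invariance of $x^\perp$, and symmetry of the Hessian settles it.

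For (i) and (ii), observe that every spherical local minimizer satisfies FONC and SONC, so it lies in $\mathcal S(f)$; the eigen-equation in \eqref{eq:spectral-sonc-set} is implied by FONC through Euler's identity. The global minimum on the compact sphere is attained, and every global minimizer is a local minimizer, hence also lies in $\mathcal S(f)$. If $\mathcal S(f)=\{x_\star\}$, then $x_\star$ must be the global minimizer, and no other local minimizer exists. In the even-degree case, $f(-x)=f(x)$ and $\mathcal S(f)$ is symmetric under $x\mapsto -x$. So if $\mathcal S(f)=\{x_\star,-x_\star\}$, both points are global minimizers and no other local minima exist.

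One subtlety is that $\mathcal S(f)$ imposes only the eigen-equation and not FONC itself. To handle this, I will note that $\nabla^2f(x)x=(d-1)\nabla f(x)$ always holds, so for $d\ge2$ the eigen-equation is exactly $\nabla f(x)=df(x)x$, i.e.\ FONC. Thus $\mathcal S(f)$ is precisely the set of SONC points.

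For the final generic statement, I will invoke Huang's theorem \cite[Theorem 3.4]{Huang2023}. For $f\notin\mathcal A_d$, every FONC+SONC point satisfies SOSC and is a strict local minimizer, while every local minimizer satisfies SONC. Hence SONC is equivalent to local minimality at FONC points. I expect no real obstacle here; the main point is the block decomposition.
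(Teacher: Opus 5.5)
Your proposal is correct and follows essentially the same route as the paper: the block decomposition $M_f(x)=0\oplus\nabla^2_Sf(x)$ on $\mathrm{span}\{x\}\oplus x^\perp$, followed by the argument that every local minimizer lies in $\mathcal S(f)$ and a global minimizer exists (for (i)--(ii)), and Huang's genericity theorem for the final claim. The only cosmetic difference is that you get the vanishing off-diagonal blocks from invariance of $x^\perp$ under the symmetric Hessian, whereas the paper uses $x\in\ker M_f(x)$ and compares the two bilinear forms on $x^\perp$.
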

\begin{proof}
As $x\in\mathbb{S}^{n-1}$ is a FONC point, the radial direction $u=x$ lies in \(\ker M_f(x)\). In addition,
$M_f(x)$ and $\nabla^2_Sf(x)$ define the same bilinear form
on $x^\perp$. Indeed, let $u,v\in x^\perp$. Then as $u^Tx=x^Tv=0$,
\[u^TM_f(x)v\,=\,u^T(\nabla^2f(x)-d\,f(x)\,I)v\,.\]
Next, as $\Pi_xu=u$ and $\Pi_xv=v$
\begin{eqnarray*}
u^T\nabla^2_Sf(x)v&=&u^T\Pi_x(\nabla^2f(x)-d\,f(x)\,I)\Pi_xv\\
&=&u^T(\nabla^2f(x)-d\,f(x)\,I)v\,=\,u^TM_f(x)v\,.
\end{eqnarray*}
In other words, and more precisely, on $\R^n=\mathrm{Span}\{x\}\oplus x^\perp$ one has the block decomposition
\[M_f(x)\,=\,\left[\begin{array}{cc}0&0\\0&\nabla^2_Sf(x)\end{array}\right]\,.\]
But at a FONC point, SONC being equivalent to $\nabla^2_Sf(x)\succeq0$, \eqref{eq:M_f-carac} follows.

The conditions (i) is  then an immediate consequences of \eqref{eq:M_f-carac} 
as a unique  SONC point is necessarily a global minimizer (as the latter always exists and is a SONC point)
and with a similar argument for (ii). The final statement follows from the fact that all SONC points are (strict) local minima 
whenever $f\not\in\mathcal A_d$.
\end{proof}
Observe that in view of \eqref{eq:M_f-carac},
\[\mathcal S(f)\,=\,\{\,x\in\mathbb{S}^{n-1}:\:\mbox{$x$ satisfies FONC and SONC}\,\}\]
because with $d\geq 2$, by Euler's identity, $\nabla^2f(x)x=d(d-1)x\Rightarrow \nabla f(x)=df(x)x$, and so
$x$ satisfies FONC, while by \eqref{eq:M_f-carac} it also satisfies SONC.

Notice that unlike the negative-value gap criterion  in Corollary \ref{cor:gap-depth}, 
formulation \eqref{eq:M_f-carac}  detects SONC points at every objective value, 
including positive or zero-valued spurious local minima.

Another nice feature of \eqref{eq:M_f-carac} is its  purely algebraic nature; indeed
via determinants of all principal minors, the semidefinite constraint  $M_f(x)\succeq0$ can be expressed through
polynomial inequality constraints. As a result, and since $\mathcal S(f)\neq\emptyset$, consider the set 
\[\Delta_f\,:=\,\{(x,y): x,y\in\,\mathcal S(f)\,;\quad f(x)<f(y)\,\}\,.\]
The condition $\Delta_f=\emptyset$, which is sufficient to assure that $f$ is benign,
has a Stengle certificate. See e.g. \cite[Theorem 2.11]{Lasserrebook}. Also observe that
for generic forms, 
\[\mbox{$f$ is benign}\quad \Longleftrightarrow \quad\Delta_f\,=\,\emptyset\,.\]

\subsection{Robustly benign forms}
A benign form is said to be \emph{robustly benign} if it remains benign after (possibly sufficiently small)
perturbation of its coefficients. A clear obstruction is when an odd-degree  benign form
has multiple nondegenerate isolated global minimizers since a sufficiently small generic perturbation preserves these minima while separating their values, and therefore only one of them remains global while the others become spurious.
Hence odd-degree benign forms with a unique isolated global minimizer are good candidate for being robustly benign. 
For even-degree forms, the corresponding obstruction is the existence of more than one distinct 
antipodal pair of nondegenerate global minimizers. 
The following conditions
\[\mathcal S(f)\,=\{\,x_\star\,\},
\qquad \ker M_f(x_\star)\,=\operatorname{span}\{\,x_\star\,\}\,,\]
or equivalently
\begin{equation}
\label{eq:odd-cond}
\mathcal S(f)\,=\{\,x_\star\,\},
\qquad \nabla^2_Sf(x_\star)\,\succ\,0\,,\end{equation}
provide  a useful sufficient certificate for an odd-degree form to be robustly benign.
For even-degree forms, the analogue conditions read
\begin{equation}
\label{eq:even-cond}
\mathcal S(f)\,=\{\,x_\star,-x_\star\,\},
\qquad \nabla^2_Sf(\pm x_\star)\,\succ\,0\,.\end{equation}

\begin{theorem}
\label{thm:cubic-robustness}
With $d$ odd, let $f\in\cH_{n,d}$ and $\mathcal S(f)$ be as in \eqref{eq:spectral-sonc-set}.  Suppose that there exists
$x_\star\in\Sph^{n-1}$ such that
\begin{equation}\label{eq:condition-54}
 \boxed{
 \mathcal S(f)=\{x_\star\},
 \qquad
 \nabla^2_S f(x_\star)\succ0}\,,
\end{equation}
or, equivalently,
\begin{equation}\label{eq:condition-54-M}
 \mathcal S(f)=\{x_\star\},
 \qquad
 \ker M_f(x_\star)=\mathrm{Span}\{x_\star\}.
\end{equation}
Then $f$ is robustly benign.  More precisely, for every coefficient
norm $\norm{\cdot}_{\mathrm{coef}}$ on $\cH_{n,d}$, there exists
$\varepsilon>0$ such that every $\widetilde f\in\cH_{n,d}$ satisfying
\begin{equation}\label{eq:coefficient-neighborhood}
 \norm{\widetilde f-f}_{\mathrm{coef}}<\varepsilon
\end{equation}
has exactly one local minimizer $x(\widetilde f)\in\mathbb{S}^{n-1}$.  This point is
nondegenerate and is the unique global minimizer of $\widetilde f$.
Moreover,
\[
 x(\widetilde f)\longrightarrow x_\star
 \qquad\text{as}\qquad
 \norm{\widetilde f-f}_{\mathrm{coef}}\longrightarrow0.
\]
\end{theorem}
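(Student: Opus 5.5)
The plan is a compactness and perturbation argument on the sphere, built around the set $\mathcal S(\cdot)$ of FONC+SONC points. All norms on the finite-dimensional space $\cH_{n,d}$ are equivalent, so closeness in coefficients controls $f$, $\nabla f$ and $\nabla^2 f$ uniformly on $\Sph^{n-1}$, and we may fix any one coefficient norm.

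\emph{Step 1 (localization).} We first show that SONC points of $\widetilde f$ accumulate only at $x_\star$. Take sequences $f_k\to f$ and $x_k\in\mathcal S(f_k)$. By compactness of the sphere, a subsequence converges to some $\bar x$. The conditions defining $\mathcal S$ are closed in $(f,x)$: the equation $\nabla f_k(x_k)=d f_k(x_k)x_k$ passes to the limit, and $M_{f_k}(x_k)\succeq0$ passes to the limit because $M$ is continuous in $(f,x)$ and the PSD cone is closed. Hence $\bar x\in\mathcal S(f)=\{x_\star\}$. Consequently, for every neighborhood $U$ of $x_\star$ there is $\varepsilon>0$ such that $\|\widetilde f-f\|<\varepsilon$ implies $\mathcal S(\widetilde f)\subset U$.

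\emph{Step 2 (uniqueness near $x_\star$).} Use the implicit function theorem on the map $F(x,\mu,g)=(\nabla g(x)-\mu x,\ \|x\|^2-1)$, which is the Lagrange system whose zeros are the spherical critical points. At $(x_\star,d f(x_\star),f)$, the Jacobian in $(x,\mu)$ is the bordered matrix
\[\begin{pmatrix}\nabla^2 f(x_\star)-d f(x_\star) I & -x_\star\\ 2x_\star^{\mathsf T} & 0\end{pmatrix}.\]
This matrix is invertible exactly when $\nabla^2_S f(x_\star)$ is nonsingular, which holds since $\nabla^2_S f(x_\star)\succ0$ (equivalently, $\ker M_f(x_\star)=\mathrm{Span}\{x_\star\}$). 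The implicit function theorem then gives a neighborhood $U\times V$ of $(x_\star, d f(x_\star))$ and an $\varepsilon$ such that each $\widetilde f$ with $\|\widetilde f-f\|<\varepsilon$ has exactly one critical point $x(\widetilde f)$ in $U$, depending continuously on $\widetilde f$ with $x(f)=x_\star$. Shrinking $\varepsilon$ further, continuity of eigenvalues keeps $\nabla^2_S\widetilde f(x(\widetilde f))\succ0$. To apply Step 1 here, one must first shrink $U$ so that the implicit-function neighborhood controls every critical point of $\widetilde f$ in $U$; then apply Step 1 with this $U$.

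\emph{Step 3 (conclusion).} By Steps 1 and 2, $\mathcal S(\widetilde f)\subseteq\{x(\widetilde f)\}$. A global minimizer of $\widetilde f$ on the compact sphere exists and is a SONC point, so it lies in $\mathcal S(\widetilde f)$. Therefore $\mathcal S(\widetilde f)=\{x(\widetilde f)\}$. Every local minimizer is a SONC point, so $x(\widetilde f)$ is the unique local minimizer; it is nondegenerate by Step 2 and is the unique global minimizer. The convergence $x(\widetilde f)\to x_\star$ follows from continuity in the implicit function theorem, or directly from Step 1.

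The main obstacle is the bookkeeping between Steps 1 and 2: the implicit-function uniqueness only holds inside a fixed neighborhood $U$, so we need $\varepsilon$ small enough that both $\mathcal S(\widetilde f)\subset U$ and implicit-function uniqueness hold in that same $U$. We resolve this by choosing $U$ from the implicit function theorem first and then invoking Step 1 for that $U$. Oddness of $d$ matters only through the hypothesis $\mathcal S(f)=\{x_\star\}$; for even $d$, the point $-x_\star$ would force the antipodal version of the statement.
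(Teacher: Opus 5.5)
Your proof is correct and follows the paper's overall plan: local uniqueness near $x_\star$, exclusion of SONC points away from $x_\star$, then existence of a global minimizer. You carry out both key steps differently, however.

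For the exclusion step, the paper fixes a small geodesic ball $U$ and introduces the margin $\Phi_f(x)=\max\{\|\nabla_S f(x)\|,-\lambda_{\min}(\nabla^2_S f(x))\}$ on the compact set $\Sph^{n-1}\setminus U$. It takes the positive minimum $\eta$ of this margin and uses Weyl's inequality to show that a $C^2$ perturbation of size less than $\eta/3$ has no FONC+SONC point there. You replace this with a qualitative closed-graph argument: limits of points of $\mathcal S(f_k)$ lie in $\mathcal S(f)$. This needs only continuity of $(g,x)\mapsto M_g(x)$ and of the FONC residual.

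For local uniqueness, the paper uses strong geodesic convexity of the perturbed function on $U$, with the implicit function theorem in a chart supplying existence. You get existence, uniqueness and continuous dependence at once from the implicit function theorem applied to the bordered Lagrange system in ambient coordinates. You should state explicitly why uniqueness of $(x,\mu)$ in $U\times V$ gives uniqueness of $x$ in $U$. The reason is that the multiplier of any spherical critical point is forced to be $\mu=d\,\widetilde f(x)$, and this lies in $V$ once $U$ and $\varepsilon$ are small enough.

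Because you treat $\widetilde f$ as a point of the finite-dimensional space $\cH_{n,d}$ from the start, you also avoid the paper's omitted Step 4, which converts $C^2$ closeness into coefficient closeness. In exchange, the paper's route gives explicit margins (the convexity constant on $U$ and the threshold $\eta/3$). It also applies verbatim to arbitrary small $C^2$ perturbations on the sphere, whereas your compactness argument yields only an existential $\varepsilon$.
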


\begin{proof}
The proof proceeds into  four steps.

\medskip
\noindent Step 1: $x_\star$ is the unique local and global minimizer
of $f$.
Every local minimizer satisfies FONC and SONC and therefore belongs to
$\mathcal S(f)$.  Since
$\mathcal S(f)=\{x_\star\}$, the point $x_\star$ is the only possible local
minimizer.  A global minimizer exists by compactness of the sphere, and every
global minimizer is a local minimizer.  Hence $x_\star$ is the unique global
minimizer.  The strict inequality
$\nabla^2_S f(x_\star)\succ0$ makes it a nondegenerate strict global minimum.

\noindent
Step 2: 
Since \(\nabla^2_Sf(x_\star)\succ0\), choose \(\mu>0\) and a sufficiently small strongly geodesically 
convex ball \(U\) centered at \(x_\star\) such that
\[v^T\nabla^2_Sf(x)\,v\,\ge\,3\mu\,\|v\|^2\,,\]
for all \(x\in\overline U\) and \(v\in x^\perp\). Then every sufficiently small \(C^2\) perturbation \(g\) of $f$  satisfies
\[v^T\nabla^2_S(f+g)(x)\,v\,\ge2\,\mu\,\|v\|^2\,\mbox{on \(\overline U\).}\]
Hence \(f+g\) is strongly geodesically convex on \(U\) and has at most one critical point there.
Choose local coordinates \(\psi:V\subset\mathbb R^{n-1}\to U\) with \(\psi(0)=x_\star\), and define
\[\mathcal F(g,z)=\nabla_z\,((f+g)\circ\psi)(z)\,.\]
Then $\mathcal F(0,0)=0$ and $D_z\mathcal F(0,0)=\nabla_z^2(f\circ\psi)(0)$,
which represents the positive-definite intrinsic Hessian
\(\nabla^2_Sf(x_\star)\). By the implicit-function theorem, for every sufficiently small perturbation \(g\), 
there exists a critical point \(x(g)=\psi(z(g))\) near \(x_\star\) of $f+g$. It is the unique critical point in \(U\) and, 
because its intrinsic Hessian is positive definite, it is a nondegenerate strict local minimum.

\medskip
\noindent Step 3: uniform exclusion of local minima away from
$x_\star$.
Let
\[
 K:=\Sph^{n-1}\setminus U.
\]
This set is compact and contains no point of $\mathcal S(f)$.  Define the
continuous function
\begin{equation}\label{eq:exclusion-function}
 \Phi_f(x):=
 \max\left\{
  \norm{\nabla_S f(x)},
  -\lamin\bigl(\nabla^2_S f(x)\bigr)
 \right\},
 \qquad x\in K.
\end{equation}
At every $x\in K$, one has $\Phi_f(x)>0$.  Indeed, if
$\nabla_S f(x)\ne0$, then the first term is positive.  If
$\nabla_S f(x)=0$, then $x\notin\mathcal S(f)$ forces
$\nabla^2_S f(x)\not\succeq0$, so
$\lamin(\nabla^2_S f(x))<0$ and the second term is positive.  By compactness and continuity,
one obtains 
\begin{equation}\label{eq:eta-margin}
 \eta:=\min_{x\in K}\Phi_f(x)>0.
\end{equation}
Next, suppose that
\begin{equation}\label{eq:C2-smallness-exterior}
 \sup_{x\in\Sph^{n-1}}
 \norm{\nabla_S\widetilde f(x)-\nabla_S f(x)}\,<\,\frac\eta3
\quad\mbox{and}\quad
 \sup_{x\in\Sph^{n-1}}
 \norm{\nabla^2_S\widetilde f(x)-\nabla^2_S f(x)}_{\mathrm{op}}
 \,<\,\frac\eta3.
\end{equation}
Fix $x\in K$.  By \eqref{eq:eta-margin}, at least one of the following
alternatives holds.
If $\norm{\nabla_S f(x)}\ge\eta$, then
\[
 \norm{\nabla_S\widetilde f(x)}
 \ge\norm{\nabla_S f(x)}-\frac{\eta}{3}
 \ge\frac{2\eta}{3}>0,
\]
so $x$ is not a critical point of $\widetilde f$. On the other hand, if $-\lamin(\nabla^2_S f(x))\ge\eta$, 
then by Weyl's eigenvalue inequality 
\[
\lambda_{\min}\bigl(\nabla_S^2\widetilde f(x)\bigr)
\le
\lambda_{\min}\bigl(\nabla_S^2f(x)\bigr)
+
\left\|
\nabla_S^2\widetilde f(x)-\nabla_S^2f(x)
\right\|_{\mathrm{op}}\,.
\]
Combining with \eqref{eq:C2-smallness-exterior} yields
\[
 \lamin(\nabla^2_S\widetilde f(x))
 \le\lamin(\nabla^2_S f(x))+\frac{\eta}{3}
 \le-\frac{2\eta}{3}<0.
\]
Thus $x$ is not a SONC point for $\widetilde f$ and cannot be a local minimizer.
Consequently, $\widetilde f$ has no local minimizer on $K$.
Combining Steps 2 and 3, $x(\widetilde f)$ is the unique spherical local
minimizer of $\widetilde f$.  A global minimizer exists by compactness and
is necessarily a local minimizer.  It must therefore be
$x(\widetilde f)$, which proves that $\widetilde f$ is benign and has a
unique global minimizer.

\medskip
\noindent
Step 4: from $C^2$ perturbations to coefficient
perturbations. This is similar to step 4 in the proof of Theorem \ref{thm:lifting} in
\S \ref{proof-thm-lifting} and is omitted.
\end{proof}

\begin{remark}
(a) Condition \eqref{eq:condition-54} provides with a directly usable deterministic
sufficient test.  The theorem does not assert that every robustly benign
cubic must satisfy \eqref{eq:condition-54}.  Establishing a converse would
require a separate analysis of whether every additional degenerate SONC
point can be converted into a spurious local minimum by an arbitrarily small
cubic coefficient perturbation.

(b) A similar result holds for quartic (and even-degree) forms. 
But every even-degree form, including every
even-degree coefficient perturbation, satisfies
$F(-x) = F(x)$. Hence the two continued critical points are antipodal and have the same value.
This antipodal symmetry creates an additional technicality in the proof. 

\end{remark}

\section{Explicit benign cubic families}

\subsection{A full-dimensional robust height neighborhood}
\label{subsec:robust-height}

The qualitative interior assertion in Theorem \ref{thm:lifting} admits an explicit
and dimension-independent intrinsic criterion.  
With \(a\in\R^n\setminus\{0\}\) is associated the \emph{height} cubic form 
$f_0(x):=(a^Tx)\Vert x\Vert^2$ and the height family of cubics
$\{(a^Tx)\Vert x\Vert^2\}_{a\in\R^n\setminus\{0\}}$.
Next, let \(q\in\cH_{n,3}\) be arbitrary, and consider the cubic form
\begin{equation}\label{eq:height-perturbation}
 x\mapsto f(x)\,:=(a^{\mathsf T}x)\|x\|^2+q(x)\,.
\end{equation}
On the unit sphere the reference function (or height function) is  \(x\mapsto f_0(x):=a^{\mathsf T}x\). Of course if  $q=0$ then  minimizing 
$f$ on $\mathbb{S}^{n-1}$ is equivalent to minimizing $f_0$ on $\mathbb{S}^{n-1}$ (hence 
on $\mathcal B_n$),  a convex optimization problem. However, when $q\neq0$
the problem is not convex anymore. Define
\begin{equation}\label{eq:height-deltas}
 \delta_1:=
 \sup_{x\in\Sph^{n-1}}\|\nabla_S q(x)\|,
 \qquad
 \delta_2:=
 \sup_{x\in\Sph^{n-1}}\|\nabla^2_S q(x)\|_{\mathrm{op}}.
\end{equation}

\begin{theorem}[Explicit robust height family]\label{thm:robust-height}
If
\begin{equation}\label{eq:height-condition}
 \delta_1^2+\delta_2^2<A^2,
\end{equation}
then $f$ in \eqref{eq:height-perturbation} has exactly one local minimizer
\(x_\star\in\mathbb{S}^{n-1}\), hence its unique global minimizer.  In addition, letting 
\( \kappa:=\sqrt{A^2-\delta_1^2}\)\quad\mbox{with $A=\Vert a\Vert$}\,,
one has
\begin{equation}\label{eq:height-cap-curvature}
 a^{\mathsf T}x_\star\le-\kappa,
 \qquad
 \nabla^2_S f(x_\star)\,
 \succeq\,(\kappa-\delta_2)I_{x_\star^\perp}\succ0.
\end{equation}
In addition, the conclusion is stable under \emph{arbitrary} sufficiently small cubic
coefficient perturbations, and not just small perturbations that preserve 
the cubic family $\{(a^Tx)\Vert x\Vert^2\}_{0\neq a\in\R^n}$.
\end{theorem}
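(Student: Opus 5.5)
The plan is to work entirely with the restriction of $f$ to $\Sph^{n-1}$. Its Riemannian gradient and Hessian split additively into the height part $f_0$ and the perturbation $q$. I will use them to show that every SONC point is trapped in a spherical cap around $-a/A$ on which $f$ is strongly geodesically convex. On the sphere $f_0$ coincides with the linear function $x\mapsto a^{\mathsf T}x$, so
\[
 \nabla_S f_0(x)=a-(a^{\mathsf T}x)x,\qquad \nabla^2_S f_0(x)=-(a^{\mathsf T}x)\,I_{x^\perp}.
\]
The first formula gives $\norm{\nabla_S f_0(x)}^2=A^2-(a^{\mathsf T}x)^2$. As a consistency check on the second, compute $\Pi_x(\nabla^2 f_0(x)-3f_0(x)I)\Pi_x$ from $\nabla^2 f_0(x)=2(ax^{\mathsf T}+xa^{\mathsf T})+2(a^{\mathsf T}x)I$; this reproduces $-(a^{\mathsf T}x)I$ on $x^\perp$. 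I will interpret $\nabla_S q$ and $\nabla^2_S q$ in \eqref{eq:height-deltas} as the Riemannian gradient and Hessian, which agree with \eqref{eq:intrinsic} at critical points. One point needs care here and is the first thing I would pin down: the intrinsic Hessian must be the genuine Riemannian Hessian at non-critical points as well, because the convexity argument below uses it along whole geodesics. I will handle this by defining $\nabla^2_S$ via the Weingarten correction $-(x^{\mathsf T}\nabla g(x))I$ for any form $g$. This reduces to \eqref{eq:intrinsic} at FONC points and is additive in $g$.

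\emph{Step 1 (localization).} Let $x$ satisfy FONC and SONC. From $\nabla_S f_0(x)=-\nabla_S q(x)$ we get $A^2-(a^{\mathsf T}x)^2\le\delta_1^2$, so $|a^{\mathsf T}x|\ge\kappa$. Condition \eqref{eq:height-condition} gives $\delta_2^2<A^2-\delta_1^2=\kappa^2$, that is, $\kappa>\delta_2$. If $a^{\mathsf T}x\ge\kappa$, then Weyl's inequality yields
\[
 \lamin(\nabla^2_S f(x))\le -a^{\mathsf T}x+\delta_2\le-\kappa+\delta_2<0,
\]
contradicting SONC. Hence every point of $\mathcal S(f)$ lies in the cap $C:=\{x\in\Sph^{n-1}:a^{\mathsf T}x\le-\kappa\}$. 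On that cap Weyl again gives
\[
 \nabla^2_S f(x)\succeq(-a^{\mathsf T}x-\delta_2)I\succeq(\kappa-\delta_2)I\succ0,
\]
which already proves \eqref{eq:height-cap-curvature} for any SONC point.

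\emph{Step 2 (uniqueness and existence).} The cap $C$ is a closed geodesic ball centred at $-a/A$ with radius $\arccos(\kappa/A)<\pi/2$, so it is geodesically convex. Suppose $x\ne y$ are two points of $\mathcal S(f)$. Both lie in $C$, and so does the minimizing geodesic $\gamma$ joining them. Along $\gamma$, the function $t\mapsto f(\gamma(t))$ has second derivative $\dot\gamma^{\mathsf T}\nabla^2_S f(\gamma)\dot\gamma>0$, so it is strictly convex. Its derivative vanishes at both endpoints, which is impossible for a strictly convex function. Hence $|\mathcal S(f)|\le1$. A global minimizer exists by compactness and belongs to $\mathcal S(f)$, so $\mathcal S(f)=\{x_\star\}$, and Theorem~\ref{th:suff-cond}(i) gives that $x_\star$ is the unique local, hence global, minimizer. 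It is nondegenerate by Step 1, and $a^{\mathsf T}x_\star\le-\kappa$.

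\emph{Step 3 (robustness).} The conditions \eqref{eq:condition-54} now hold, so Theorem~\ref{thm:cubic-robustness} applies directly: every cubic sufficiently close to $f$ in coefficient norm, not necessarily of the form $(a^{\mathsf T}x)\norm{x}^2+q$, has a unique nondegenerate local minimizer near $x_\star$. Alternatively, $\delta_1$ and $\delta_2$ depend continuously on $q$, so the strict inequality \eqref{eq:height-condition} persists for small changes of $q$. Since any cubic perturbation $p$ of $f$ can be absorbed as $q\mapsto q+p$, this gives robustness with the explicit bounds as well.

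The main obstacle is Step 2: it requires the Hessian bound on the whole cap, not just at critical points, hence the Riemannian-Hessian convention discussed at the start. It also requires verifying that the cap is geodesically convex, which is what uses $\kappa>0$.
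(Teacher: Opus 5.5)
Your proposal is correct and follows the paper's proof essentially step by step. FONC forces $|a^{\mathsf T}x|\ge\kappa$, SONC together with $\kappa>\delta_2$ excludes $a^{\mathsf T}x\ge\kappa$, and the bound $\nabla^2_S f\succeq(\kappa-\delta_2)I$ on the whole cap $\{a^{\mathsf T}x\le-\kappa\}$ of radius $\arccos(\kappa/A)<\pi/2$ leaves room for at most one critical point. Your explicit treatment of robustness goes slightly beyond the paper, which only indicates it in the remark after the proof: either invoke Theorem~\ref{thm:cubic-robustness}, or more directly absorb the perturbation into $q$ and use continuity of $\delta_1,\delta_2$. Your worry about the Hessian convention is unnecessary: Euler's identity gives $x^{\mathsf T}\nabla g(x)=3g(x)$ at every point of the sphere, so \eqref{eq:intrinsic} already coincides with the Riemannian Hessian everywhere, not only at FONC points.
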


\begin{proof}
On the sphere let \(s:=a^{\mathsf T}x\).  The height function satisfies
\[
 \nabla_S(a^{\mathsf T}x)\,=\,\Pi_xa,\qquad
 \nabla^2_S(a^{\mathsf T}x)\,=\,-sI_{x^\perp}\,.
\]
If \(x\) is a local minimizer, FONC yields
\(\Pi_xa\,=\,-\nabla_S q(x)\), hence using $a=(a^Tx)x+\Pi_xa$,
\[
 A^2=s^2+\|\Pi_xa\|^2\le s^2+\delta_1^2
\]
and therefore \(|s|\ge\kappa\).  Next, SONC yields
\( -sI+\nabla^2_S q(x)\succeq0\), and for a unit tangent vector \(v\),
\[
 0\le-s+v^{\mathsf T}\nabla^2_S q(x)v\le-s+\delta_2,
\]
so that \(s\le\delta_2\).  Condition \eqref{eq:height-condition} is equivalent to
\(\kappa>\delta_2\), and consequently the alternative \(s\ge\kappa\) is
impossible.  Every local minimizer therefore belongs to
\[
 \mathcal C_-:=\{x\in\Sph^{n-1}:a^{\mathsf T}x\le-\kappa\}\,.
\]
On the unit sphere, the geodesic distance between two points 
\(c,x\in\mathbb S^{n-1}\) is $d_S(c,x)=\arccos(c^{\mathsf T}x)$,
and \(\arccos\) is decreasing on \([-1,1]\).
So $\mathcal C_-$ is a convex spherical cap of radius
\(\arccos(\kappa/A)<\pi/2\), hence strongly geodesically convex. Next,
\[
 \nabla^2_S f(x)
 =-(a^{\mathsf T}x)I+\nabla^2_S q(x)
 \succeq(\kappa-\delta_2)I\succ\,0\,,\quad\forall x\in\mathcal C_-\,,\]
i.e., $f$ is strictly (and even strongly) geodesic convex, which combined
with strong geodesic convexity of $\mathcal C_-$, implies that 
$\mathcal C_-$ contains at most one critical
point.  A global minimizer exists and, being a local minimizer, lies in the
cap.  As a result it is  the unique local (hence global) minimizer.
\end{proof}
Observe that 
the singleton sufficient condition in Theorem \ref{th:suff-cond}(i)
is satisfied and \eqref{eq:height-cap-curvature} is an instance
of  the sufficient condition for robustness stated in \eqref{eq:odd-cond}.
In addition, \eqref{eq:height-condition} is an explicit condition on the form $q$
for $f$ to be benign or for $f_0$ to be robustly benign.

Importantly, we emphasize that small enough perturbations 
(controlled by \eqref{eq:height-condition})
provide benign cubic forms $f$ in \eqref{eq:height-perturbation} that 
are outside the original height family 
$\{(a^Tx)\Vert x\Vert^2\}_{a\in \R^n\setminus\{0\}}$ of cubic forms. \\

Also the intrinsic condition \eqref{eq:height-condition} has a direct certificate in terms of tensor coefficients.  Write
\( q(x)=E[x,x,x]\) for a symmetric third-order tensor and set
\begin{equation}\label{eq:betaE}
 \beta(E):=
 \sup_{\|x\|=\|u\|=\|v\|=1}|E[x,u,v]|.
\end{equation}
Since
\[
 \nabla q(x)=3E[x,x,\cdot]\,,\quad
 \nabla^2q(x)=6E[x,\cdot,\cdot]\,,\mbox{ and }
 \nabla^2_S q(x)\, =
 \left.\Pi_x\bigl(\nabla^2q(x)-3q(x)I\bigr)
 \Pi_x\right|_{x^\perp},
\]
one obtains \( \delta_1\le3\beta(E)\) and \(\delta_2\le9\beta(E)\).
\begin{corollary}[An explicit tensor ball]\label{cor:height-tensor-ball}
For
\[
 f(x)=(a^{\mathsf T}x)\|x\|^2+E[x,x,x],
\]
either of the sufficient conditions
\begin{equation}\label{eq:height-tensor-bounds}
 \beta(E)<\frac{\|a\|}{\sqrt{90}},
 \qquad\text{or (the more conservative)}\qquad
 \|E\|_{\mathrm F}<\frac{\|a\|}{\sqrt{90}},
\end{equation}
guarantees that $f$ has a unique spherical local and global minimizer.
\end{corollary}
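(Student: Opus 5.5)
The plan is to reduce the corollary to Theorem \ref{thm:robust-height}: it suffices to show that either bound in \eqref{eq:height-tensor-bounds} forces $\delta_1^2+\delta_2^2<A^2$ with $A=\|a\|$, where $q(x)=E[x,x,x]$. Just before the statement we recorded $\delta_1\le 3\beta(E)$ and $\delta_2\le 9\beta(E)$. I would first check these carefully. From $\nabla q(x)=3E[x,x,\cdot]$ and $\|\Pi_x\|\le1$ one gets $\|\nabla_S q(x)\|=\|\Pi_x\nabla q(x)\|\le 3\beta(E)$. For the Hessian, take a unit $v\in x^\perp$. Then $v^{\mathsf T}\nabla^2_S q(x)v=6E[x,v,v]-3E[x,x,x]$, whose absolute value is at most $6\beta+3\beta=9\beta$. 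The operator norm of the symmetric operator $\nabla^2_Sq(x)$ equals the supremum of this quadratic form, so $\delta_2\le 9\beta(E)$.

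With these bounds, $\delta_1^2+\delta_2^2\le (9+81)\beta(E)^2=90\,\beta(E)^2$. The first condition $\beta(E)<\|a\|/\sqrt{90}$ then gives $\delta_1^2+\delta_2^2<A^2$, which is exactly \eqref{eq:height-condition}. Theorem \ref{thm:robust-height} then yields a unique spherical local minimizer, which is also the unique global minimizer.

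For the second, more conservative condition, I would show $\beta(E)\le\|E\|_{\mathrm F}$. View $E[x,u,v]$ as $\langle E,x\otimes u\otimes v\rangle$ in the Frobenius inner product. By Cauchy--Schwarz its absolute value is at most $\|E\|_{\mathrm F}\,\|x\otimes u\otimes v\|_{\mathrm F}=\|E\|_{\mathrm F}$ for unit vectors. So $\|E\|_{\mathrm F}<\|a\|/\sqrt{90}$ implies the first condition, and the conclusion follows as before.

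None of the steps is a real obstacle. The only point needing care is the $\delta_2$ estimate: one must keep the $-3q(x)I$ shift in the intrinsic Hessian, which is why the constant is $9$ and not $6$. One should also note that $\|\cdot\|_{\mathrm{op}}$ of a symmetric operator is controlled by its quadratic form.
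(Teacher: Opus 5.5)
Your proof is correct and matches the paper's argument: you combine $\delta_1\le3\beta(E)$ and $\delta_2\le9\beta(E)$ to get $\delta_1^2+\delta_2^2\le90\beta(E)^2<\|a\|^2$, invoke Theorem~\ref{thm:robust-height}, and then use $\beta(E)\le\|E\|_{\mathrm F}$ (Cauchy--Schwarz) for the second condition. You also explicitly verify the two $\delta$-bounds, keeping the $-3q(x)I$ shift, which by Euler's identity is valid at every point of the sphere; the paper only states these bounds just before the corollary.
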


\begin{proof}
The first inequality implies
\(\delta_1^2+\delta_2^2\le90\beta(E)^2<A^2\).
The second follows from \(\beta(E)\le\|E\|_{\mathrm F}\).
\end{proof}

In Corollary \ref{cor:height-tensor-ball} 
the ball around the tensor \(f_0\) associated with $(a^Tx)\Vert x\Vert^2$ is full-dimensional.
The perturbation \(E\) may contain every
cubic monomial.  At the unperturbed minimizer \(x_\star=-a/\|a\|\), the
spectrum of the ambient Hessian begins with 
\(\lambda_1(\nabla^2f_0(x_\star)),\lambda_2(\nabla^2f_0(x_\star))=(-6A,-2A)\), so
the (local) smallest-eigenvalue gap in \eqref{eq:G} reads
\begin{equation}\label{eq:height-gap}
 G(x_\star)=\frac23>\frac12\,,
\end{equation}
which  is consistent with Corollary \ref{cor:cubic-gap}, but does not by itself prove
that $f_0$ is robustly benign: the
global exclusion of other minima comes from \eqref{eq:height-tensor-bounds}.

\begin{remark}[All odd degrees]\label{rem:odd-degree-height}
Theorem \ref{thm:robust-height} is not specific to cubics.  For odd \(d=2k+1\), consider
\[
 f_d(x)=(a^{\mathsf T}x)\|x\|^{d-1}+q_d(x),
 \qquad q_d\in\cH_{n,d}.
\]
Its restriction to the sphere is \(a^{\mathsf T}x+q_d(x)\).  With
\(\delta_1,\delta_2\) defined as in \eqref{eq:height-deltas} for \(q_d\), the
same proof as that of Theorem \ref{thm:robust-height}
shows that
\[
 \delta_1^2+\delta_2^2<\|a\|^2
\]
implies a unique spherical local and global minimizer.  Thus the 
height family $\{(a^\mathsf Tx)\Vert x\Vert^{d-1}\}_{0\neq a\in\R^n}$ 
is our first example  of a degree-independent odd-homogeneous
family that is robustly benign.
\end{remark}

\subsection{Eikonal cubics}

The following  identity 
\begin{equation}\label{eq:eikonal}
 \|\nabla f(x)\|^2=9\,c^2\,\|x\|^4  \qquad(x\in\R^n)
\end{equation}
is the eikonal equation  for cubic forms studied by Tkachev \cite{Tkachev2010} 
in order to classify its solutions; it provides with  a particularly transparent
class for our purpose.

\begin{theorem}[Eikonal cubics have no spurious minima]\label{thm:eikonal}
Suppose a cubic form satisfies \eqref{eq:eikonal} 
for some $c>0$.  Then every spherical critical value is either $-c$ or $c$.
Consequently every local minimum is global with value $-c$.
\end{theorem}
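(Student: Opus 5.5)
At a spherical critical point $x\in\Sph^{n-1}$, FONC gives $\nabla f(x)=3f(x)x$, so $\|\nabla f(x)\|^2=9f(x)^2$. Comparing with the eikonal identity \eqref{eq:eikonal} at $\|x\|=1$ yields $9f(x)^2=9c^2$. Hence $f(x)=\pm c$, and every spherical critical value lies in $\{-c,c\}$. This first step is immediate.

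For the second claim, we show that no local minimizer has value $+c$. A cubic is odd, so $\min_{\Sph^{n-1}}f=-\max_{\Sph^{n-1}}f$. Both extrema are critical values, and $f\not\equiv0$ because $\nabla f\neq0$ on the sphere. Therefore $f_\star=-c$ and $\max f=c$. In particular, every point with value $c$ is a global maximizer. It remains to show that no point $x$ with $f(x)=c$ is a local minimizer.

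We argue by contradiction and suppose $x$ is such a point. Since $x$ is a global maximizer, SONC for a maximum gives $u^{\mathsf T}(\nabla^2f(x)-3cI)u\le0$ on $x^\perp$. If $x$ were also a local minimizer, SONC for a minimum would give the reverse inequality. Hence $\nabla^2f(x)=3cI$ on $x^\perp$, so $f$ would be constant to second order near $x$ on the sphere. The key step is to exclude this using the eikonal identity. Differentiating $\|\nabla f\|^2=9c^2\|x\|^4$ gives
\[
 \nabla^2f(x)\,\nabla f(x)=18c^2\|x\|^2x.
\]
This only recovers the radial eigenvalue $6c$, so we need a second-order identity instead. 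Differentiating once more and evaluating at $x$ gives
\[
 \nabla^2f(x)^2+\textstyle\sum_k \partial_k f(x)\,\nabla^2\partial_k f(x)=9c^2\bigl(2\|x\|^2I+4xx^{\mathsf T}\bigr).
\]
At $x$, the second term equals $3c\,\nabla^2(\partial_x f)$, where $\partial_x f=x^{\mathsf T}\nabla f$ restricted suitably, and this is a constant matrix equal to $6c\,T_f[x,\cdot,\cdot]$-type terms. On $x^\perp$, with $\nabla^2 f=3cI$, the $(\cdot)^2$ term contributes $9c^2I$. The second term equals $3c\cdot 6T_f[x,\cdot,\cdot]=3c\,\nabla^2f(x)=9c^2 I$. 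So the left side is $18c^2I$, which matches $18c^2I$ on the right. This gives no contradiction, so I expect this to be the main obstacle.

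As a fallback, I would use the global structure: the negative gradient flow of $f$ restricted to the sphere. Every point with $f\in(-c,c)$ is regular, since the critical values are only $\pm c$. Suppose a local minimizer $x$ has $f(x)=c$. Then $f\ge c$ on a neighborhood, while $f\le c$ everywhere, so $f\equiv c$ on an open subset of the sphere. Since $f$ is a polynomial, analyticity on the connected sphere forces $f\equiv c$ on $\Sph^{n-1}$. This contradicts $\min f=-c<c$ with $c>0$. This analyticity argument avoids the Hessian computation entirely and is the route I would actually write. Thus every local minimum has value $-c=f_\star$, so it is global.
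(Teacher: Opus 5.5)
Your proof is correct, but it takes a somewhat different route from the paper's.

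\emph{The paper's route.} It works with the tangential gradient. On the sphere,
$\|\nabla_S f\|^2=\|\nabla f-3f(x)x\|^2=9(c^2-f^2)$. This gives the pointwise bound $|f|\le c$, and it shows that the spherical critical set is \emph{exactly} $\{f=c\}\cup\{f=-c\}$. The global extremal values are then read off from the bound.

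\emph{Your route, first part.} You use FONC only in the direction ``critical $\Rightarrow f=\pm c$''. You then obtain $\min f=-c$ and $\max f=c$ from compactness and oddness. This is shorter, but less informative: you do not learn that every point of the two level sets is critical, nor that $|f|\le c$ holds pointwise independently of oddness.

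\emph{Your route, second part.} Your identity-theorem step supplies something the paper's proof leaves implicit. Saying that the level $\{f=c\}$ is globally maximal does not by itself exclude a point there from also being a nonstrict local minimum. Your argument does exclude it: such a point would force $f\equiv c$ on an open subset of the connected sphere $\Sph^{n-1}$, hence everywhere by real-analyticity, contradicting $\min f=-c<c$. This is precisely what justifies the word ``consequently'' in the statement.

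The Hessian computation you abandoned is correct but, as you found, inconclusive. The second-order identity is automatically consistent with the compression of $\nabla^2 f(x)$ to $x^\perp$ being $3cI$, since $x$ is an eigenvector and $\sum_k x_k\,\partial_k\nabla^2 f=\nabla^2 f$. It should simply be dropped from the final write-up.
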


\begin{proof}
On the unit sphere, Euler's identity yields
\begin{equation}\label{eq:grad-eikonal}
 \|\nabla_S f\|^2
 =\|\nabla f-3f(x)x\|^2=9\,(c^2-f^2).
\end{equation}
Thus $|f|\le c$ and in addition,  the spherical critical points are exactly the level sets
$f=\pm c$.  The first level is globally minimal and the second globally
maximal.
\end{proof}
Also consider the cubic
\begin{equation}\label{eq:chebyshev}
(t,y)\mapsto  f(t,y)\,:=c\bigl(t^3-3t\|y\|^2\bigr)
\end{equation}
which satisfies \eqref{eq:eikonal}. As on the sphere $t^2+\Vert y\Vert^2=1$, it equals
$c(4t^3-3t)=cT_3(t)$ where $T_3$ is the degree-$3$  Chebyshev polynomial of the first kind.
This example illustrates  that ``no spurious minima'' does not
mean ``single minimizer'': Indeed besides the south pole, the 
points $(t,y)$ with $t=1/2$ and $\|y\|=\sqrt3/2$ 
are also global minimizers.  Tkachev's
classification \cite{Tkachev2010} shows that, after scaling and rotation,
solutions of \eqref{eq:eikonal} are of the type \eqref{eq:chebyshev} or one of the four
exceptional Cartan cubics in dimensions $5,8,14,26$.

\subsection{An axial family with exact benign criterion}

Inspired by the eikonal cubics \eqref{eq:chebyshev}, consider the family of cubic forms 
\begin{equation}\label{eq:axial}
 f(t,y)=t\bigl(\alpha t^2+y^{\mathsf T}By\bigr),
 \qquad \alpha>0,\quad B=B^{\mathsf T}\succ0,
\end{equation}
on the sphere $\{(t,y): t^2+\|y\|^2=1\}$. Let us call it the \emph{axial family}
as it singles out the single variable $t$ (axis $e_1$).

The axial familes and reducible eikonal example \eqref{eq:chebyshev} share a same spirit: both
consist of cubics given by a linear form times a quadratic form.
However, as we next see, they are genuinely different families.  The eikonal
example \eqref{eq:chebyshev} corresponds to
\[
 \alpha=c,\qquad B=-3cI\prec0,
\]
whereas \eqref{eq:axial} assumes $B\succ0$.  Even 
with $B\succeq0$ \eqref{eq:axial} would not include \eqref{eq:chebyshev}.  Conversely,
the spectral inequality \eqref{eq:axial-threshold} below is a benign-landscape
criterion; it neither forces $B$ to be scalar nor implies the rigid eikonal
identity \eqref{eq:eikonal}.  Thus, under the hypotheses used here, neither
family should be viewed as a specialization of the other.

\begin{theorem}[Sharp threshold for benign  axial]
\label{thm:axial}
The cubic (axial) form \eqref{eq:axial} has no spurious spherical local minimum if and only if
\begin{equation}\label{eq:axial-threshold}
 \lambda_{\min}(B)\le\frac{3\alpha}{2}.
\end{equation}
More precisely:
\begin{enumerate}
 \item the north pole $(1,0)$ is a strict, necessarily spurious, local minimum
 if and only if $\lambda_{\min}(B)>3\alpha/2$; at equality it is not a local
 minimum;
 \item the south pole $(-1,0)$ is a local minimum if and only if
 $\lambda_{\max}(B)\le3\alpha/2$, and is then global;
 \item if $\lambda_{\max}(B)>3\alpha/2$, the global minima
 are
 \begin{equation}\label{eq:axial-minimizers}
  t=-\sqrt{\frac{\beta}{3(\beta-\alpha)}},
  \quad y\in E_\beta,
  \quad \|y\|^2=1-t^2,
  \qquad \beta=\lambda_{\max}(B).
 \end{equation}
\end{enumerate}
\end{theorem}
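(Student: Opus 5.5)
The plan is to enumerate all spherical critical points explicitly, working in an eigenbasis of $B$ so that $B=\mathrm{diag}(b_1,\dots,b_{n-1})$ with $0<b_i$. First I will write FONC $\nabla f=3f\,x$ for $x=(t,y)$. Since $\partial_tf=3\alpha t^2+y^{\mathsf T}By$ and $\nabla_yf=2tBy$, the conditions are
\[
 3\alpha t^2+y^{\mathsf T}By=3ft,\qquad 2tBy=3fy .
\]
If $t=0$, then $f=0$, and the first equation forces $y^{\mathsf T}By=0$, which is impossible since $B\succ0$. Hence $t\neq0$. If $y=0$, we get the two poles $(\pm1,0)$ with values $\pm\alpha$. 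Otherwise $By=by$ with $b=3f/(2t)$, so $y\in E_b$. Substituting $f=t(\alpha t^2+b(1-t^2))=2bt/3$ gives
\[
 t^2=\frac{b}{3(b-\alpha)} .
\]
This forces $b>\alpha$, and $t^2<1$ forces $b>3\alpha/2$ strictly. So the mixed critical points come in two branches $t=\pm\sqrt{b/(3(b-\alpha))}$, one for each eigenvalue $b>3\alpha/2$, with values $f=2bt/3$.

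Next I will compare values on the negative branch, where $f=-\tfrac23 b\sqrt{b/(3(b-\alpha))}$. The logarithmic derivative of $b^3/(b-\alpha)$ is $(2b-3\alpha)/(b(b-\alpha))>0$ for $b>3\alpha/2$, and at $b=3\alpha/2$ the value is exactly $-\alpha$. So these values are strictly below $-\alpha$ and strictly decreasing in $b$. If $\lambda_{\max}(B)>3\alpha/2$, the minimum is therefore attained exactly at the negative-branch points with $b=\beta=\lambda_{\max}(B)$, which is \eqref{eq:axial-minimizers}. If $\lambda_{\max}(B)\le3\alpha/2$, no mixed points exist and the global minimum is the south pole with value $-\alpha$.

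Then I will check second-order conditions, using Proposition \ref{prop:quartic-sign}-style reasoning but directly via SONC $u^{\mathsf T}(\nabla^2f-3fI)u\ge0$ on $x^\perp$. The Hessian blocks are $f_{tt}=6\alpha t$, $f_{ty}=2By$, $f_{yy}=2tB$. At $(\pm1,0)$ the tangent space is the $y$-space, and the restricted matrix is $\pm(2B-3\alpha I)$. Thus the north pole is a strict local minimum iff $\lambda_{\min}(B)>3\alpha/2$; in that case it has value $\alpha>0$ and is spurious. The south pole is SONC iff $\lambda_{\max}(B)\le3\alpha/2$.

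The degenerate equality cases need a higher-order check, which I expect to be the main delicate step. At the north pole with $\lambda_{\min}(B)=3\alpha/2$, I restrict to the great circle through $e_1$ and an eigenvector of $\lambda_{\min}$. There $f$ becomes
\[
 h(t)=\alpha(3t-t^3)/2 ,
\]
with $h'(t)>0$ for $|t|<1$, so $f$ strictly decreases when leaving $t=1$, and the north pole is not a local minimum. At the south pole with $\lambda_{\max}(B)\le 3\alpha/2$ the point is global, since it is the only negative critical point, so it is a local minimum.

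Finally I will rule out the remaining mixed points as local minima. For a positive-branch point, the great circle in the plane spanned by $e_1$ and $y$ gives $h(t)=(\alpha-b)t^3+bt$, an odd cubic with negative leading coefficient. Its interior positive critical point is a local maximum in $t$, so the point is not a local minimum. For a negative-branch point with $b<\lambda_{\max}(B)$, I keep $t$ fixed and rotate $y$ within the sphere of radius $\sqrt{1-t^2}$ toward an eigenvector of $\lambda_{\max}(B)$. This increases $y^{\mathsf T}By$, and since $t<0$ it strictly decreases $f$, so the point is not a local minimum. Assembling these cases gives items (1)--(3). The only possible spurious minimum is then the north pole, which occurs iff $\lambda_{\min}(B)>3\alpha/2$, proving \eqref{eq:axial-threshold}.
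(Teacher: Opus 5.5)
Your proof is correct and follows essentially the paper's route: the critical-point equation $t^2=b/(3(b-\alpha))$ forcing $b>3\alpha/2$, the pole Hessians $\pm(2B-3\alpha I)$, a great-circle check at the north-pole equality case, meridian curvature to exclude the $t>0$ branch, and moving $y$ toward $E_\beta$ to exclude the $t<0$ branch with $b<\lambda_{\max}(B)$ (you use an exact descent rotation where the paper uses the transverse curvature $2t(c-b)\|w\|^2$). The one real variation is how you locate the global minimum: you compare critical values across eigenvalues via the monotonicity of $b^3/(b-\alpha)$, using that a global minimizer exists and is critical, whereas the paper fixes $t<0$, applies Rayleigh--Ritz to $y^{\mathsf T}By$, and minimizes the scalar cubic $\psi_\beta(t)=\beta t+(\alpha-\beta)t^3$ on $[-1,0]$; both work, and yours settles the south-pole equality case $\lambda_{\max}(B)=3\alpha/2$ more cleanly, since the poles are then the only critical points.
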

 For a detailed proof the interested reader is referred to \S \ref{proof-thm-axial}.

\begin{corollary}[Full-coefficient robustness]
\label{cor:axial-robust}
Suppose that the axial cubic \eqref{eq:axial} satisfies
\begin{equation}\label{eq:strict-axial-robust}
 \lambda_{\max}(B)<\frac{3\alpha}{2}.
\end{equation}
Then, for every fixed norm \(\|\cdot\|_{\rm coef}\) on \(\cH_{n,3}\), there
exists \(\varepsilon>0\) such that every cubic form \(\widetilde f\) satisfying
\[
 \|\widetilde f-f\|_{\rm coef}<\varepsilon
\]
has exactly one spherical local minimum, and that point is its unique global
minimum.  The perturbation is arbitrary in \(\cH_{n,3}\); in particular,
\(\widetilde f\) need not have the axial form \eqref{eq:axial}.
\end{corollary}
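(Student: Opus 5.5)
The plan is to derive the corollary from Theorem \ref{thm:cubic-robustness}. It suffices to check condition \eqref{eq:condition-54} for the axial cubic $f$ at $x_\star=(-1,0)$, the south pole. Once $\mathcal S(f)=\{x_\star\}$ and $\nabla^2_Sf(x_\star)\succ0$ are established, Theorem \ref{thm:cubic-robustness} gives, for any coefficient norm, an $\varepsilon>0$ such that every $\widetilde f$ with $\norm{\widetilde f-f}_{\rm coef}<\varepsilon$ has a unique spherical local minimizer, which is its unique global minimizer. Nothing in that theorem requires $\widetilde f$ to be axial.

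\emph{Step 1: nondegeneracy at the south pole.} At $x_\star=(-1,0)$ we have $f(x_\star)=-\alpha$ and $\nabla f(x_\star)=(3\alpha,0)=3f(x_\star)x_\star$, so $x_\star$ is a FONC point. The ambient Hessian has $tt$-entry $6\alpha t=-6\alpha$, $ty$-block $2By=0$, and $yy$-block $2tB=-2B$. On $x_\star^\perp=\{0\}\times\R^{n-1}$ this gives
\[
\nabla^2_Sf(x_\star)=-2B-3f(x_\star)I=3\alpha I-2B,
\]
which is positive definite exactly when $\lambda_{\max}(B)<3\alpha/2$, i.e. under \eqref{eq:strict-axial-robust}.

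\emph{Step 2: $\mathcal S(f)=\{x_\star\}$.} This step needs the full classification of SONC points, so I would reuse the critical-point analysis behind Theorem \ref{thm:axial}. FONC reads
\[
3\alpha t^2+y^{\mathsf T}By=3f\,t,\qquad 2tBy=3f\,y.
\]
I would split into two cases.
\begin{enumerate}
\item[(a)] $y=0$. Then the point is a pole. The north pole is excluded because $\lambda_{\min}(B)\le\lambda_{\max}(B)<3\alpha/2$, so by Theorem \ref{thm:axial}(1) it fails SONC. One checks directly that $\nabla^2_S f=2B-3\alpha I$ has a negative eigenvalue there.
\item[(b)] $y\neq0$. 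Then $y$ is an eigenvector of $B$ with eigenvalue $\mu=3f/(2t)$, and $t\ne0$ since $f\neq0$ when $By\ne0$. Substituting $y^{\mathsf T}By=\mu(1-t^2)$ gives $t^2=\mu/(3(\mu-\alpha))$. This lies in $(0,1)$ only if $\mu>3\alpha/2$, which contradicts $\mu\le\lambda_{\max}(B)<3\alpha/2$. (If $t=0$, then $y^{\mathsf T}By=0$, which is impossible since $B\succ0$.)
\end{enumerate}
Hence the only FONC points are the two poles, and the only SONC point is $x_\star$. Theorem \ref{thm:axial}(2),(3) is consistent with this: the set \eqref{eq:axial-minimizers} is empty, and the south pole is the global minimizer.

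The main obstacle is Step 2. Theorem \ref{thm:axial} as stated concerns local minima rather than all SONC points, while $\mathcal S(f)$ also contains degenerate SONC points that need not be minima. I would therefore redo the short FONC computation above instead of quoting the theorem. This is the only place where the strict inequality \eqref{eq:strict-axial-robust}, rather than $\le$, is essential: at equality, Step 1 degenerates and a continuum of FONC points appears near the south pole, so the argument would fail.
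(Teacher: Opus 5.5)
Your proposal is correct and follows essentially the paper's route: you rule out nonpolar solutions of the Lagrange system, find $\nabla^2_S f=2B-3\alpha I\prec0$ at the north pole and $3\alpha I-2B\succ0$ at the south pole, and then apply a perturbation argument, which you make precise by verifying the hypotheses of Theorem~\ref{thm:cubic-robustness} (the paper only says ``similar arguments''). One correction to your closing remark: at equality $\lambda_{\max}(B)=3\alpha/2$ no continuum of FONC points appears, since $t^2=\mu/(3(\mu-\alpha))$ still has no solution with $0<t^2<1$; what fails is the nondegeneracy $\nabla^2_S f(x_\star)\succ0$, and, when $B=\tfrac{3\alpha}{2}I$, also $\mathcal S(f)=\{x_\star\}$, because the north pole then has $\nabla^2_S f=0$.
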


\begin{proof}
Under \eqref{eq:strict-axial-robust}, equation
\eqref{eq:lagrange}
has no nonpolar solution.  Thus the only critical
points of \(f|_{\Sph^{n-1}}\) are the two poles $p_+=(1,0)$ and $p_-(-1,0)$.  Their intrinsic Hessians are
\[\nabla^2_S f(1,0)\,=\,
 2B-3\alpha I\prec\,0\,;
  \qquad
 \nabla^2_S f(-1,0)\,=\,3\alpha I-2B\succ\,0\,.
\]
They are therefore a nondegenerate maximum and a nondegenerate minimum,
respectively.  With similar arguments as those used in the proof of Theorem \ref{thm:robust-height}
but now using the condition \eqref{eq:strict-axial-robust}, the perturbed cubic thus has exactly one local minimum
(hence global  minimum).
\end{proof}

\begin{remark}[The height cubic as a strict axial member]
Recall the height cubic form  \(f(x)=(a^{\mathsf T}x)\|x\|^2\).
After a rotation such that \(a=Ae_1\), where \(A=\|a\|>0\),
\[
 f(t,y)=A\,t(t^2+\|y\|^2),
\]
a member of the axial family \eqref{eq:axial} with  \(\alpha=A\) and \(B=AI\).  Consequently
\(\lambda_{\max}(B)=A<3A/2\), and by Corollary \ref{cor:axial-robust},
one retrieves that the height cubic  is robustly benign.
More generally, Corollary \ref{cor:axial-robust} provides 
robust benign neighborhoods around every strict axial member, not merely
around this isotropic example.  
A generic nearby cubic loses the factorization
\((a^{\mathsf T}x)(x^{\mathsf T}Qx)\) but remains benign.
\end{remark}

Both Theorem \ref{thm:axial} and Corollary \ref{cor:axial-robust} 
illustrate the useful role of the Hessian linear pencil: a global
landscape statement becomes a comparison of the extremal eigenvalues of one
symmetric matrix $B$ with the scalar threshold $3\alpha/2$.

\subsection{A general cubic pencil family}

While after an orthogonal change of variables, the axial family reduces to a
constant diagonal matrix and a scalar parameter, one now introduces
a cubic family for which several coefficient matrices need \emph{not} commute. Namely,
let $k\ge2$, $m\ge1$, write $t\in\R^k$, $y\in\R^m$, and let
$B_1,\ldots,B_k\in\R^{m\times m}$ be symmetric.  Define
\begin{equation}\label{eq:noncommuting-pencil}
 B(t)=\sum_{i=1}^k t_iB_i,
 \qquad
 \rho_B=\sup_{\|v\|=1}\|B(v)\|_{\rm op},
\end{equation}
and consider the cubic form
\begin{equation}\label{eq:noncommuting-cubic}
 f(t,y)=t_1(\|t\|^2+\|y\|^2)+y^{\mathsf T}B(t)y.
\end{equation}

\begin{theorem}[A noncommuting-pencil benign family]
\label{thm:noncommuting-pencil}
If $\rho_B\le1/2$, the restriction of \eqref{eq:noncommuting-cubic} to
$\Sph^{k+m-1}$ has exactly two critical points,
\begin{equation}\label{eq:noncommuting-poles}
 x_-=(-e_1,0),\qquad x_+=(e_1,0).
\end{equation}
They are its unique global minimum and maximum, with values $-1$ and $1$, and
therefore $f$ has no spurious local minimum (i.e. is benign).

If $\rho_B<1/2$, both extrema are nondegenerate. In addition, 
in the full space of homogeneous cubics in $k+m$ variables, 
$f$ belongs to the interior of the
class having exactly one local minimum and one local maximum.
\end{theorem}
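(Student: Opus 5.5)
The plan is to solve the spherical FONC system of \eqref{eq:noncommuting-cubic} directly and show that every critical point has $y=0$. Write $x=(t,y)$ with $\|t\|^2+\|y\|^2=1$, and put $b(y):=(y^{\mathsf T}B_1y,\ldots,y^{\mathsf T}B_ky)\in\R^k$, so that $t^{\mathsf T}b(y)=y^{\mathsf T}B(t)y$. The ambient gradient is
\[
\nabla_t f=e_1(\|t\|^2+\|y\|^2)+2t_1t+b(y),\qquad \nabla_y f=2t_1y+2B(t)y .
\]
FONC \eqref{eq:fonc-d} with $d=3$ then reads $e_1+2t_1t+b(y)=3f\,t$ and $B(t)y=(\tfrac32f-t_1)\,y$.

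\textbf{Step 1: a critical point with $y\neq0$ is impossible.} Suppose $y\ne0$. The $y$-equation says that $y$ is an eigenvector of $B(t)$ with eigenvalue $\mu:=\tfrac32 f-t_1$. By homogeneity of $B$ and the definition of $\rho_B$, this gives $|\mu|\le\rho_B\|t\|$. Since $3f-2t_1=2\mu$, the $t$-equation becomes
\[
e_1=2\mu\,t-b(y).
\]
We also have $\|b(y)\|=\sup_{\|v\|=1}y^{\mathsf T}B(v)y\le\rho_B\|y\|^2$. Taking norms gives
\[
1\le 2|\mu|\|t\|+\|b(y)\|\le \rho_B\bigl(2\|t\|^2+\|y\|^2\bigr).
\]
With $\rho_B\le\tfrac12$ the right-hand side is at most $\|t\|^2+\tfrac12\|y\|^2=1-\tfrac12\|y\|^2<1$, which is a contradiction. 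This estimate is the heart of the proof. It is the step I expect to require care: one must use the eigen-relation $\mu=\tfrac32f-t_1$ to absorb the term $3f\,t$, rather than bounding $f$ crudely.

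\textbf{Step 2: the poles.} If $y=0$, then $\|t\|=1$ and $f=t_1$, so the $t$-equation reduces to $e_1=t_1t$. Hence $t=\pm e_1$, and the only critical points are $x_\pm$, with values $\pm1$. A global minimizer and a global maximizer exist by compactness and are critical points. Therefore $x_-$ is the unique global minimizer and $x_+$ the unique global maximizer, and there is no spurious local minimum.

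\textbf{Step 3: nondegeneracy and interior.} Near $x_\mp$, expand on the sphere using $t_1=\mp\sqrt{1-\|s\|^2-\|y\|^2}$ with $s=(t_2,\ldots,t_k)$:
\[
f=\mp\Bigl(1-\tfrac12\|s\|^2-\tfrac12\|y\|^2\Bigr)\mp y^{\mathsf T}B_1y+O(3).
\]
This yields
\[
\nabla^2_Sf(x_-)=\mathrm{diag}\bigl(I_{k-1},\,I_m-2B_1\bigr),\qquad \nabla^2_Sf(x_+)=-\nabla^2_Sf(x_-).
\]
Since $\|B_1\|_{\rm op}\le\rho_B<\tfrac12$, we get $\nabla^2_Sf(x_-)\succ0$ and $\nabla^2_Sf(x_+)\prec0$. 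In particular $x_+$ fails SONC, so $\mathcal S(f)=\{x_-\}$ and $\ker M_f(x_-)=\mathrm{Span}\{x_-\}$. Theorem \ref{thm:cubic-robustness} then gives a coefficient neighborhood on which every cubic has exactly one local minimum. Applying the same theorem to $-f$, whose unique SONC point is $x_+$ with positive definite intrinsic Hessian, gives a neighborhood on which every cubic has exactly one local maximum. Intersecting the two neighborhoods proves the interior claim.
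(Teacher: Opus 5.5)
Your proof is correct, and its central step takes a different route from the paper's.

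\textbf{Excluding nonpolar critical points.} The paper never solves the FONC system. It takes the height $u=t_1$ and computes $\langle\nabla_S f,\nabla_S u\rangle=1-u^2+y^{\mathsf T}B(e_1-3ut)y$ at \emph{every} point of the sphere. It then bounds this from below by $(1-u^2)\bigl(1-\rho_B\sqrt{1+3u^2}\bigr)>0$ whenever $|u|<1$. You work only at critical points instead. The $y$-block makes $y$ an eigenvector of $B(t)$ with eigenvalue $\mu=\tfrac32 f-t_1$, which eliminates $f$ from the $t$-block. Everything then reduces to the single norm inequality $1\le\rho_B(1+\|t\|^2)$, and the case $y=0$ is settled separately by $e_1=t_1t$. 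Your route is shorter and purely algebraic. Your warning that bounding $f$ crudely would fail is accurate: the naive estimate exceeds $1$ near $\|t\|^2=2/3$ when $\rho_B=1/2$.

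\textbf{What the paper's route buys.} It gives a pointwise, quantitative statement. Since $\|\nabla_S u\|=\sqrt{1-u^2}$, Cauchy--Schwarz yields $\|\nabla_S f(x)\|\ge\sqrt{1-u^2}\,\bigl(1-\rho_B\sqrt{1+3u^2}\bigr)$ off the poles. So $t_1$ strictly decreases along the spherical gradient-descent flow of $f$. This bound could also make explicit the gradient margin that the paper's perturbation step obtains by compactness.

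\textbf{The interior claim.} Your argument rules out only exact critical points and yields no such margin. You therefore rely on Theorem~\ref{thm:cubic-robustness} for the interior claim, and that is legitimate. You verified its hypotheses: $\mathcal S(f)=\{x_-\}$ and $\nabla^2_S f(x_-)\succ0$.

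\textbf{Two small remarks.} First, applying that theorem a second time to $-f$ is unnecessary. Every cubic satisfies $\widetilde f(-x)=-\widetilde f(x)$, so $x\mapsto-x$ maps local minima of $\widetilde f$ bijectively onto its local maxima. Exactly one local minimum therefore already gives exactly one local maximum. Second, in the boundary case $\rho_B=1/2$, your Step~3 does not cover the claim that $x_+$ is not a local minimum, which is needed for ``no spurious minimum''. It follows at once from the $-I_{k-1}$ block of $\nabla^2_S f(x_+)$. That block is independent of $B$ and is nontrivial because $k\ge2$. The paper leaves this step implicit as well.
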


\begin{proof}
On the sphere $\mathbb{S}^{k+m-1}$, $\Vert t\Vert^2+\Vert y\Vert^2=1$.
Set $u=t_1$ and $q(t,y)=y^{\mathsf T}B(t)y$  so that $f=u+q$ on $\mathbb{S}^{m+k-1}$.
At every critical point $\nabla_Sf(t,y)=0$ and in particular
$\langle\nabla_S f,\nabla_S u\rangle=0$. We next prove that 
whenever $|u|<1$ and $\rho_B\le1/2$, $\langle\nabla_S f,\nabla_S u\rangle\neq0$,
and therefore \emph{no} nonpolar point can be a critical point of $f$.

Observe that $\nabla_S u=e_1-ux$ and since $q$ is homogeneous of degree three,
\begin{align}
 \langle\nabla_S f,\nabla_S u\rangle
 &=1-u^2+\partial_{t_1}q-3uq \notag\\
 &=1-u^2+y^{\mathsf T}B(e_1-3ut)y.       \label{eq:height-derivative}
\end{align}
By linearity and using \eqref{eq:noncommuting-pencil},  
$\|B(v)\|_{\rm op}\le\rho_B\|v\|$.  Moreover,
\begin{equation}\label{eq:pencil-vector-bound}
 \|e_1-3ut\|^2
 =1-6u^2+9u^2\|t\|^2
 \le1+3u^2,
 \qquad
 \|y\|^2\le1-u^2.
\end{equation}
Indeed
\begin{eqnarray*}
\|e_1-3ut\|^2&=&(e_1-3ut)^{\mathsf T}(e_1-3ut)\\
&=&\|e_1\|^2-6u\,e_1^{\mathsf T}t+9u^2\|t\|^2.
\end{eqnarray*}
Since $\|e_1\|=1$ and $e_1^{\mathsf T}t=t_1=u$,
one obtains $\|e_1-3ut\|^2=1-6u^2+9u^2\|t\|^2$.
Finally, as \((t,y)\in\mathbb S^{k+m-1}\),  $\|t\|^2\le1$ and
therefore,
\[1-6u^2+9u^2\|t\|^2\,\le  1-6u^2+9u^2\,=\,1+3u^2.\]
Consequently,
\begin{equation}\label{eq:height-monotonicity}
 \langle\nabla_S f,\nabla_S u\rangle
 \ge(1-u^2)\bigl(1-\rho_B\sqrt{1+3u^2}\bigr)>0
\end{equation}
whenever $|u|<1$ and $\rho_B\le1/2$.  
Thus no nonpolar point is critical.
At the two poles all first derivatives of $q$ vanish because $y=0$, so the
poles are critical.  Compactness now implies that they are the unique global
extrema.

At $x_-$ and $x_+$, respectively, the intrinsic  Hessians in tangent
coordinates $(t_2,\ldots,t_k,y)$ are
\begin{equation}\label{eq:noncommuting-spherical-hessians}
 \nabla^2_S f(x_-)=I_{k-1}\oplus(I_m-2B_1),
 \qquad
 \nabla^2_S f(x_+)=-I_{k-1}\oplus(-I_m+2B_1).
\end{equation}
They are positive and negative definite when $\rho_B<1/2$.  Choose disjoint
neighborhoods of the poles on which these signs persist.  The spherical
gradient has a positive minimum on their compact complement.  Every
sufficiently small $C^2$ perturbation therefore has no critical point on that
complement, while the implicit function theorem continues exactly one
critical point near each pole.  Coefficient convergence of fixed-degree forms
implies $C^2$ convergence on the sphere, proving openness in the full cubic
coefficient space.  In particular the allowed perturbations need not retain
the partially quadratic structure of \eqref{eq:noncommuting-cubic}.
\end{proof}

In Theorem \ref{thm:noncommuting-pencil} there is no simultaneous diagonalization result in disguise.  For
$k=m=2$, take
\begin{equation}\label{eq:pauli-pencil}
 B_1=\varepsilon
 \begin{pmatrix}1&0\\0&-1\end{pmatrix},
 \qquad
 B_2=\varepsilon
 \begin{pmatrix}0&1\\1&0\end{pmatrix},
 \qquad 0<\varepsilon\le\frac12.
\end{equation}
Then $[B_1,B_2]\ne0$, $\rho_B=\varepsilon$, and
\begin{align}
 f={}&t_1(t_1^2+t_2^2+y_1^2+y_2^2) \notag\\
 &+\varepsilon\bigl[t_1(y_1^2-y_2^2)+2t_2y_1y_2\bigr].
 \label{eq:pauli-cubic}
\end{align}
The matrices of the pencil $A(t)=t_1I+B(t)$ have eigenvalues
\begin{equation}\label{eq:pauli-eigenvalues}
 \lambda_\pm(A(t))=t_1\pm\varepsilon\|t\|.
\end{equation}
Their eigenvectors vary with $t$, and no constant orthogonal transformation
diagonalizes both coefficient matrices.  However, Theorem \ref{thm:noncommuting-pencil} states
that \eqref{eq:pauli-cubic} has a unique
local and global minimum.
Finally, the family fits the spectral characterization
\eqref{eq:cubic-gap} in Corollary \ref{cor:cubic-gap}.  Put
\begin{equation}\label{eq:noncommuting-beta}
 \beta=\max\{0,\lambda_{\max}(B_1)\}.
\end{equation}
At $x_-$ the ambient Hessian is
\begin{equation}\label{eq:noncommuting-ambient-hessian}
 \nabla^2f(x_-)
 =[-6]\oplus[-2I_{k-1}]\oplus[-2I_m-2B_1].
\end{equation}
Since $k\ge2$ and $0\le\beta\le\rho_B\le1/2$,
\begin{equation}\label{eq:noncommuting-gap}
 \lambda_1=-6,\qquad
 \lambda_2=-2-2\beta,\qquad
 G(x_-)=\frac{2-\beta}{3}
 \ge\frac{2-\rho_B}{3}\ge\frac12.
\end{equation}
The inequality is strict when $\rho_B<1/2$, consistently with SOSC.  At the
endpoint equality can occur although the global minimum remains unique; this
is one of the nongeneric degeneracies excluded by Huang's genericity result \cite{Huang2023}.

\begin{remark}[The open class is larger than the pencil family]
The last assertion of Theorem \ref{thm:noncommuting-pencil} is essential.  The
matrices $B_i$ parametrize a comparatively small algebraic subset of the full
space of cubic coefficients.  If $\rho_B<1/2$, however, there exists
$\delta(f)>0$ such that every cubic $\widetilde f$ satisfying
\begin{equation}\label{eq:ambient-cubic-ball}
 \|\widetilde f-f\|_{C^2(\Sph^{k+m-1})}<\delta(f)
\end{equation}
has exactly one spherical local minimum and one local maximum, both global.
The perturbation in \eqref{eq:ambient-cubic-ball} is completely unrestricted:
it may contain any cubic monomials, need not remain quadratic in $y$, and need
not be representable by the original pencil.  Since for fixed degree and dimension, 
coefficient norms and $C^2$-norms are equivalent, the same statement
holds for a sufficiently small perturbation in the full
coefficient space. 

However, one  recognizes that this family remains a relatively small perturbation of a height function. Its significance is chiefly that the matrix pencil is genuinely noncommuting and that the benignity conclusion is stable in the full coefficient space. 
\end{remark}

\begin{corollary}[Convex-ball formulation and local descent]
\label{cor:noncommuting-ball-algorithm}
Under the strict hypothesis $\rho_B<1/2$, the cubic $f$ in
\eqref{eq:noncommuting-cubic} has exactly two KKT points for minimization on
the Euclidean unit ball:
\begin{equation}\label{eq:noncommuting-ball-kkt}
 0\qquad\text{and}\qquad x_-=(-e_1,0).
\end{equation}
The point $x_-$ is the unique global minimizer, whereas $0$ is not a local
minimum.  Therefore any feasible descent algorithm whose objective values are
nonincreasing, which starts from a point $x_0$ with $f(x_0)<0$ and converges to
a KKT point, necessarily converges to $x_-$.  The same conclusion holds for
every sufficiently small ambient cubic coefficient perturbation as in 
\eqref{eq:ambient-cubic-ball}, with its associated  global minimizer replacing
$x_-$.
\end{corollary}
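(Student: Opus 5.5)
We use Proposition~\ref{prop:cubic-ball-kkt} to turn the ball problem into a question about spherical critical points, and Theorem~\ref{thm:noncommuting-pencil} to list those critical points.

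\textbf{Boundary KKT points.} By Proposition~\ref{prop:cubic-ball-kkt}, a boundary point $x\in\Sph^{k+m-1}$ is a KKT point of the ball problem exactly when it is a spherical FONC point with $f(x)\le0$. Theorem~\ref{thm:noncommuting-pencil} says the only spherical critical points are $x_\pm=(\pm e_1,0)$, with $f(x_\pm)=\pm1$. Hence $x_-$ is the only boundary KKT point, with multiplier $\lambda^*=3/2$.

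\textbf{Interior KKT points.} Again by Proposition~\ref{prop:cubic-ball-kkt}, an interior KKT point satisfies $\nabla f(x)=0$. We show that $0$ is the only such point. Suppose $x\neq0$ with $\nabla f(x)=0$, and set $u=x/\|x\|$. By homogeneity $\nabla f(u)=0$, and Euler's identity gives $f(u)=0$. Then $\nabla f(u)=3f(u)u$ holds trivially, so $u$ is a spherical critical point with value $0$. This contradicts the fact that the only critical values are $\pm1$. The origin is always a KKT point but not a local minimum, again by Proposition~\ref{prop:cubic-ball-kkt}.

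\textbf{Global minimality of $x_-$.} Proposition~\ref{prop:ball} shows the ball minimum is attained on the sphere. The spherical global minimum is at $x_-$, so $x_-$ is the unique global minimizer on the ball.

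\textbf{Descent algorithms.} A feasible descent method with nonincreasing values started at $f(x_0)<0$ has all its iterates in $\{f\le f(x_0)\}$. By continuity its limit has value at most $f(x_0)<0$. The limit is a KKT point by assumption, so it cannot be $0$, where $f=0$. It is therefore $x_-$.

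\textbf{Perturbed forms.} This is the step we expect to need most care, because Theorem~\ref{thm:noncommuting-pencil} only gives openness of the class "one local minimum and one local maximum", not of "no other critical points". We would reuse its proof. For sufficiently small $C^2$ perturbations $\widetilde f$:
\begin{itemize}
\item near each pole, exactly one critical point is continued by the implicit function theorem, with value close to $\pm1$;
\item on the compact complement of the two neighbourhoods, the spherical gradient stays bounded away from zero, so there are no critical points there.
\end{itemize}
Hence $\widetilde f$ has exactly two spherical critical points, with values near $-1$ and $+1$. The argument above then applies verbatim:
\begin{itemize}
\item the boundary KKT points with $\widetilde f\le0$ reduce to the continued minimizer;
\item any nonzero interior KKT point would give a spherical critical point with value $0$, which is excluded since both critical values are near $\pm1$;
\item the descent argument is unchanged.
\end{itemize}
Coefficient and $C^2$ norms are equivalent for forms of fixed degree and dimension, so the same conclusion holds for small coefficient perturbations.
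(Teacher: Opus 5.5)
Your proof is correct and follows the paper's argument: you get the boundary KKT points from Proposition~\ref{prop:cubic-ball-kkt} and Theorem~\ref{thm:noncommuting-pencil}, exclude nonzero interior KKT points by homogeneity and Euler's identity, and conclude with the value-based descent argument. Your perturbation step is more careful than the paper's one-line ``follows from the same argument'': you rightly note that the perturbed form must have exactly two spherical critical points, not merely one local minimum and one local maximum, and that this comes from the proof of Theorem~\ref{thm:noncommuting-pencil} rather than its statement.
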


\begin{proof}
By Theorem \ref{thm:noncommuting-pencil}, the only critical points are
$x_-$ and $x_+$, with values $-1$ and $1$.  By
Proposition \ref{prop:cubic-ball-kkt}, $x_-$ is the only boundary KKT point.  If a nonzero
interior point $z$ satisfied $\nabla f(z)=0$, homogeneity would imply that
$z/\|z\|$ is a spherical critical point of value zero, a contradiction.
Thus the origin is the sole interior KKT point.  A nonincreasing sequence
starting with a negative value cannot converge to the zero-valued origin, so
any KKT limit must be $x_-$.  The perturbation statement follows from the
same argument.
\end{proof}

\begin{remark}[No hidden ambient convexity on the negative region]
The conclusion of Corollary \ref{cor:noncommuting-ball-algorithm} is \emph{not} a consequence
of convexity of the objective, even after restricting attention to the region
$\{x:f(x)<0\}$ in which the descent argument operates.  For every cubic form,
$\nabla^2f(-x)=-\nabla^2f(x)$, so no nonzero cubic form can be convex on a
centrally symmetric neighborhood of the origin.  For the present family this
failure is especially explicit at its global minimizer: under $\rho_B<1/2$,
the matrix in \eqref{eq:noncommuting-ambient-hessian} is negative definite.
Thus the key behind Corollary is indeed 
a benign KKT landscape on a convex feasible set.
\end{remark}

The ball is a convex feasible set and its Euclidean projection is explicit,
\begin{equation}\label{eq:ball-projection}
 \Pi_{\cB_n}(z)=\frac{z}{\max\{1,\|z\|\}},
\end{equation}
so projected-gradient, trust-region, and related feasible methods are
easily implementable.
Moreover, a
negative initialization is easy to obtain: by oddness, replacing any
$z$ with $f(z)\ne0$ by the appropriate sign provides with  a point with negative value.
Thus, for the robustly benign family, the convex
ball formulation converts convergence to a KKT point under descent from a
negative starting point into a global-optimization guarantee.  It does not turn an
arbitrary cubic problem into a convex optimization problem.

\section{A quartic family with an exact benignity criterion}

Let $C=C^{\mathsf T}\in\R^{(n-1)\times(n-1)}$, $\rho\in\R$, and  consider
the family of quartic forms
\begin{equation}\label{eq:quartic-family}
 (t,y)\mapsto f_{\rho,C}(t,y)\,:
 =\rho(t^2+\|y\|^2)^2+t^2y^{\mathsf T}Cy\,,
 \end{equation}
 indexed by $(\rho,C)$. For any real scalar $\tau$, let  $E_\tau(C):=\mathrm{Ker}(C-\tau\,I)$.

\begin{theorem}[Semidefinite-coupling characterization]
\label{thm:quartic-family}
The quartic $f_{\rho,C}$ has no spurious local minima on $\Sph^{n-1}$ if and
only if
\begin{equation}\label{eq:quartic-benign-iff}
 C\succeq0\qquad\text{or}\qquad C\preceq0.
\end{equation}
Writing $\kappa=\lambda_{\min}(C)$, its global minimum is
\begin{equation}\label{eq:quartic-global-value}
 f_\star\,=\,\min_{\Sph^{n-1}}f_{\rho,C}
 =\rho+\frac14\min\{0,\kappa\}\,.
\end{equation}
If $C\succeq0$, the global minimizers are the spherical points for which
$t=0$ or $y\in\ker C$.  If $\kappa<0$, they are exactly
\begin{equation}\label{eq:quartic-global-points}
 t^2=\frac12,\qquad \|y\|^2=\frac12,\qquad
 y\in E_\kappa.
\end{equation}
When $C$ is indefinite, every point $(0,v)$ with
$v^{\mathsf T}Cv>0$ provides a nonstrict spurious local minimizer with value $\rho$.
\end{theorem}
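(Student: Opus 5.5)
On $\Sph^{n-1}$ one has $t^2+\|y\|^2=1$, so $f_{\rho,C}=\rho+g$ with $g(t,y):=t^2\,y^{\mathsf T}Cy$. The constant $\rho$ shifts all values equally, so every landscape question (global value, global minimizers, spurious minima) reduces to $g$ on the sphere. I will use the polar-type parametrization $(t,y)=(\cos\varphi,\ \sin\varphi\, w)$ with $w\in\Sph^{n-2}$. It is a smooth local chart away from the loci $\{t=0\}$ and $\{y=0\}$, and in it
\[
 g=\tfrac14\sin^2(2\varphi)\,w^{\mathsf T}Cw .
\]

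\emph{Global value and minimizers.} Since $t^2\|y\|^2\le\frac14$, with equality iff $t^2=\|y\|^2=\frac12$, and $w^{\mathsf T}Cw\ge\kappa$, we get $g\ge\frac14\min\{0,\kappa\}$. This bound is attained at $t=0$ when $\kappa\ge0$, and at the points \eqref{eq:quartic-global-points} when $\kappa<0$, which gives \eqref{eq:quartic-global-value}. Equality analysis of the two inequalities then yields the minimizer sets. If $\kappa<0$, one needs $\sin^2 2\varphi=1$ and $w\in E_\kappa$. If $C\succeq0$, one needs $g=0$, i.e.\ $t=0$, or $y=0$, or $y^{\mathsf T}Cy=0\Leftrightarrow Cy=0$; the case $y=0$ is contained in $y\in\ker C$.

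\emph{Sufficiency of \eqref{eq:quartic-benign-iff}.}
\begin{itemize}
\item[$C\succeq0$.] At any point with $g>0$ we have $t\neq0$, $y\neq0$ and $c:=w^{\mathsf T}Cw>0$. Moving $\varphi$ toward $0$ or $\pi$ with $w$ fixed strictly decreases $\frac14\sin^2(2\varphi)c$. Hence every local minimum has value $0=\min g$.
\item[$C\preceq0$.] If $C=0$ everything is trivial, so assume $\kappa<0$. I will show that every point that is not in \eqref{eq:quartic-global-points} admits a descent curve.
  \begin{itemize}
  \item[(a)] If $t\ne0$, $y\ne0$ and $c<0$: if $\sin^2 2\varphi<1$, increase it. Otherwise descend in $w$. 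A local minimizer of the Rayleigh quotient $w^{\mathsf T}Cw$ on $\Sph^{n-2}$ is global, since quadratic forms are benign. So unless $w\in E_\kappa$ there is a descent direction in $w$.
  \item[(b)] If $g=0$ (i.e.\ $t=0$, or $y=0$, or $Cy=0$ because $C\preceq0$): take a unit $e\in E_\kappa$. Near $(0,y)$ use $(\varepsilon,\ \sqrt{1-\varepsilon^2}\,(\cos\delta\, w+\sin\delta\, e'))$, with $e'$ chosen so that $y^{\mathsf T}Cy<0$ after the rotation. Near $(\pm1,0)$ use $(\pm\sqrt{1-\varepsilon^2},\varepsilon e)$. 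In each case $g<0$ arbitrarily close, so the point is not a local minimizer.
  \end{itemize}
  Thus all local minima are global.
\end{itemize}

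\emph{Necessity.} Suppose $C$ is indefinite, so $\kappa<0$ and some $v\in\Sph^{n-2}$ has $v^{\mathsf T}Cv>0$. On a neighborhood of $(0,v)$ we still have $y^{\mathsf T}Cy>0$, hence $g\ge0=g(0,v)$. So $(0,v)$ is a local minimizer with value $\rho$. It is spurious because $f_\star=\rho+\kappa/4<\rho$, and it is nonstrict because every nearby point $(0,v')$ with $\|v'\|=1$ also has value $\rho$.

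The main obstacle is the bookkeeping in case (b) of the $C\preceq0$ argument. The chart degenerates exactly on the loci where $g=0$, so descent curves there must be written directly in $(t,y)$ coordinates rather than in $(\varphi,w)$. A second point needs care in the $C\succeq0$ case: nonstrict minima there are harmless, since every local minimum has the global value $0$.
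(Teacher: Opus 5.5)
Your proof follows essentially the same route as the paper. You reduce to the coupling term $t^2y^{\mathsf T}Cy$ through a polar parametrization; the paper uses $s=t^2$, you use $\varphi$. You then bound it via $t^2\|y\|^2\le\frac14$ and the Rayleigh quotient. You exhibit descent in the radial variable and in the direction $w$, and write explicit curves at zero-coupling points. Finally, in the indefinite case you use continuity of $u\mapsto u^{\mathsf T}Cu$ at $(0,v)$.

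\textbf{Missing subcase.} One subcase is listed in (b) but never treated. It is the case $C\preceq0$, $\kappa<0$, at a point with $t\neq0$ and $0\neq y\in\ker C$. There $g=0$, but the point is neither a pole nor equatorial, so neither of your two curves passes through it.

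The fix is immediate. Your chart is valid there, and the $w$-descent of (a) works verbatim with $c=0$. Explicitly, take $e\in E_\kappa$, which is automatically orthogonal to $w\in\ker C$. The curve $(\cos\varphi,\sin\varphi(\cos\delta\,w+\sin\delta\,e))$ then gives $g=\frac14\sin^2(2\varphi)\sin^2\delta\,\kappa<0$ for small $\delta\neq0$. The paper instead uses $y_\varepsilon=y_0+\varepsilon u$, $t_\varepsilon=\operatorname{sign}(t_0)\sqrt{t_0^2-\varepsilon^2}$.

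\textbf{Minor points.}
\begin{itemize}
\item The $(\varphi,w)$ parametrization is a smooth chart on all of $\{y\neq0\}$, including the equator $t=0$. Only the poles genuinely need separate curves, so the bookkeeping is lighter than you suggest.
\item In the $C\succeq0$ case, $\varphi$ should move toward the nearest zero of $\sin 2\varphi$. For $\varphi\in(\pi/4,\pi/2)$ that zero is $\pi/2$, not the pole. Moving toward $0$ initially increases $g$ there.
\end{itemize}
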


\begin{proof}
Set
\begin{equation}\label{eq:quartic-reduction}
 s=t^2,\qquad y=\sqrt{1-s}\,v,\qquad
 0\le s\le1,\quad\|v\|=1.
\end{equation}
On the sphere, define the coupling term \(I_C(t,y) := t^2y^TC\,y= s(1-s)v^TC\,v\),
so that \(f_{\rho,C}= \rho+ I_C(t,y)\). Next, an interior local minimum with nonzero coupling must have $s=1/2$.
Then positive coupling has negative curvature in the $s$
direction and cannot be locally minimal. For negative coupling, local minimality in 
$v$ requires a minimum of the Rayleigh quotient, hence membership in the eigenspace associated with
the lowest eigenvalue.

$\bullet$ If \(C\succeq0\), then \(I_C (t,y)\geq0\), and every point at 
which \(I_C(t,y) = 0\)--equivalently, \(t= 0\) or \(y\in\mathrm{Ker}\,C\)--is a global minimizer 
with value \(\rho\).

$\bullet$ Suppose that $C\preceq0$ and $C\ne0$, and set
\( \kappa:=\lambda_{\min}(C)<0\). Then
\(I_C(t,y)\le0\) and since \( 0\le s(1-s)\le\frac14\) and
 \(v^{\mathsf T}Cv\ge\kappa\), the smallest possible value is
\( \min I_C=\frac{\kappa}{4}\).
It is attained exactly when \( s=\frac12\), \(v\in E_\kappa\),
where recall that $E_\kappa$ is the eigenspace of $C$ associated with $\kappa$.
Equivalently, the global minimizers satisfy
\begin{equation}\label{eq:negative-global-points}
 t^2=\frac12,
 \qquad
 \|y\|^2=\frac12,
 \qquad
 y\in E_\kappa,
\end{equation}
and the global value $\rho+\frac{\kappa}{4}<\rho$.

In addition, every zero-coupling point has value \(\rho\), strictly larger 
than \(f_\star=\rho+\kappa/4\), and more importantly, no such point is a local minimum. 
Indeed, let \(u\) be a unit \(\kappa\)-eigenvector. At either pole, the perturbation \(y=\varepsilon u\) provides with
a coupling \((1-\varepsilon^2)\varepsilon^2\kappa<0\). At a point \((t_0,y_0)\) with \(t_0\neq0\) and \(y_0\in\ker C\), the spherical perturbation
\[
y_\varepsilon=y_0+\varepsilon u,\qquad
t_\varepsilon=\operatorname{sign}(t_0)\sqrt{t_0^2-\varepsilon^2}
\]
provides with a coupling \((t_0^2-\varepsilon^2)\varepsilon^2\kappa<0\).

Finally, consider equatorial points: Case 1: \(v^{\mathsf T}Cv<0\)
then define the spherical curve
\(t(\theta)=\sin\theta\), \(y(\theta)=\cos\theta\,v\).
It satisfies \(t(\theta)^2+\|y(\theta)\|^2=\sin^2\theta+\cos^2\theta=1$,
and \((t(\theta),y(\theta))\longrightarrow(0,v)\) as \(\theta\to0\).
Along this curve, the coupling term is
\[\mathcal I_C(t(\theta),y(\theta))\,=\,
t(\theta)^2y(\theta)^{\mathsf T}Cy(\theta)\,=\,
\sin(\theta)^2\cos(\theta)^2\,v^{\mathsf T}Cv\,.\]
As \(v^{\mathsf T}Cv<0\), it follows that \(I_C(t(\theta),y(\theta))<0\)
for sufficiently small nonzero \(\theta\). Therefore
\(f_{\rho,C}(t(\theta),y(\theta))<\rho=f_{\rho,C}(0,v)\), and so \((0,v)\) is not a local minimum.

Case 2: \(v^{\mathsf T}Cv=0\) and so  \(v\in\ker C\).
Since \(C\neq0\) and \(C\preceq0\), there exists a unit eigenvector \(u\) with
\(Cu=\kappa u\), \(\kappa=\lambda_{\min}(C)<0\).
Because \(v\in\ker C\) and \(C\) is symmetric,
\(u^{\mathsf T}v=0\). 
Rotate \(v\) by
\(v(\theta)=\cos\theta\,v+\sin\theta\,u\).
Then \(\|v(\theta)\|=1\) and \(v(\theta)\longrightarrow v\) as \(\theta\to0\).
Moreover,
\[v(\theta)^{\mathsf T}Cv(\theta)\,=\,
\cos(\theta)^2\,v^{\mathsf T}Cv
+2\sin\theta\cos\theta\,v^{\mathsf T}Cu
+\sin(\theta)^2\,u^{\mathsf T}Cu\,=\,
\kappa\sin^2\theta<0\]
for every nonzero sufficiently small \(\theta\).
Now define the spherical curve
\(t(\theta)=\sin\theta\), \(y(\theta)=\cos\theta\,v(\theta)\).
Again,
\(t(\theta)^2+\|y(\theta)\|^2=1\) and
\((t(\theta),y(\theta))\longrightarrow(0,v)\).
Along this curve,
\[\mathcal I_C(t(\theta),y(\theta))\,=\,\sin(\theta)^2\cos(\theta)^2\,
v(\theta)^{\mathsf T}Cv(\theta)\,=\,\kappa\sin(\theta)^4\cos(\theta)^2\,<0.\]
Hence
\(f_{\rho,C}(t(\theta),y(\theta))=\rho+\kappa\sin(\theta)^4\cos(\theta)^2\,<\rho\).
Thus an equatorial kernel point is not a local minimum either. 

$\bullet$ Indefinite $C$.
Suppose that \(\|v\|=1\) and
\(v^{\mathsf T}Cv>0\).
At the  equatorial point
\((0,v)\), \(I_C(0,v)=0\) and \(f_{\rho,C}(0,v)=\rho\).
Because the map
\(u\longmapsto u^{\mathsf T}Cu\) is continuous and is strictly positive at \(v\), 
there is a neighborhood \(U\) of \(v\) on the unit sphere such that
\(u^{\mathsf T}Cu>0\) for every \(u\in U\).
For a point near \((0,v)\), write
\(t^2=s\) and \(y=\sqrt{1-s}\,u\),
where \(s\ge0\) is small and \(u\in U\). Then
\(I_C(t,y)=s(1-s)u^{\mathsf T}Cu \ge0\).
Therefore
\(f_{\rho,C}(t,y)\ge\rho\,=\,f_{\rho,C}(0,v)\)
throughout a neighborhood of \((0,v)\). Hence \((0,v)\) is a nonstrict local minimum.
But because \(C\) is indefinite,
\(\kappa=\lambda_{\min}(C)<0\). Choosing
\(s=\frac12\) and \(u\in E_\kappa\) yields
\(f_{\rho,C}=\rho+\frac{\kappa}{4}<\rho\) and so
 the equatorial local minimum \((0,v)\) is spurious.
\end{proof}

\begin{example}[Positive benign and indefinite non-benign quartics]
On $\Sph^2$, the positive definite quartic
\begin{equation}\label{eq:quartic-positive-example}
 2(t^2+y_1^2+y_2^2)^2-t^2(4y_1^2+y_2^2)
\end{equation}
has minimum value $1$ and exactly four strict global minima
$t=\pm1/\sqrt2$, $y_1=\pm1/\sqrt2$, $y_2=0$.  Replacing the second term by
$t^2(-4y_1^2+y_2^2)$ makes $C$ indefinite: the global value remains $1$, but
$(0,0,\pm1)$ are nonstrict spurious minima of value $2$.
\end{example}

\begin{remark}[Robustness inside the family \eqref{eq:quartic-family}]
\label{rem:quartic-robust}
If \(C\succ0\), then every sufficiently small symmetric-matrix perturbation \(\widetilde C\) still satisfies
\(\widetilde C\succ0\).
Similarly, if \(C\prec0\), every sufficiently small matrix perturbation remains negative definite. 
Since Theorem \ref{thm:quartic-family} states that every semidefinite \(C\) gives a benign landscape, definite matrices are interior points of the benign set relative to the parameter space \((\rho,C)\).
Thus there is structured robustness (i.e. robustness within the family):
\[
C\succ0\ \text{or}\ C\prec0
\quad\Longrightarrow\quad
f_{\widetilde\rho,\widetilde C}
\text{ remains benign for nearby }(\widetilde\rho,\widetilde C).
\]
\end{remark}

\subsection{A robustly benign quartic}
While the benign quartics of the family \eqref{eq:quartic-family} may be robust
relative to the parameter space $(\rho,C)$ (see Remark \ref{rem:quartic-robust})
they are not necessarily robust to perturbation of their coeffcients in $\mathcal H_{n,4}$.
Nevertheless such robustly benign quartics do exist.

\begin{proposition}[A full coefficient neighborhood of benign quartics]
\label{prop:robust-quartic-anchor}
Let $a>0$ and
\begin{equation}\label{eq:robust-quartic-anchor}
 x\mapsto Q_a(x)\,:=-a x_1^2\|x\|^2.
\end{equation}
There exists $\delta_a>0$ such that every quartic form $\widetilde Q$ with
\begin{equation}\label{eq:robust-quartic-ball}
 \|\widetilde Q-Q_a\|_{C^2(\Sph^{n-1})}<\delta_a
\end{equation}
has no spurious spherical local minimum.  Its global-minimum set consists of
one antipodal pair near $\{\pm e_1\}$, and these minima are nondegenerate.
The perturbation is arbitrary in the full quartic coefficient space and need
not retain the axial form \eqref{eq:robust-quartic-anchor}.
\end{proposition}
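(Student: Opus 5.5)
The plan is to run the even-degree analogue of the proof of Theorem \ref{thm:cubic-robustness}, with the antipodal symmetry handled explicitly.

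\emph{Step 1: the landscape of $Q_a$.} On the sphere, $Q_a(x)=-a x_1^2$, a quadratic form restricted to $\Sph^{n-1}$. Its spherical critical points are the unit eigenvectors of $\mathrm{diag}(-a,0,\dots,0)$: the pair $\pm e_1$ with value $-a$, and the equator $\{x_1=0\}$ with value $0$.

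At $\pm e_1$, the intrinsic Hessian is obtained directly, or via \eqref{eq:intrinsic} with $d=4$. One has $\nabla^2Q_a(e_1)=\mathrm{diag}(-12a,-4a,\dots,-4a)$, whose radial eigenvalue is $12f=-12a$. Subtracting $4f=-4a$ on $e_1^\perp$ gives
\[
\nabla^2_SQ_a(\pm e_1)=-4a\,I+4a\,I=0 .
\]
This looks degenerate, so the bookkeeping must be redone with the spherical restriction $g(x)=-ax_1^2$. Near $e_1$, write $x_1=\sqrt{1-\|w\|^2}$, so that $g=-a+a\|w\|^2$ and the Hessian is $2aI\succ0$. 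Hence $\pm e_1$ are nondegenerate strict minima. (The apparent discrepancy comes from using $\nabla^2f$ of the form $-ax_1^2\|x\|^2$, whose $e_1^\perp$ block is $-2a$, not $-4a$. The correct value is $-2a+4a=2a$.)

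On the equator, $\nabla^2_S$ along the $e_1$ direction equals $-2a<0$. So every equatorial point is a strict saddle: its intrinsic Hessian has a uniformly negative eigenvalue.

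\emph{Step 2: exclusion away from the poles.} Let $U_\pm$ be small geodesic balls around $\pm e_1$ on which $\nabla^2_S\widetilde Q\succeq\mu I$ for all sufficiently small $C^2$ perturbations; the same $\mu$ works by continuity. Set $K=\Sph^{n-1}\setminus(U_+\cup U_-)$. On $K$ the function
\[
\Phi(x)=\max\bigl\{\|\nabla_SQ_a(x)\|,\;-\lamin(\nabla^2_SQ_a(x))\bigr\}
\]
is positive, since every critical point in $K$ is equatorial with negative curvature $-2a$. By compactness, $\Phi\ge\eta>0$ on $K$. Exactly as in Step 3 of Theorem \ref{thm:cubic-robustness}, using Weyl's inequality, a $C^2$ perturbation of size $<\eta/3$ has no SONC point, hence no local minimizer, in $K$.

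\emph{Step 3: the poles and the antipodal symmetry.} On each $U_\pm$, strong geodesic convexity together with the implicit function theorem (Step 2 of Theorem \ref{thm:cubic-robustness}) gives exactly one critical point $x_\pm(\widetilde Q)$, and it is a nondegenerate minimizer. The anticipated obstacle is that in even degree two minima survive, so benignity must come from equality of values rather than uniqueness. This is handled by evenness: $\widetilde Q(-x)=\widetilde Q(x)$, and $U_-=-U_+$. Therefore $-x_+(\widetilde Q)$ is a critical point in $U_-$, and by uniqueness it equals $x_-(\widetilde Q)$. The two local minima thus form one antipodal pair with equal value.

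A global minimizer exists and is a local minimizer, so it lies in this pair. Hence both points are global, no spurious minimum exists, and the minima are nondegenerate.

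\emph{Step 4: passing to coefficients.} The $C^2(\Sph^{n-1})$ hypothesis \eqref{eq:robust-quartic-ball} is used directly. Equivalence with coefficient norms in the finite-dimensional space $\cH_{n,4}$, as in Step 4 of Theorem \ref{thm:lifting}, gives the full-coefficient statement. The main care point is Step 1, namely correctly computing the tangential curvature $2a$ at the poles and $-2a$ on the equator. Everything else is a transcription of the odd-degree robustness proof.
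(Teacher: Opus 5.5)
Your proposal is correct and follows essentially the same route as the paper. Both arguments use nondegenerate minima at $\pm e_1$ continued by the implicit function theorem, a uniform gradient/negative-curvature margin away from the poles that rules out SONC points under small $C^2$ perturbations, and evenness forcing the two continued minima to be an antipodal pair of equal, hence global, value.

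The only blemish is the self-corrected slip in Step~1. In fact $\nabla^2Q_a(e_1)=\mathrm{diag}(-12a,-2a,\dots,-2a)$ from the start, so $\nabla^2_SQ_a(\pm e_1)=2a\,I$ directly, and the erroneous $-4a$ line (with its ``degenerate'' intermediate conclusion) should simply be deleted.
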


\begin{proof}
On the sphere, $Q_a(x)=-a x_1^2$.  Its only minima are the nondegenerate
antipodal pair $\{\pm e_1\}$.  Its equator $x_1=0$ is a critical manifold,
but in the normal tangent direction its spherical Hessian has eigenvalue
$-2a$; hence every equatorial point has a uniform direction of negative
curvature.  Between fixed small neighborhoods of the poles and the equator,
the spherical gradient has a positive minimum.  These three strict margins
persist under a sufficiently small $C^2$ perturbation.  The implicit function
theorem continues one nondegenerate local minimum near each pole, no point
near the equator can satisfy SONC, and no other point can be critical.
Every quartic is even, so the two continued minima are antipodal and have the
same value.  They are therefore both global and no spurious local minimum
exists.
\end{proof}

As in the cubic case, equivalence of norms on the finite-dimensional quartic
coefficient space converts \eqref{eq:robust-quartic-ball} into a full
coefficient ball.  A generic member of this ball contains monomials that are absent
in \eqref{eq:robust-quartic-anchor}; it lies outside the axial family while
remaining benign.  Robustness therefore enlarges a tractable structured
model into an open ambient class.

\section{Conclusion}

Generically, the quite simple Hessian's spectral characterization of SONC points in \cite{LasserreOL2022}
becomes that of local minima (and minimizers) thanks to Huang's result \cite{Huang2023} on genericity.  
However, it cannot, alone, distinguish a global minimum from a spurious one.
One has first provided several results that supply the missing global comparison, and
used them to prove that some families of cubic and quartic forms are benign
and even robustly benign to small perturbations on the full
space of their coefficients,
 even when the resulting form (after perturbation) leaves the family.

Moreover, and interestingly,  under dimension lifting, 
benign cubic families grow deterministically and contain an open set
(in the space of the coefficients)
at every dimension. Therefore it would be quite interesting to determine what is the relative
proportion of benign cubic forms to all cubic forms, and similarly for quartic forms.
This  is a topic of further investigation beyond the scope of the present paper.

\subsection*{Acknowledgement}
The author acknowledges the use of OpenAI ChatGPT to assist
with brainstorming and drafting, and  is solely responsible for the final content and conclusions.

\section{Appendix}

\subsection{Proof of Theorem \ref{thm:lifting}}
\label{proof-thm-lifting}
\begin{proof}
We separate the proof into four steps:

\medskip
\noindent
$\bullet$ Step 1: the zero-extension preserves the benign property.
Let $f\in\cG_{n,d}$ and set $F:=\iota_nf$.  A nonpolar point of
$\Sph^n\subset\R^{n+1}$ can be written as
\[
 (x,z)=(ru,z),
 \qquad
 u\in\Sph^{n-1},\quad 0<r\le1,\quad r^2+z^2=1\,,
\]
and homogeneity yields
\begin{equation}\label{eq:lifted-value}
 F(ru,z)=f(ru)=r^df(u).
\end{equation}
If $f\equiv 0$ then so is $F$ and the conclusion is trivial. 
Next, let $f\neq0$;  then its global minimum is strictly negative ($f_\star<0$). We examine three cases.

1. No local minimum with \(0<r<1\)
Suppose \((ru,z)\) were a local minimum. If \(f(u)>0\) then  decreasing \(r\) decreases \(r^df(u)\).
If \(f(u)<0\) then increasing \(r\) decreases \(r^df(u)\).
If \(f(u)=0\), varying $r$ provides no information, but varying \(u\) shows that \(u\) must be a local minimizer of \(f\). 
But since \(f\) is benign, this would imply $f(u)=f_\star<0$, in contradiction with \(f(u)=0\).
Hence no point with \(0<r<1\) can be a local minimum of \(F\).

2. A local minimum on the equator is global.
Let \((u,0)\), corresponding to \(r=1\), be a local minimum of \(F\). Variations along the equator show that \(u\) is a local minimum of \(f\). Since \(f\) is benign, $f(u)=f_\star<0$. For every \(v\in\mathbb S^{n-1}\) and \(0\le r\le1\),
$r^df(v)\ge f_\star$. Indeed: if \(f(v)\ge0\), then \(r^df(v)\ge0>f_\star\); if \(f(v)<0\), then $r^df(v)\ge f(v)\ge f_\star$,
because \(0\le r^d\le1\). Thus $F(ru,z)=r^df(v)\ge f_\star=F(u,0)$ and so \((u,0)\) is a global minimum of \(F\).

3. The poles are not local minima. At either pole \(r=0\), $F=0$.
Let \(u_\star\) be a global minimizer of \(f\). Since \(f_\star<0\), points arbitrarily close to either pole with 
small \(r>0\) and direction \(u_\star\) satisfy $F(ru_\star,z)=r^df_\star<0$.
Hence neither pole is a local minimum. 

Therefore every local minimum of the lifted form lies on the equator and comes 
from a global  minimum of \(f\). Hence all local minima are global, i.e., $\iota_n(\mathcal G_{n,d})
\subseteq
\mathcal G_{n+1,d}$.

%
%
%
\medskip
\noindent
$\bullet$ Step 2: the inclusion is strict.
As $d$ is odd, the form \(G(x,z):=z\,\|(x,z)\|^{d-1}\)
is a polynomial because $d-1$ is even.  On $\Sph^n$ it is simply the height
$z$, and hence it has only one local minimum, the global minimum at the south
pole.  Thus $G\in\cG_{n+1,d}$, whereas $G$ is not a zero-extension. This proves strictness in \eqref{eq:lifting}.

\medskip
\noindent
$\bullet$ Step 3: the odd-degree interior point. This the most technical part.
Let $d=2p+1$ and set
\begin{equation}\label{eq:odd-anchor}
 f_0(x):=x_1\|x\|^{2p}.
\end{equation}
On the sphere, $f_0(x)=s(x):=x_1$, and therefore:
\begin{equation}\label{eq:height-identities}
 \nabla_Sf_0(x)\,=\,\nabla_S s(x)=e_1-sx,
 \qquad
 \nabla^2_Sf_0(x)\,=\,\nabla^2_S s(x)=-sI_{x^\perp}.
\end{equation}
Thus the only critical points are $e_1$ and $-e_1$,
\[
 \nabla^2_ Sf_0(e_1)=-I_{e_1^\perp},
 \qquad
 \nabla^2_S f_0(-e_1)=I_{e_1^\perp}.
\]
They are respectively a nondegenerate maximum and a nondegenerate minimum.

Choose disjoint coordinate neighborhoods $U_+$ and $U_-$ of the poles and
smaller neighborhoods $V_+\Subset U_+$ and $V_-\Subset U_-$.  Applied in
local charts on $\Sph^{n-1}$, the parameter-dependent implicit-function
theorem gives a $C^2$-neighborhood $\mathcal U_0$ of $f_0$ such that every
$f\in\mathcal U_0$ has exactly one critical point in each $V_\pm$, and these
critical points remain respectively a strict maximum and a strict minimum.
To justify the word ``exactly'' on the whole of $U_\pm$, observe that the
compact annuli $\overline{U_\pm\setminus V_\pm}$ contain no critical point of
$f_0$.  Hence $\|\nabla_S f_0\|$ has a positive minimum there, and after
shrinking $\mathcal U_0$ no critical point of $f$ can occur in those annuli.

Finally, the set \( K:=\Sph^{n-1}\setminus(U_+\cup U_-)\)
is compact and contains no critical point of $f_0$.  Therefore
\[
 \eta:=\min_{x\in K}\|\nabla_S f_0(x)\|>0.
\]
If $\|\nabla_S(f-f_0)\|_{C^0}<\eta/2$, no critical point of $f$ lies in $K$.
Consequently every sufficiently small $C^2$ perturbation of $f_0$ has exactly
one local minimum, which is necessarily global.  Hence
$f_0\in\cG_{n,d}$.

\medskip
\noindent
$\bullet$ Step 4: From the $C^2$ topology to coefficients.
Write
\[
 q(x)=\sum_{|\alpha|=d}q_\alpha x^\alpha,
 \qquad
 \|q\|_{\mathrm{coef}}
 :=\left(\sum_{|\alpha|=d}q_\alpha^2\right)^{1/2}\,,
\]
and, for a restriction to the sphere, define
\begin{equation}\label{eq:spherical-C2-norm}
 \|q\|_{C^2(\Sph^{n-1})}
 :=
 \max_{x\in\Sph^{n-1}}
 \max\bigl\{
  |q(x)|,\ \|\nabla_S q(x)\|,\
  \|\nabla^2_S q(x)\|_{\rm op}
 \bigr\}.
\end{equation}

For every multi-index $\beta$ with $|\beta|\le2$, let $(\alpha)_\beta:=\prod_{i=1}^n
\frac{\alpha_i!}{(\alpha_i-\beta_i)!}$ where $(\alpha)_\beta=0$ unless $\beta\le\alpha$ componentwise.  Then:
\[
 \partial^\beta q(x)
 =\sum_{|\alpha|=d}(\alpha)_\beta q_\alpha x^{\alpha-\beta}\,,
\]
and since
$|x_i|\le1$ on the sphere, by Cauchy--Schwarz, there exist constants
$C_{n,d,\beta}<\infty$ such that
\[
 \sup_{x\in\Sph^{n-1}}|\partial^\beta q(x)|
 \le C_{n,d,\beta}\|q\|_{\mathrm{coef}}\,,
\]
so  that $\|q\|_{C^2(\Sph^{n-1})}
 \le C_{n,d}\|q\|_{\mathrm{coef}}$ for some finite $C_{n,d}$.  
Therefore each $C^2$ neighborhood obtained in
Steps~3--4 contains a Euclidean coefficient ball in $\cH_{n,d}$.  Hence
$\cG_{n,d}$ has nonempty coefficient-space interior.  Any subset of a
finite-dimensional vector space containing an open ball has the full ambient
parameter dimension $\binom{n+d-1}{d}$.
\end{proof}

\subsection{Proof of Theorem \ref{thm:axial}}
\label{proof-thm-axial}
\begin{proof}
One proceeds in 6 steps.

\medskip
\noindent
$\bullet$ Step 1: diagonalization and the two pole Hessians.
An orthogonal change of the \(y\)-coordinates which diagonalizes \(B\), 
does not change the sphere and the local-minimum problem.  Let \(v\) be an arbitrary  unit
eigenvector of \(B\), with \(Bv=bv\).  Near the north pole $p_+$, use the spherical curve
\[ t=\sqrt{1-r^2},  \qquad  y\,=rv\,.\]
Because \(y^{\mathsf T}By=br^2\),
\begin{align}
 f\bigl(\sqrt{1-r^2},rv\bigr)
 &=
 \sqrt{1-r^2}\bigl(\alpha(1-r^2)+br^2\bigr) \notag\\
 &=
 \sqrt{1-r^2}\bigl(\alpha+(b-\alpha)r^2\bigr).
 \label{eq:north-exact}
\end{align}
The expansion
\[
 \sqrt{1-r^2}
 =
 1-\frac{r^2}{2}-\frac{r^4}{8}+O(r^6)
\]
yields
\begin{equation}\label{eq:north-expansion}
 f\bigl(\sqrt{1-r^2},rv\bigr)
 =
 \alpha
 +\left(b-\frac{3\alpha}{2}\right)r^2
 +\left(\frac{3\alpha}{8}-\frac b2\right)r^4
 +O(r^6).
\end{equation}
Hence in the tangent direction \(v\), the eigenvalue of $\nabla^2_Sf(p_+)$ (the intrinsic Hessian at \(p_+\))
is \( 2b-3\alpha\).
Since this holds in every eigendirection $v$ of \(B\), one obtains
\begin{equation}\label{eq:north-hessian}
 \nabla^2_S f(p_+)\,=2B-3\alpha I.
\end{equation}
At the south pole $p_-$, with similar arguments (now using \(t=-\sqrt{1-r^2}\) and
$ y=rv$), one obtains
\begin{equation}\label{eq:south-hessian}
 \nabla^2_S f(p_-)=3\alpha I-2B.
\end{equation}

It follows immediately that \(p_+\) is a strict nondegenerate local minimum
when \(\lmin(B)>3\alpha/2\), and that it is not a local minimum when
\(\lmin(B)<3\alpha/2\).  Similarly, \(p_-\) is a strict nondegenerate local
minimum when \(\lmax(B)<3\alpha/2\).

\medskip
\noindent
$\bullet$ Step 2: the equality cases at the poles.
Suppose that \(b=3\alpha/2\).  In the associated eigendirection,
\eqref{eq:north-expansion} becomes
\begin{equation}\label{eq:north-equality}
 f\bigl(\sqrt{1-r^2},rv\bigr)
 =
 \alpha-\frac{3\alpha}{8}r^4+O(r^6).
\end{equation}
Hence with every sufficiently small nonzero \(r\), the resulting value is strictly
smaller than \(f(p_+)=\alpha\) and so $p_+$ is not a local
minimum at equality (its intrinsic Hessian is only positive semidefinite).
Similarly, 
At the south pole, and with the same eigendirection,
\begin{equation}\label{eq:south-equality}
 f\bigl(-\sqrt{1-r^2},rv\bigr) = -\alpha+\frac{3\alpha}{8}r^4+O(r^6)\,,
\end{equation}
and a similar result holds.
The global argument in Step~5 below shows that \(p_-\) is a global minimum
whenever \(\lmax(B)\le3\alpha/2\), including equality.

\medskip
\noindent
$\bullet$ Step 3: equations for a nonpolar critical point.
The derivatives of \(f\) read:
\begin{equation}\label{eq:ambient-derivatives}
 \frac{\partial f}{\partial t}
 =
 3\alpha t^2+y^{\mathsf T}By,
 \qquad
 \nabla_y f=2tBy.
\end{equation}
The Lagrange multiplier equations for the sphere are
\begin{equation}\label{eq:lagrange}
 3\alpha t^2+y^{\mathsf T}By=\lambda t,
 \qquad
 2tBy=\lambda y,
 \qquad
 t^2+\|y\|^2=1
\end{equation}
for a nonpolar point, \(y\ne0\).  Moreover, \(t\ne0\) otherwise the second
equation in \eqref{eq:lagrange} gives \(\lambda=0\), whereas the first yields 
\(y^{\mathsf T}By=0\), in contradiction with \(B\succ0\).

Next, by \eqref{eq:lagrange}, \(By=\frac{\lambda}{2t}y\), i.e.,
\(y\) belongs to some eigenspace \(E_b\) of \(B\).  Writing
\(By=by\), we obtain
\[
 \lambda=2bt,
 \qquad
 y^{\mathsf T}By=b\|y\|^2=b(1-t^2).
\]
The first equation in \eqref{eq:lagrange} becomes \(3\alpha t^2+b(1-t^2)=2bt^2\),
or equivalently, \(b=3(b-\alpha)t^2\).
Therefore every nonpolar critical point satisfies
\begin{equation}\label{eq:critical-t}
 \boxed{\displaystyle
 t^2=\frac{b}{3(b-\alpha)}.}
\end{equation}
Recall that $B\succ0$ and so $b>0$. Hence the requirement \(0<t^2<1\) is equivalent to
\( b>\frac{3\alpha}{2}\).
Indeed, positivity in \eqref{eq:critical-t} requires \(b>\alpha\), and
\(t^2<1\) is equivalent to \(b<3(b-\alpha)\), i.e.,
\(2b>3\alpha\).  Finally, at \(b=3\alpha/2\), by
\eqref{eq:critical-t}, \(t^2=1\), and so hence \(y=0\), which is a pole,
not a nonpolar point.

\medskip
\noindent
$\bullet$ Step 4: second-order classification of nonpolar critical points.
Fix a unit vector \(v\in E_b\) and restrict \(f\) to 
\( y=\sqrt{1-t^2}\,v\).
The resulting one-variable function is
\begin{equation}\label{eq:meridian}
 t\mapsto \phi_b(t)\, =
 t\bigl(\alpha t^2+b(1-t^2)\bigr)
 =
 bt+(\alpha-b)t^3.
\end{equation}
Its first two derivatives are
\begin{equation}\label{eq:meridian-derivatives}
 \phi_b'(t)=b+3(\alpha-b)t^2,
 \qquad
 \phi_b''(t)=6(\alpha-b)t.
\end{equation}
At a nonpolar critical point \(b>3\alpha/2>\alpha\).  Hence
\(\alpha-b<0\), and by \eqref{eq:meridian-derivatives}
\[
 \phi_b''(t)<0\quad\text{when }t>0,
 \qquad
 \phi_b''(t)>0\quad\text{when }t<0.
\]
Thus with $t>0$, a nonpolar critical point has a direction of negative
curvature and cannot be a local minimum.  

So one need consider
the \(t<0\) branch. 
Let \(w\perp y\) be an eigenvector
of \(B\) with eigenvalue \(c\).  Then \((0,w)\) is tangent to the sphere at
\((t,y)\).  Since the Lagrange multiplier is \(\lambda=2bt\), 
\begin{equation}\label{eq:transverse-curvature}
 (0,w)^{\mathsf T}
 \bigl(\nabla^2f-\lambda I\bigr)(0,w)
 =
 2t(c-b)\|w\|^2.
\end{equation}
For \(t<0\), nonnegativity of \eqref{eq:transverse-curvature} for every
eigenvalue \(c\) requires \(c\le b\).  Therefore a \(t<0\)
nonpolar critical point can satisfy SONC only if
\begin{equation}\label{eq:top-eigenvalue}
 b=\beta:=\lmax(B).
\end{equation}
If \(\beta\) has multiplicity larger than one, the zero-curvature directions
within \(E_\beta\) are tangent to the resulting manifold of minimizers.

\medskip
\noindent
$\bullet$ Step 5: identification of the global minimizers.
For \(t\ge0\), the assumptions \(\alpha>0\) and \(B\succ0\) imply
\(f(t,y)\ge0\), whereas \(f(-1,0)=-\alpha<0\).
Hence every global minimizer must have \(t<0\).

For fixed \(t<0\), minimizing \(f(t,y)\) means maximizing
\(y^{\mathsf T}By\), because multiplication by \(t<0\) reverses the
inequality.  Under \(\|y\|^2=1-t^2\), and with $\beta=\lmax(B)$,
\( y^{\mathsf T}By\le\beta(1-t^2)\)
with equality precisely when \(y\in E_\beta\).  Hence the global minimum problem
reduces to
\begin{equation}\label{eq:scalar-global}
 \min_{-1\le t\le0}
 \psi_\beta(t),
 \qquad
 \psi_\beta(t):=\beta t+(\alpha-\beta)t^3.
\end{equation}
Observe that \( \psi_\beta'(t) = \beta-3(\beta-\alpha)t^2\), and
\[\psi_\beta'(t)=
\begin{cases}
\beta+3(\alpha-\beta)t^2\ge\beta>0,
&0<\beta\le\alpha,\\[2mm]
\beta-3(\beta-\alpha)t^2\ge3\alpha-2\beta\ge0,
&\alpha<\beta\le3\alpha/2\,,
\end{cases}
\]
which  proves the required monotonicity on \([-1,0]\).
Thus, when \(0<\beta\le3\alpha/2\), \(\psi_\beta\) is nondecreasing on \([-1,0]\), and its minimum is attained at \(t=-1\).
If \(\beta>3\alpha/2\), the unique minimizer of
\eqref{eq:scalar-global} is
\[
 t=-\sqrt{\frac{\beta}{3(\beta-\alpha)}}.
\]
Equality in the Rayleigh--Ritz bound requires \(y\in E_\beta\), and as
\(\|y\|^2=1-t^2\), \eqref{eq:axial-minimizers} holds.

\medskip
\noindent
$\bullet$ Step 6: benign character.
By Steps~3--5, every nonpolar local minimum is global.  The south pole is
also global whenever it is a local minimum.  Thus the north pole $p_+$ is the only
possible spurious local minimizer. 
By Steps~1 and~2, $p_+$ is a local minimizer exactly when
\( \lmin(B)>\frac{3\alpha}{2}\).
It is then necessarily spurious because \(f(1,0)=\alpha>-\alpha=f(-1,0)\).
Consequently, no spurious local minimum exists exactly when
\( \lmin(B)\le\frac{3\alpha}{2}\), which proves \eqref{eq:axial-threshold}.
\end{proof}


\begin{thebibliography}{99}

\bibitem{AhmadiEtAl2013}
A.~A. Ahmadi, A.~Olshevsky, P.~A. Parrilo, and J.~N. Tsitsiklis,
\newblock NP-hardness of deciding convexity of quartic polynomials and related
problems,
\newblock \emph{Math. Program.} \textbf{137} (2013), no.~1--2, 453--476.
\newblock \url{https://doi.org/10.1007/s10107-011-0499-2}.



\bibitem{deKlerk2008Survey}
E. de Klerk, 
\newblock The Complexity of Optimizing over a Simplex, Hypercube or Sphere: A Short Survey,
\newblock Central Eur. J. Oper. Res. \textbf{16}, pp. 111--125, 2008.

\bibitem{Faybusovich2004}
L. Faybusovich, 
\newblock Global Optimization of Homogeneous Polynomials on the Simplex and on the Sphere,
\newblock  in \emph{Frontiers in Global Optimization}, 2004, pp. 109--121. 
\bibitem{RongGe2015}
R. Ge, F. Huang, Chi Jin, Yang Yuan, 
\newblock Escaping From Saddle Points -- Online Stochastic Gradient for Tensor Decomposition,
\newblock \emph{COLT 2015}, \emph{PMLR} \textbf{40}, pp. 797--842. 

\bibitem{GeJinZheng2017}
R.~Ge, C.~Jin, and Y.~Zheng,
\newblock No spurious local minima in nonconvex low rank problems: A unified
geometric analysis,
\newblock in \emph{Proceedings of the 34th International Conference on Machine
Learning}, PMLR \textbf{70} (2017), 1233--1242.
\newblock \url{https://proceedings.mlr.press/v70/ge17a.html}.

\bibitem{HillarLim2013}
C.~J. Hillar and L.-H. Lim,
\newblock Most tensor problems are NP-hard,
\newblock \emph{J. ACM} \textbf{60} (2013), no.~6, Article 45.
\newblock \url{https://doi.org/10.1145/2512329}.

\bibitem{Huang2023}
L.~Huang,
\newblock Optimality conditions for homogeneous polynomial optimization on
the unit sphere,
\newblock \emph{Optim. Lett.} \textbf{17} (2023), no.~5, 1263--1270.
\newblock \url{https://doi.org/10.1007/s11590-022-01940-3}.

\bibitem{KoldaMayo2011}
 T. G. Kolda, J. R. Mayo, 
 \newblock Shifted power method for computing tensor eigenpairs, 
 \newblock \emph{SIAM J. Matrix Anal. Appl.} \textbf{ 32}(4),
 pp. 1095--1124, 2011
\bibitem{Lasserrebook}
J.-B.~Lasserre,
\newblock \emph{Moments, {P}ositive {P}olynomials and {T}heir {A}pplications},
\newblock Imperial College Press, 2009.
\bibitem{LasserreOL2022}
J.-B.~Lasserre,
\newblock Homogeneous polynomials and spurious local minima on the unit sphere,
\newblock \emph{Optim. Lett.} \textbf{16} (2022), no.~4, 1105--1118.

\bibitem{LasserreSIAM2022}
J.-B.~Lasserre,
\newblock Optimization on the Euclidean unit sphere,
\newblock \emph{SIAM J. Optim.} \textbf{32} (2022), no.~2, 1430--1445.

\bibitem{LLWY2014}
Zhening Li, Chen Ling, Yiju Wang, Qingzhi Yang, 
\newblock Some advances in tensor analysis and polynomial optimization,
\newblock \emph{Oper. Res. Trans.} \textbf{18}(1), pp. 134--148, 2014

\bibitem{Lim2005}
L.-H. Lim, 
\newblock Singular values and eigenvalues of tensors: A variational approach, 
\newblock in Proceedings of the IEEE International 
Workshop on Computational Advances in Multi-Sensor Adaptive
Processing (CAMSAP), 2005, pp. 129--132. 

\bibitem{MotzkinStraus1965}
T.~S. Motzkin and E.~G. Straus,
\newblock Maxima for graphs and a new proof of a theorem of Tur\'an,
\newblock \emph{Canad. J. Math.} \textbf{17} (1965), 533--540.

\bibitem{MurtyKabadi1987}
K.~G. Murty and S.~N. Kabadi,
\newblock Some NP-complete problems in quadratic and nonlinear programming,
\newblock \emph{Math. Program.} \textbf{39} (1987), no.~2, 117--129.

\bibitem{NieWang2014}
J.  Nie,  Li Wang, 
\newblock Semidefinite Relaxations for Best Rank-1 Tensor Approximations,
\newblock \emph{SIAM J. Matrix Anal. Appl.} \textbf{35} (2014), pp. 1155--1179.


\bibitem{Nie2023}
J. Nie,
\newblock \emph{Moment and Polynomial Optimization}
\newblock SIAM, Philadelphia, 2023
\bibitem{Qi2005}
L. Qi, 
\newblock Eigenvalues of a real supersymmetric tensor, 
\newblock  \emph{J. Symbolic Comput.} \textbf{40}(6), pp.
1302--1324, 2005

\bibitem{Schmudgen2017}
K. Schm\"udgen,
\newblock \emph{The Moment Problem},
\newblock Graduate texts in Mathematics,
\newblock Springer Cham, 2017.

\bibitem{Subag2017}
E.~Subag,
\newblock The complexity of spherical $p$-spin models---a second moment
approach,
\newblock \emph{Ann. Probab.} \textbf{45}(5), pp. 3385--3450, 2017.

\bibitem{Tkachev2010}
V.~G. Tkachev,
\newblock A generalization of Cartan's theorem on isoparametric cubics,
\newblock \emph{Proc. Amer. Math. Soc.} \textbf{138} (2010), no.~8,
2889--2895.
\newblock \url{https://doi.org/10.1090/S0002-9939-10-10385-2}.
\end{thebibliography}
\end{document}